	\newcommand{\ftn}[3]{ #1 \colon #2 \rightarrow #3 }
		\newcommand{\setof}[2]{\ensuremath{\left\{ #1 \: : \: #2 \right\}}}
		\newcommand{\norm}[1]{ \left\| #1 \right\| }
	\newcommand{\kk}{\ensuremath{\mathit{KK}}\xspace}
	\newcommand{\coker}{\ensuremath{\operatorname{coker}}}
	\newcommand{\Hom}{\ensuremath{\operatorname{Hom}}}
	\newcommand{\id}{\ensuremath{\operatorname{id}}}
	\newcommand{\multialg}[1]{\mathcal{M}(#1)\xspace}
	\newcommand{\corona}[1]{\mathcal{Q}(#1)\xspace}
	\newcommand{\Z}{\ensuremath{\mathbb{Z}}\xspace}
	\newcommand{\C}{\ensuremath{\mathbb{C}}\xspace}
	\newcommand{\R}{\ensuremath{\mathbb{R}}\xspace}
	\newcommand{\N}{\ensuremath{\mathbb{N}}\xspace}
	\newcommand{\K}{\ensuremath{\mathbb{K}}\xspace}
	\newcommand{\catc}{\mathfrak{C}^{*}\text{-}\mathfrak{alg}}
	\newcommand{\Mod}{\mathfrak{Mod}}	
	\newcommand{\Ext}{\ensuremath{\operatorname{Ext}}}
	\theoremstyle{plain}
	\newtheorem{thm}{Theorem}[section]
	\newtheorem{lemma}[thm]{Lemma}
	\newtheorem{theor}[thm]{Theorem}
	\newtheorem{propo}[thm]{Proposition}
	\newtheorem{corol}[thm]{Corollary}
	\theoremstyle{definition}
	\newtheorem{defin}[thm]{Definition}
	\numberwithin{equation}{section}
	\numberwithin{figure}{section}
\begin{document}
	\title[$C^{*}$-algebras over the one-point compactification of $\N$]{Classification of tight $C^{*}$-algebras over the one-point compactification of $\N$}
	\author{James Gabe}
        \address{Department of Mathematical Sciences \\
        University of Copenhagen\\
        Universitetsparken~5 \\
        DK-2100 Copenhagen, Denmark}
        \email{gabe@math.ku.dk}
        
	\author{Efren Ruiz}
        \address{Department of Mathematics\\University of Hawaii,
Hilo\\200 W. Kawili St.\\
Hilo, Hawaii\\
96720-4091 USA}
        \email{ruize@hawaii.edu}
        \date{\today}
	

	\keywords{Classification, continuous fields of $C^{*}$-algebras, $C^{*}$-algebras over $X$, graph $C^{*}$-algebras}
	\subjclass[2010]{Primary: 46L35}

	\begin{abstract}
	We prove a strong classification result for a certain class of $C^{*}$-algebras with primitive ideal space $\widetilde{\N}$, where $\widetilde{\N}$ is the one-point compactification of $\N$.  This class contains the class of graph $C^{*}$-algebras with primitive ideal space $\widetilde{\N}$.  Along the way, we prove a universal coefficient theorem with ideal-related $K$-theory for $C^{*}$-algebras over $\widetilde{\N}$ whose $\infty$ fiber has torsion-free $K$-theory.
	\end{abstract}

        \maketitle

\section{Introduction}

Continuous fields of $C^{*}$-algebras appear naturally in the theory of $C^{*}$-algebras since every $C^{*}$-algebra with a Hausdorff primitive ideal space is isomorphic to a continuous field of $C^{*}$-algebras with simple fibers (see \cite{Fell:Acta61} and \cite{BlanchardKirchberg:JOT04}).  The problem of classifying these $C^{*}$-algebras is an important and classical problem in the theory.  In general, these algebras are very far from being locally trivial.  In a classical paper \cite{DixmierDouady:BSMF63}, Jacques Dixmier and Adrien Douady classified a certain class of continuous fields of $C^{*}$-algebras over $X$ (continuous trace $C^{*}$-algebras) by associating to each such $C^{*}$-algebra an element in the third cohomology group $\check{H}^{3} ( X , \Z )$.  By \cite[Theorem~5.2]{Dadarlat:Adv09}, if $X$ is zero-dimensional, then the section algebra of the continuous field of $C^{*}$-algebras studied by Dixmier and Douady are AF-algebras.  Thus, by Elliott's classification of AF-algebras \cite{af}, they are classified by their $K_{0}$-groups.  

Since $K$-theory has proven to be a very successful invariant for classifying $C^{*}$-algebras, it is natural to ask ``to what extent does $K$-theory classify continuous fields of $C^{*}$-algebras with simple fibers that are classifiable via $K$-theory''.  There has been recent progress in this direction, for example, Marius Dadarlat and Cornel Pasnicu in \cite{DadarlatPasnicu:JFA05} classified continuous fields of $C^{*}$-algebras over locally compact, metrizable, zero-dimensional spaces for which the fibers are purely infinite simple $C^{*}$-algebras.  Partial results have also been obtained involving continuous fields of $C^{*}$-algebras over non-zero dimensional spaces (see \cite{DadarlatElliott:CMP07}, \cite{DadarlatElliottNiu:CJM11}, \cite{BentmanDadarlat:arXiv:1306.1691}, and \cite{Bentman:arXiv:1308.2126}).  In all of the above results, either all the fibers are purely infinite or all the fibers are AF-algebras. 

In this paper, we consider the classification of $C^{*}$-algebras whose primitive ideal space is Hausdorff and fibers are of mixed type.  In fact, we consider the classification problem for $C^{*}$-algebras whose primitive ideal space is $\widetilde{\N}$ and each fiber is either an AF-algebra or a purely infinite simple $C^{*}$-algebra.  Here $\widetilde{\N} = \N \cup \{ \infty \}$ is the one-point compactification of $\N$.  We show that an ordered isomorphism between ideal-related $K$-theory with coefficients (as defined in \cite{mdrm:ethy}) lifts to an isomorphism between the stabilized $C^{*}$-algebras.  Moreover, if the $\infty$ fiber has torsion-free $K$-theory, then ideal-related $K$-theory with coefficients can be replace by ideal-related $K$-theory (which in general is a much simpler invariant).  This is done by proving a universal coefficient theorem involving ideal-related $K$-theory for $C^{*}$-algebras over $\widetilde{\N}$ whose $\infty$ fiber has torsion-free $K$-theory.  It was shown in \cite{mdrm:ethy} that a universal coefficient theorem involving ideal-related $K$-theory does not exist in general.  

We note that we can not use the results in \cite{ERRshift} since the extension 
\begin{align*}
0 \to \mathfrak{A}( \N ) \to \mathfrak{A} \to \mathfrak{A} (\infty) \to 0
\end{align*}    
does not satisfy the property that for every nonzero $a \in \mathfrak{A}( \infty )$, the ideal generated by $\tau ( a )$ in the corona algebra $\corona{ \mathfrak{A}(\N) }$ is $\corona{ \mathfrak{A}(\N)}$.  Here, $\tau$ denotes the Busby map of the above extension.  Instead, we prove existence and uniqueness theorems, which together with an intertwining argument give the desired result.

One of our motivations for studying this class of $C^{*}$-algebras is that this class contains the class of graph $C^{*}$-algebras whose primitive ideal space is $\widetilde{\N}$.  In fact, it was shown by the first named author in \cite{jg:t1graph} that a graph $C^\ast$-algebra with a $T_{1}$ (in particular Hausdorff) primitive ideal space has a canonical $C^{*}$-algebra over $\widetilde{\N}$ structure.  In this paper, we classify those for which this structure is tight over $\widetilde{\N}$  (see Definition~\ref{d:Xalgs}).  This paper contributes to the on going program to classify real rank zero graph $C^{*}$-algebras using ideal-related $K$-theory.  See \cite{ERR:gjogv} for an overview of the classification program for graph $C^{*}$-algebras with finitely many ideals.  To the authors knowledge, all known classification results for graph $C^{*}$-algebras involve graph $C^{*}$-algebras with finitely many gauge invariant ideals.   Thus, Theorem~\ref{t:classgraph} is the first classification result for graph $C^{*}$-algebras with infinitely many gauge-invariant ideals of mixed type.

\section{Preliminaries}

In this section, we recall the definition of $C^{*}$-algebras over $X$ and ideal-related $K$-theory (with coefficients) for $C^{*}$-algebras over a totally disconnected space $X$.  We also prove several structural properties of $C^{*}$-algebras over $\widetilde{\N}$ that will be used throughout the paper.  

Throughout the paper, $\Sigma \mathfrak{A}$ will denote the suspension $C_{0} ( \R ) \otimes \mathfrak{A}$ of $\mathfrak{A}$ and $\Sigma^{j} \mathfrak{A}$ is defined recursively $\Sigma ( \Sigma^{j-1} \mathfrak{A}) )$.

\subsection{$C^{*}$-algebras over topological spaces} Let $X$ be a topological space and let $\mathbb{O}( X)$ be the set of open subsets of $X$, partially ordered by set inclusion $\subseteq$.  A subset $Y$ of $X$ is called \emph{locally closed} if $Y = U \setminus V$ where $U, V \in \mathbb{O} ( X )$ and $V \subseteq U$.  The set of all locally closed subsets of $X$ will be denoted by $\mathbb{LC}(X)$.  For a $C^{*}$-algebra $\mathfrak{A}$, let $\mathbb{I} ( \mathfrak{A} )$ be the set of all closed two-sided ideals of $\mathfrak{A}$, partially ordered by $\subseteq$.  

\begin{defin}\label{d:Xalgs}
Let $\mathfrak{A}$ be a $C^{*}$-algebra.  Let $\mathrm{Prim} ( \mathfrak{A} )$ denote the \emph{primitive ideal space} of $\mathfrak{A}$, equipped with the usual hull-kernel topology, also called the Jacobson topology.

Let $X$ be a topological space.  A \emph{$C^{*}$-algebra over $X$} is a pair $( \mathfrak{A} , \psi )$ consisting of a $C^{*}$-algebra $\mathfrak{A}$ and a continuous map $\ftn{ \psi }{ \mathrm{Prim} ( \mathfrak{A} ) }{ X }$.  A $C^{*}$-algebra over $X$, $( \mathfrak{A} , \psi )$, is \emph{separable} if $\mathfrak{A}$ is a separable $C^{*}$-algebra.  We say that $( \mathfrak{A} , \psi )$ is \emph{tight} if $\psi$ is a homeomorphism.  
\end{defin}

We will always identify $\mathbb{O} ( \mathrm{Prim} ( \mathfrak{A} ) )$ and $\mathbb{I} ( \mathfrak{A} )$ using the canonical lattice isomorphism $U \mapsto \bigcap_{ \mathfrak{p} \in \mathrm{Prim} ( \mathfrak{A} ) \setminus U } \mathfrak{p}$.  Let $( \mathfrak{A} , \psi )$ be a $C^{*}$-algebra over $X$.  Then we get a map $\ftn{ \psi^{*} }{ \mathbb{O} ( X ) }{ \mathbb{O} ( \mathrm{Prim} ( \mathfrak{A} ) ) }$ defined by $U \mapsto \setof{ \mathfrak{p} \in \mathrm{Prim} ( \mathfrak{A} ) }{ \psi ( \mathfrak{p} ) \in U }$.  Using the lattice isomorphism $\mathbb{O} ( \mathrm{Prim} ( \mathfrak{A} ) ) \cong \mathbb{I} ( \mathfrak{A} )$, we get a map, which we again denote by $\psi^{*}$, from $\mathbb{O}(X)$ to $\mathbb{I}(\mathfrak{A} )$ by
\begin{align*}
U \mapsto \bigcap \setof{ \mathfrak{p} \in \mathrm{Prim} ( \mathfrak{A} ) }{ \psi ( \mathfrak{p} ) \notin U }.
\end{align*}
Denote this ideal by $\mathfrak{A}(U)$.  For $Y = U \setminus V \in \mathbb{LC} ( X )$, set $\mathfrak{A}(Y) = \mathfrak{A} (U) / \mathfrak{A}(V)$.   By \cite[Lemma~2.15]{rmrn:bootstrap}, $\mathfrak{A} ( Y)$ (up to a canonical choice of isomorphism) does not depend on $U$ and $V$.

By \cite[Lemma 2.25]{rmrn:bootstrap} it follows that if $X$ is a sober space (which is no actual restriction on $X$) then any $C^\ast$-algebra $\mathfrak A $ together with a map $\psi^\ast\colon \mathbb O(X) \to \mathbb I(A)$, which respects arbitrary suprema, finite infima and such that $\psi^\ast(\emptyset)=0,\psi^\ast(X)=\mathfrak A$, gives rise to a continuous map $\phi \colon \mathrm{Prim} ( \mathfrak A ) \to X$ such that $\psi^\ast=\phi^\ast$.

\begin{defin}
Let $(\mathfrak{A} , \psi )$ be a $C^{*}$-algebra over $X$.  We say that $( \mathfrak{A} , \psi )$ is \emph{continuous} if $\psi^{*}$ respects arbitrary infima, i.e., for any collection of open subsets $\{ U_{\lambda} \}$ of $X$, then 
\begin{align*}
\mathfrak{A} (U) = \bigcap_{\lambda} \mathfrak{A}(U_{\lambda})
\end{align*}  
where $U$ is the interior of $\bigcap_{\lambda}U_{\lambda}$.
\end{defin}

We should remark, that in the case that $X$ is a locally compact Hausdorff space, a $C^\ast$-algebra over $X$ is the same as a $C_{0}(X)$-algebra by \cite[Proposition 2.11]{rmrn:bootstrap}. Combining \cite[Lemma 2.9]{rmrn:bootstrap} with \cite[Corollary 2.2]{mn:bundles} one gets that continuous $C^\ast$-algebras over $X$ correspond exactly to continuous $C_{0}(X)$-algebras.

\begin{defin}
Let $\mathfrak{A}$ and $\mathfrak{B}$ be $C^{*}$-algebras over $X$.  A $*$-homomorphism $\ftn{ \phi }{ \mathfrak{A} }{ \mathfrak{B} }$ is \emph{$X$-equivariant} if $\phi ( \mathfrak{A} (U) ) \subseteq \mathfrak{B} ( U )$ for all $U \in \mathbb{O}(X)$.  Hence, for every $Y = U \setminus V$, $\phi$ induces a $*$-homomorphism $\ftn{ \phi_{Y} }{ \mathfrak{A} ( Y ) }{ \mathfrak{B} (Y) }$.  Let $\catc(X)$ be the category whose objects are $C^{*}$-algebras over $X$ and whose morphisms are $X$-equivariant $*$-homomorphisms.  

For any subspace $Y$ of $X$, we get a canonical covariant functor $i_Y \colon \catc(Y) \to \catc(X)$ given by $i_Y(\mathfrak A) (U) = \mathfrak A(Y\cap U)$, for every $U\in \mathbb O(X)$. In particular, if $Y=\{ x\}$ for a point $x\in X$ we obtain the functor $\ftn{ i_x }{ \catc \cong \catc(\{x\})}{ \catc(X) }$.
\end{defin}

Suppose that $X$ is a sober space. Let  $\mathfrak A_1 \xrightarrow{\lambda_{1,2}} \mathfrak A_2 \xrightarrow{\lambda_{2,3}} \dots$ be an inductive system with each $\mathfrak A_n$ a $C^\ast$-algebra over $X$ and $\lambda_{k,k+1}$ an $X$-equivariant $\ast$-homomorphism. We will say that $(\mathfrak A_k, \lambda_{k,k+1})$ is an inductive system of $C^\ast$-algebras over $X$. By exactness of the $C^\ast$-algebra inductive limit functor, the inductive limit $\mathfrak A$ is canonically a $C^\ast$-algebra over $X$ for which the induced $\ast$-homomorphisms $\iota_k \colon \mathfrak A_k \to \mathfrak A$ are $X$-equivariant.

\begin{lemma}\label{l:Xlimit}
Let $(\mathfrak A_k,\lambda_{k,k+1})$ and $(\mathfrak B_k,\mu_{k,k+1})$ be inductive systems of $C^\ast$-algebras over $X$ and let $\mathfrak A$ and $\mathfrak B$ be the respective inductive limits. Suppose that there are $X$-equivariant $\ast$-homomorphisms $\phi_n \colon \mathfrak A_n \to \mathfrak B_{k_n}$ which generate a $\ast$-homomorphism $\phi \colon \mathfrak A \to \mathfrak B$. Then $\phi$ is $X$-equivariant.
\end{lemma}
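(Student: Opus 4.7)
The plan is to unwind what $X$-equivariance of $\phi$ actually requires and then exploit the density supplied by the inductive limit. Fix $U \in \mathbb{O}(X)$; the goal is to show $\phi(\mathfrak{A}(U)) \subseteq \mathfrak{B}(U)$, which is precisely the defining condition of $X$-equivariance at the open set $U$.

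The first key step is to identify the ideal $\mathfrak{A}(U) \subseteq \mathfrak{A}$ in terms of the inductive system. Since each connecting map $\lambda_{n,n+1}$ is $X$-equivariant, the ideals $\mathfrak{A}_n(U) \subseteq \mathfrak{A}_n$ form a compatible subsystem, so by exactness of the $C^{*}$-algebra inductive limit functor their limit is an ideal of $\mathfrak{A}$ equal to $\overline{\bigcup_n \iota_n(\mathfrak{A}_n(U))}$. One then needs to verify, using the sobriety criterion from \cite[Lemma 2.25]{rmrn:bootstrap} recalled in the text, that this ideal coincides with $\mathfrak{A}(U)$ for the canonical $X$-structure on $\mathfrak{A}$; this is the one substantive point of the argument, but it is essentially already packaged into the definition of the $X$-structure on the inductive limit alluded to immediately before the lemma.

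Once that identification is in place, the remainder is formal. Write $\iota'_k \colon \mathfrak{B}_k \to \mathfrak{B}$ for the structural maps of the second system; the hypothesis that the $\phi_n$ generate $\phi$ amounts to the compatibility $\phi \circ \iota_n = \iota'_{k_n} \circ \phi_n$ for every $n$. $X$-equivariance of $\phi_n$ gives $\phi_n(\mathfrak{A}_n(U)) \subseteq \mathfrak{B}_{k_n}(U)$, and $X$-equivariance of $\iota'_{k_n}$ gives $\iota'_{k_n}(\mathfrak{B}_{k_n}(U)) \subseteq \mathfrak{B}(U)$. Composing these inclusions yields $\phi(\iota_n(\mathfrak{A}_n(U))) \subseteq \mathfrak{B}(U)$ for each $n$. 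Because $\mathfrak{B}(U)$ is norm-closed and $\phi$ is continuous, passing to the closure of $\bigcup_n \iota_n(\mathfrak{A}_n(U))$ gives $\phi(\mathfrak{A}(U)) \subseteq \mathfrak{B}(U)$, which finishes the argument. I anticipate no real obstacle beyond keeping the bookkeeping of the two structural maps $\iota_n$ and $\iota'_{k_n}$ straight.
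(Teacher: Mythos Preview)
Your proposal is correct and follows essentially the same approach as the paper: both fix $U\in\mathbb O(X)$, use the identification $\mathfrak A(U)=\overline{\bigcup_n \iota_n(\mathfrak A_n(U))}$ coming from the canonical $X$-structure on the limit, and then push elements through the commuting square $\phi\circ\iota_n=\iota'_{k_n}\circ\phi_n$ together with the $X$-equivariance of $\phi_n$ and $\iota'_{k_n}$ to land in the closed ideal $\mathfrak B(U)$. The paper phrases this as an $\epsilon$-approximation argument on a single element while you phrase it set-theoretically via density and continuity, but these are the same proof.
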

\begin{proof}
Let $U \in \mathbb O(X)$ and $a\in \mathfrak A(U)$, so that we should show that $\phi(a) \in \mathfrak B(U)$. Given $\epsilon >0$ we may, by the $X$-equivariant structure of $\mathfrak A$, find an $N$ and an $a'$ in $\mathfrak A_N(U)$ such that $\iota_N(a') \approx_{\epsilon } a$. Thus 
\[
\phi(a) \approx_{\epsilon} \phi ( \iota_N(a')) = \iota_{k_N}(\phi_N(a')) \in \mathfrak B(U),
\]
since both $\iota_{k_N}$ and $\phi_N$ are $X$-equivariant. Thus $\phi(a)\in \mathfrak B(U)$.
\end{proof}

\subsection{Invariants for $C^{*}$-algebras over a totally disconnected space.}

Let $\mathcal{P} \subseteq \N$ be the set consisting of~\(0\) and all prime powers. The relevance of the set $\mathcal{P}$ in the Universal Multicoefficient Theorem is that the groups $\Z_{p} := \Z /p \Z$ for $p\in \mathcal{P}$ are exactly the indecomposable abelian groups.

For non-zero $p\in \mathcal{P}$, let $\mathbb{I}_p$ be the mapping cone of the unital $*$-homomorphism $\C \to \mathsf{M}_p(\C)$.  For $p=0$, we let $\mathbb{I}_0 := \C$.  It is convenient to denote $\mathbb{I}_p$ by $\mathbb{I}_p^0$ and its suspension $\Sigma\mathbb{I}_p$ by $\mathbb{I}_p^1$.  Then for a $C^{*}$-algebra $\mathfrak{A}$:
\[
K_i(A;\Z_{p}) := \kk_i(\mathbb{I}_p,A)\cong \kk(\mathbb{I}_p^i,A), \quad i=0,1.
\]

Let us set $\mathbb{I} := \bigoplus_{p\in \mathcal{P} } \mathbb{I}_p$ and consider the ring $\kk_*(\mathbb{I},\mathbb{I})$ with multiplication given by the Kasparov product.  The non-unital subring
\[
\Lambda=\bigoplus_{p,q\in\mathcal{P}} \kk_*(\mathbb{I}_p,\mathbb{I}_q)
\]
of $\kk_*(\mathbb{I},\mathbb{I})$ is called the ring of \emph{B\"ockstein operations}.  It consists of matrices indexed by $\mathcal{P} \times \mathcal{P}$ with only finitely many non-zero entries $\lambda_{pq}\in \kk_*(\mathbb{I}_p,\mathbb{I}_q)$.  The Kasparov product
\[
\kk_*(\mathbb{I}_p,\mathbb{I}_q)\times \kk_*(\mathbb{I}_q, \mathfrak{A}) \to \kk_*(\mathbb{I}_p, \mathfrak{A})
\]
induces a natural $\Lambda$-module structure on the $\Z/2\times \mathcal{P}$-graded group
\[
\underline{K}( \mathfrak{A} )=\bigoplus_{p\in \mathcal{P}} K_*( \mathfrak{A};\Z_{p}).
\]

If $\mathfrak{A}$ is a separable $C^{*}$-algebra over a totally disconnected, metrizable, compact space~$X$, then $\underline{K}( \mathfrak{A} )$ has a natural structure of a module over the ring $C(X,\Lambda)$ of locally constant functions from $X$ to $\Lambda$,m and $K_{*} ( \mathfrak{A} )$ has a natural structure of a $\Z_2$-graded module over the ring $C(X,\Z)$ of locally constant functions from $X$ to $\Z$.  This is easily seen by observing that $ \mathfrak{A} \cong \bigoplus_{k=1}^n \mathfrak{A} (U_k)$ naturally for any clopen partition $(U_k)_{k=1}^n$ of $X$.  In the case when we have an evenly graded homomorphism of $\Z_2$-graded $C(X,\Z)$-modules, we will often abuse notation by just saying that we have a $C(X,\Z)$-homomorphism.

\subsection{Structural properties of $C^{*}$-algebras over $\widetilde{\N}$}  Let $\{ \mathfrak{A}_{n} \}_{n=1}^{\infty}$ be a sequence of $C^{*}$-algebras.  Set 

\begin{center}
$\displaystyle c_{0} ( \{ \mathfrak{A}_{n} \} ) = \setof{ \{ a_{n} \}_{ n = 1}^{ \infty } }{ a_{n} \in \mathfrak{A}_{n} , \ \lim_{ n \to \infty } \| a_{n} \| = 0 }$, 

\medskip

$\displaystyle \ell^{\infty} ( \{ \mathfrak{A}_{n} \} ) = \setof{ \{ a_{n} \}_{ n = 1}^{\infty } }{ \text{$a_{n} \in \mathfrak{A}_{n}$ and $\{ a_{n} \}_{ n = 1}^{\infty}$ is bounded}}$, 

\medskip

$\displaystyle q_{\infty} ( \{ \mathfrak{A}_{n} \} ) = \ell^{\infty} ( \{ \mathfrak{A}_{n} \} ) / c_{0} ( \{ \mathfrak{A}_{n} \} )$.
\end{center}
If $\{ \mathfrak{B}_{n} \}_{ n = 1}^{\infty}$ is a sequence of $C^{*}$-algebras and if $\{ \ftn{ \phi_{n} }{ \mathfrak{A}_{n} }{ \mathfrak{B}_{n} } \}_{ n = 1}^{\infty}$ is a sequence of $*$-homomorphisms, then $c_{0} ( \{ \phi_{n} \} )$ will denote the $*$-homomorphism from $c_{0} ( \{ \mathfrak{A}_{n} \} )$ to $c_{0} ( \{ \mathfrak{B}_{n} \} )$ induced by $\{ \phi_{n} \}_{ n = 1}^{\infty}$, $\ell^{\infty} ( \{ \phi_{n} \} )$ will denote the $*$-homomorphism from $\ell^{\infty} ( \{ \mathfrak{A}_{n} \} )$ to $\ell^{\infty} ( \{ \mathfrak{B}_{n} \} )$ induced by $\{ \phi_{n} \}_{ n = 1}^{\infty}$, and $q_{\infty} ( \{ \phi_{n} \} )$ will denote the $*$-homomorphism from $q_{\infty} ( \{ \mathfrak{A}_{n} \} )$ to $q_{\infty} ( \{ \mathfrak{B}_{n} \} )$ induced by $\{ \phi_{n} \}_{ n = 1}^{\infty}$. 

Let $\mathfrak{A}$ be a $C^{*}$-algebra over $\widetilde{\N}$.  For each $n \in \widetilde{\N}$, denote the $*$-homomorphism from $\mathfrak{A}$ to $\mathfrak{A}(n)$ by $\pi_{n}$.  The quotient map from $\ell^{\infty} ( \{ \mathfrak{A} (n) \} )$ to $q_{\infty} ( \{ \mathfrak{A}( n) \} )$ will be denoted $\rho_{\mathfrak{A}}$ or just $\rho$.  

\begin{lemma}\label{l:contsubalg}
Let $\mathfrak A$ be a continuous $C^\ast$-algebra over $\widetilde \N$, $\mathfrak B$ be a $C^\ast$-algebra and let $\iota \colon \mathfrak B \to \mathfrak A( \infty ) $ be an injective $*$-homomorphism. Construct the pullback diagram
\[
\xymatrix{
0 \ar[r] & \mathfrak A(\N) \ar[r] \ar@{=}[d] & \mathfrak E \ar[r] \ar[d] & \mathfrak B \ar[d]^\iota \ar[r] & 0 \\
0 \ar[r] & \mathfrak A(\N) \ar[r] & \mathfrak A \ar[r]^-{\pi_{\infty}} & \mathfrak A( \infty ) \ar[r] & 0.
}
\]
Then $\mathfrak E$ is a continuous $C^\ast$-algebra over $\widetilde \N$ when given the structure $\mathfrak E(U) = \mathfrak E\cap (\mathfrak A(U)\oplus i_\infty(\mathfrak B)(U) )$ for $U\subseteq \widetilde \N$ open.
\end{lemma}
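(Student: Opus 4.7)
\begin{skproof}
The plan is to invoke the characterization (\cite[Lemma 2.25]{rmrn:bootstrap}, recalled in the excerpt) that since $\widetilde{\N}$ is sober, a $C^\ast$-algebra structure over $\widetilde{\N}$ on $\mathfrak E$ is determined by an ideal-valued map $U\mapsto \mathfrak E(U)$ satisfying $\mathfrak E(\emptyset)=0$, $\mathfrak E(\widetilde{\N})=\mathfrak E$, and respecting arbitrary suprema and finite infima; continuity is then the extra condition on arbitrary infima. The open subsets of $\widetilde{\N}$ come in two flavours: arbitrary $V\subseteq \N$, and sets $\{\infty\}\cup(\N\setminus F)$ with $F$ finite. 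Since $i_\infty(\mathfrak B)(U)$ equals $\mathfrak B$ if $\infty\in U$ and $0$ otherwise, I would first unpack $\mathfrak E(U)$ in the two cases: for $\infty\notin U$ it becomes $\{(a,0): a\in \mathfrak A(U)\}\cong \mathfrak A(U)$, and for $\infty\in U$ it is the pullback $\{(a,b): a\in \mathfrak A(U),\, \pi_\infty(a)=\iota(b)\}$. Each is an ideal of $\mathfrak E$ since $\mathfrak A(U)\oplus i_\infty(\mathfrak B)(U)$ is an ideal of $\mathfrak A \oplus \mathfrak B$, and the values at $\emptyset$ and $\widetilde{\N}$ together with preservation of finite infima are immediate from the corresponding facts for $\mathfrak A$ and $i_\infty(\mathfrak B)$.

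The genuine work lies in verifying arbitrary suprema. Given $U=\bigcup_\lambda U_\lambda$ and $(a,b)\in \mathfrak E(U)$, one inclusion is trivial. For the other, if $\infty \notin U$ then $b=0$ and the claim reduces to arbitrary suprema in $\mathfrak A$. If $\infty\in U$, pick $\lambda_0$ with $\infty\in U_{\lambda_0}$; then $\widetilde{\N}\setminus U_{\lambda_0}$ is a finite subset of $\N$, so $\mathfrak A(U_{\lambda_0})+\mathfrak A(\N)=\mathfrak A$ by finite suprema, whence $\mathfrak A(U_{\lambda_0})\twoheadrightarrow \mathfrak A(\infty)$, and we may lift $\iota(b)$ to an $a'\in \mathfrak A(U_{\lambda_0})$, yielding $(a',b)\in \mathfrak E(U_{\lambda_0})$. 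The remainder $a-a'$ lies in $\mathfrak A(U)\cap \mathfrak A(\N)=\mathfrak A(U\cap \N)$ (by finite infima in $\mathfrak A$), which equals $\overline{\sum_\lambda \mathfrak A(U_\lambda \cap \N)}$ by arbitrary suprema. Approximating $a-a'$ by finite sums $\sum_j a_j$ with $a_j\in \mathfrak A(U_{\lambda_j}\cap \N)$ gives approximants $(a_j,0)\in \mathfrak E(U_{\lambda_j})$, and the decomposition $(a,b)=(a',b)+(a-a',0)$ exhibits $(a,b)$ as an element of $\overline{\sum_\lambda \mathfrak E(U_\lambda)}$.

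For continuity, let $\{U_\lambda\}$ be a family and $U$ the interior of $\bigcap_\lambda U_\lambda$. If some $U_{\lambda_0}$ omits $\infty$ then every element of $\bigcap_\lambda \mathfrak E(U_\lambda)$ has vanishing $b$-component and $\infty\notin U$, so the equality $\bigcap_\lambda \mathfrak E(U_\lambda)=\mathfrak E(U)$ follows from continuity of $\mathfrak A$. If every $U_\lambda$ contains $\infty$ and $\bigcap_\lambda U_\lambda$ is already open, both sides agree tautologically as pullbacks. The delicate case is when every $U_\lambda$ contains $\infty$ but $\bigcap_\lambda U_\lambda$ is not open, in which case $U\subseteq \N$; for $(a,b)\in \bigcap_\lambda \mathfrak E(U_\lambda)$ continuity of $\mathfrak A$ gives $a\in \mathfrak A(U)\subseteq \mathfrak A(\N)$, so $\iota(b)=\pi_\infty(a)=0$, and injectivity of $\iota$ forces $b=0$, putting $(a,b)$ in $\mathfrak E(U)$. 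This last step is the only place where injectivity of $\iota$ is truly used, and together with the lift-and-split trick in the suprema step it is the main obstacle I anticipate.
\end{skproof}
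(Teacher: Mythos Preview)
Your proof is correct, but it takes a different route from the paper on the first half. The paper does not verify the lattice conditions for $U\mapsto \mathfrak E(U)$ by hand; instead it observes that $\pi_\infty\colon \mathfrak A \to i_\infty(\mathfrak A(\infty))$ and $\iota\colon i_\infty(\mathfrak B)\to i_\infty(\mathfrak A(\infty))$ are $\widetilde\N$-equivariant and then cites the pullback lemma \cite[Lemma~2.24]{mdrm:ethy} to conclude immediately that $\mathfrak E$ is a $C^\ast$-algebra over $\widetilde\N$. This bypasses entirely your suprema verification with the lift-and-split trick. Your approach, by contrast, works directly from \cite[Lemma~2.25]{rmrn:bootstrap} and is more self-contained, at the cost of being longer; it also makes the role of the special topology of $\widetilde\N$ (cofinite complements of neighbourhoods of $\infty$) visible in the suprema step, which the pullback lemma hides.

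For continuity the two arguments are essentially the same: both reduce to the situation where the interior $U$ of the intersection misses $\infty$, use continuity of $\mathfrak A$ to force the $\mathfrak A$-component into $\mathfrak A(U)\subseteq \mathfrak A(\N)$, and then invoke injectivity of $\iota$ to kill the $\mathfrak B$-component. The paper phrases this only for strictly decreasing \emph{sequences} of open sets (which suffices once one knows the $C^\ast$-algebra over $\widetilde\N$ structure exists), whereas you treat arbitrary families and distinguish the three subcases explicitly.
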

\begin{proof}
That $\mathfrak E$ is a $C^\ast$-algebra over $\widetilde \N$ follows from \cite[Lemma~2.24]{mdrm:ethy} by observing that $\pi_{\infty} \colon \mathfrak A \to i_\infty(\mathfrak A(\infty))$ and $\iota \colon i_\infty ( \mathfrak B) \to i_\infty (\mathfrak A( \infty ))$ are $\widetilde \N$-equivariant. 

For continuity we check that if $U_1 \supsetneq U_2 \supsetneq \dots$ is a strictly decreasing sequence of open subsets of $\widetilde \N$, and we let $U$ denote the interior of $\bigcap_{n=1}^\infty U_n$, then $\bigcap_{n=1}^\infty \mathfrak E(U_n) = \mathfrak E(U)$. Observe that $U\subseteq \N$ since the sequence $\{ U_n \}_{n=1}^\infty$ is strictly decreasing, and thus $\mathfrak{A}(U) \subseteq \mathfrak{A}(\N)$. Since $\mathfrak A$ is continuous we have that
\[
\bigcap_{n=1}^\infty \mathfrak E(U_n) = \mathfrak{E}\cap \bigcap_{n=1}^\infty (\mathfrak A(U_n)\oplus  i_\infty(\mathfrak B)(U_{n}) ) = \mathfrak E \cap \left(\mathfrak A(U) \oplus \bigcap_{n=1}^\infty i_\infty(\mathfrak B)(U_{n}) \right).
\]
Since $\pi_{\infty} ( \mathfrak{A}(U)) = 0$ and $\iota$ is injective it follows that
\[
\bigcap_{n=1}^\infty \mathfrak E(U_n) = \mathfrak E \cap (\mathfrak A(U) \oplus 0) = \mathfrak E\cap (\mathfrak A(U) \oplus i_\infty(\mathfrak B)(U) ) = \mathfrak E(U).
\]
\end{proof}

\begin{lemma}\label{l:reducedBusby}
Let $\mathfrak A$ be a $C^\ast$-algebra over $\widetilde \N$, and consider the commutative diagram with exact rows
\[
\xymatrix{
0 \ar[r] & \mathfrak A(\N) \ar[r] \ar[d]^\cong & \mathfrak A \ar[r] \ar[d]^{ \ell^{\infty} ( \{ \pi_n \} ) } & \mathfrak A( \infty ) \ar[r] \ar[d]^{\overline \tau_{\mathfrak{A}}} & 0 \\
0 \ar[r] & c_{0} ( \{ \mathfrak A( n ) \} ) \ar@{=}[d] \ar[r] & \ell^\infty( \{ \mathfrak A( n ) \} ) \ar[r]_{\rho_{\mathfrak{A}}} \ar[d]^{\iota_{\mathfrak{A}}} & q_{\infty} ( \{ \mathfrak A( n ) \} ) \ar[r] \ar[d]^{\overline \iota_{ \mathfrak{A} }} & 0 \\
0 \ar[r] & c_{0} (\{ \mathfrak A( n ) \} ) \ar[r] & \mathcal{M} ( c_{0} (  \{ \mathfrak A( n ) \} ) )\ar[r] & \mathcal{Q} ( c_{0}( \{ \mathfrak A( n ) \} )) \ar[r] & 0,
}
\]
where $\pi_n \colon \mathfrak A \to \mathfrak A ( n )$ are the canonical epimorphisms, $\iota$ is the canonical inclusion and $\overline \tau_{\mathfrak{A}}$ and $\overline \iota$ are the unique induced $*$-homomorphisms. Let $\tau_{\mathfrak{A}}$ denote the Busby map of the top row. Then $\tau_{\mathfrak{A}} = \overline \iota \circ \overline \tau_{\mathfrak{A}}$.  Also, 
\begin{align*}
\mathfrak{A} \cong \mathfrak{A} (  \infty  ) \oplus_{ \overline{\tau}_{\mathfrak{A}} , \rho_{\mathfrak{A}} } \ell^{\infty} ( \{ \mathfrak{A}(n) \} ) 
\end{align*}
via the $*$-isomorphism $a \mapsto ( \pi_{\infty}(a), \{ \pi_{n} ( a ) \}_{ n = 1}^{\infty} )$.
\end{lemma}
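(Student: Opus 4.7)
The plan is to reduce everything to showing that the leftmost vertical arrow $\mathfrak A(\N) \to c_0(\{\mathfrak A(n)\})$, $a \mapsto \{\pi_n(a)\}_{n \in \N}$, is a well-defined $*$-isomorphism; once this is in hand, commutativity of the diagram, the identity $\tau_{\mathfrak A} = \overline\iota_{\mathfrak A}\circ\overline\tau_{\mathfrak A}$, and the pullback description of $\mathfrak A$ are all routine consequences.

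First I would verify that for $a \in \mathfrak A(\N)$ one has $\|\pi_n(a)\| \to 0$. Since $\psi^{*}$ preserves arbitrary suprema, the ideal $\mathfrak A(\N)$ is the closure of $\sum_{n \in \N}\mathfrak A(\{n\})$, so any $a \in \mathfrak A(\N)$ can be $\epsilon$-approximated by some $a' \in \mathfrak A(F)$ with $F \subset \N$ finite; for $m \notin F$, $\pi_m(a') = 0$ yields $\|\pi_m(a)\| < \epsilon$. This makes the left vertical arrow well-defined into $c_0(\{\mathfrak A(n)\})$, while the middle vertical arrow is $\ell^\infty(\{\pi_n\})$ by definition, and the composite $\rho_{\mathfrak A}\circ \ell^\infty(\{\pi_n\})$ annihilates $\mathfrak A(\N)$, uniquely inducing $\overline\tau_{\mathfrak A}\colon \mathfrak A(\infty) \to q_\infty(\{\mathfrak A(n)\})$. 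Commutativity of the second and third squares, commutativity of the bottom row, and the identity $\tau_{\mathfrak A} = \overline\iota_{\mathfrak A}\circ \overline\tau_{\mathfrak A}$ then follow from the universal properties of the multiplier algebra and the Busby map of the top extension.

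Next I would prove the leftmost vertical arrow is a $*$-isomorphism. For injectivity, using the lattice identification $\mathbb O(\mathrm{Prim}(\mathfrak A)) \cong \mathbb I(\mathfrak A)$, one has $\ker\pi_n = \bigcap\{\mathfrak p : \psi(\mathfrak p) = n\}$ and $\mathfrak A(\N) = \bigcap\{\mathfrak p : \psi(\mathfrak p) = \infty\}$, so any $a$ in all of these lies in $\bigcap_{\mathfrak p \in \mathrm{Prim}(\mathfrak A)} \mathfrak p = 0$. For surjectivity, the ideals $\mathfrak A(\{n\})$, $n \in \N$, are pairwise orthogonal, since $\mathfrak A(\{m\})\cdot \mathfrak A(\{n\}) \subseteq \mathfrak A(\{m\})\cap \mathfrak A(\{n\}) = \mathfrak A(\emptyset) = 0$ when $m\neq n$ (using that $\psi^{*}$ respects finite infima). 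Given $\{b_n\}\in c_0(\{\mathfrak A(n)\})$, the orthogonality gives $\|\sum_{n=M}^{N} b_n\| = \max_{M \le n \le N}\|b_n\|$, so $b := \sum_{n \in \N} b_n$ converges in $\mathfrak A(\N)$ and satisfies $\pi_m(b) = b_m$.

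Finally, the $*$-isomorphism $\mathfrak A \cong \mathfrak A(\infty) \oplus_{\overline\tau_{\mathfrak A},\rho_{\mathfrak A}} \ell^\infty(\{\mathfrak A(n)\})$ via $a \mapsto (\pi_\infty(a),\{\pi_n(a)\})$ follows from the standard pullback argument: the image lies in the pullback by commutativity of the second square; injectivity is exactly the primitive-ideal argument above; and for surjectivity, given $(x,\{a_n\})$ in the pullback, lift $x$ to some $a' \in \mathfrak A$, observe that $\{\pi_n(a') - a_n\} \in \ker\rho_{\mathfrak A} = c_0(\{\mathfrak A(n)\})$, invoke the already-established surjectivity of the left vertical arrow to find $b \in \mathfrak A(\N)$ with $\pi_n(b) = \pi_n(a') - a_n$, and set $a := a' - b$. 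The main obstacle is the orthogonal-sum construction that establishes surjectivity of $\mathfrak A(\N) \to c_0(\{\mathfrak A(n)\})$; this requires both that each $\{n\}$ be clopen in $\widetilde\N$ and that the $C^{*}$-algebra-over-$\widetilde\N$ structure respect finite intersections, after which everything reduces to diagram chasing.
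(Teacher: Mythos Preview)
Your proof is correct and follows essentially the same strategy as the paper's, but with considerably more detail filled in. The paper's proof is terse: it dismisses the pullback description as ``a diagram chase in the top part of the diagram'' and verifies $\tau_{\mathfrak A} = \overline\iota \circ \overline\tau_{\mathfrak A}$ by directly computing that the canonical map $\sigma\colon \mathfrak A \to \mathcal M(\mathfrak A(\N)) \cong \ell^\infty(\{\mathcal M(\mathfrak A(n))\})$ satisfies $\sigma(a)(\{a_n\}) = \{\pi_n(a)a_n\} = (\iota_{\mathfrak A}\circ \ell^\infty(\{\pi_n\})(a))(\{a_n\})$. You instead spell out the isomorphism $\mathfrak A(\N)\cong c_0(\{\mathfrak A(n)\})$ explicitly (well-definedness via approximation by finite sums, injectivity via the primitive ideal intersection, surjectivity via orthogonal convergence), and for the Busby factorisation you invoke the uniqueness of a $*$-homomorphism $\mathfrak A \to \mathcal M(\mathfrak A(\N))$ extending the inclusion of $\mathfrak A(\N)$---which is indeed automatic, since for any such $\phi$ and any $b\in \mathfrak A(\N)$ one has $\phi(a)b = \phi(a)\phi(b) = \phi(ab) = ab$, forcing $\phi=\sigma$. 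Both arguments are valid; yours is more conceptual on the Busby point and more explicit on the diagram chase, while the paper's is a one-line computation.
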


\begin{proof}
That $\mathfrak{A} \cong \mathfrak{A} (  \infty  ) \oplus_{ \overline{\tau}_{\mathfrak{A}} , \rho_{\mathfrak{A}} } \ell^{\infty} ( \{ \mathfrak{A}(n) \} ) $ follows by a diagram chase in the top part of the diagram.  Let $\sigma : \mathfrak A \to \mathcal M(\mathfrak A(\N)) \cong \ell^\infty ( \{ \mathcal M(\mathfrak A( n )) \} )$ denote the map induced by the Busby map. Note that if $a\in \mathfrak A$ and $b\in \mathfrak A( n )$, then $ab = \pi_n(a)b\in \mathfrak A( n )$. Hence if $\{ a_n \}_{ n = 1}^{\infty} \in c_{0} ( \{ \mathfrak A( n ) \} )$ we get that
\[
\sigma (a) \left( \{a_n\}_{n=1}^\infty \right)= \left\{ \pi_n(a) a_n \right\}_{n=1}^\infty = (\iota \circ \ell^{\infty} ( \{ \pi_n\} ) (a)) \left( \{ a_n\}_{n=1}^\infty \right)
\]
and similarly $\{ a_n\}_{n=1}^\infty \sigma (a) = \{a_n\}_{n=1}^\infty (\iota \circ \ell^{\infty} ( \{ \pi_n\} ) (a)$ and thus $\sigma = \iota \circ \ell^{\infty}(\{ \pi_n\} ) $. This implies that $\tau_{\mathfrak{A}} = \overline \iota \circ \overline \tau_{\mathfrak{A}}$ by a canonical uniqueness argument.
\end{proof}

\begin{defin}
Let $\mathfrak{A}$ be a $C^{*}$-algebra over $\widetilde{\N}$.  The $*$-homomorphism $\overline{\tau}_{\mathfrak{A}}$ in Lemma~\ref{l:reducedBusby}, will be called the \emph{reduced Busby map of $\mathfrak{A}$}.    
\end{defin}

To ease the notation throughout the paper, we will remove the subscript $\mathfrak{A}$ of all $*$-homomorphisms in Lemma~\ref{l:reducedBusby} whenever we are working with only one algebra $\mathfrak{A}$.

\begin{lemma}\label{l:continuous}
Let $\mathfrak A$ be a $C^\ast$-algebra over $\widetilde \N$ and let $\ftn{ \overline \tau }{\mathfrak A( \infty)  }{ q_{\infty} ( \{ \mathfrak A( n ) \} ) }$ be the reduced Busby map. Then $\mathfrak A$ is continuous if and only if for any non-zero $a\in \mathfrak A(  \infty )$ and every lift $\{ a_n \}_{n=1}^\infty \in \ell^{\infty}( \{ \mathfrak A( n ) \} )$ of $\overline \tau(a)$, there is an $\epsilon > 0$ such that $\| a_n \| < \epsilon$ for only finitely many $n\in \N$.
\end{lemma}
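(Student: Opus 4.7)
\noindent\emph{Proof plan.} The plan is to prove both implications by unpacking the topology of $\widetilde{\N}$ and relating the continuity condition to the asymptotic behaviour of lifts in $\ell^{\infty}(\{\mathfrak A(n)\})$. A useful preliminary remark is that if $\{a_n\}$ and $\{a_n'\}$ are two lifts of $\overline\tau(a)$, their difference lies in $c_{0}(\{\mathfrak A(n)\})$, so $\|a_n\|-\|a_n'\|\to 0$; in particular the validity of the norm condition is independent of the chosen lift.

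For the forward direction, given non-zero $a\in \mathfrak A(\infty)$, I would pick any preimage $\tilde a\in \mathfrak A$ under $\pi_\infty$ to obtain the distinguished lift $\{\pi_n(\tilde a)\}_{n\in \N}$ of $\overline\tau(a)$. Invoking the equivalence (recalled in the preliminaries) between continuous $C^\ast$-algebras over $\widetilde{\N}$ and continuous $C_{0}(\widetilde{\N})$-algebras, the norm function $x\mapsto \|\pi_x(\tilde a)\|$ is then continuous on $\widetilde{\N}$, so $\|\pi_n(\tilde a)\|\to \|a\|>0$ as $n\to \infty$. Hence $\epsilon:=\|a\|/2$ satisfies the required conclusion for this lift, and by the preliminary remark for any other lift as well.

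For the backward direction, I would run a short case analysis on a family $\{U_\lambda\}\subseteq \mathbb O(\widetilde{\N})$. If some $U_\lambda$ omits $\infty$, then $\bigcap_\lambda U_\lambda \subseteq \N$ is open, and everything sits inside $\mathfrak A(\N)\cong c_{0}(\{\mathfrak A(n)\})$ (Lemma~\ref{l:reducedBusby}), where the identity $\mathfrak A(U)=\bigcap_\lambda \mathfrak A(U_\lambda)$ is immediate. If every $U_\lambda=\widetilde{\N}\setminus F_\lambda$ with $F := \bigcup_\lambda F_\lambda$ finite, then only finitely many distinct $U_\lambda$ occur and the identity follows from the finite-infimum property enjoyed by any $C^\ast$-algebra over $X$. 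The substantive case is $U_\lambda = \widetilde{\N}\setminus F_\lambda$ with $F$ infinite: here $\operatorname{int}(\bigcap_\lambda U_\lambda) = \N\setminus F$, while $\bigcap_\lambda \mathfrak A(U_\lambda) = \{a\in \mathfrak A : \pi_k(a)=0\text{ for all }k\in F\}$. Taking such an $a$ and setting $b := \pi_\infty(a)$, the lift $\{\pi_n(a)\}$ of $\overline\tau(b)$ vanishes on the infinite set $F$, so $\liminf_n\|\pi_n(a)\|=0$; the hypothesis forces $b=0$, hence $a\in \mathfrak A(\N)$, and combined with $\pi_k(a)=0$ for $k\in F$ this gives $a\in \mathfrak A(\N\setminus F)$. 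The main obstacle is this last case, specifically the correct identification of $\operatorname{int}(\bigcap_\lambda U_\lambda)$ in $\widetilde{\N}$ and the reduction to the infinite-$F$ situation; once the reduction is in place, invoking the hypothesis through the lift $\{\pi_n(a)\}$ closes the argument immediately.
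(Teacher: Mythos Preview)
Your proposal is correct. The backward direction is essentially the paper's argument reorganised as a direct case analysis rather than a contrapositive: the substantive Case~2b (infinite $F$) matches the paper's reasoning that an element in $\bigcap_k \mathfrak A(U_k)$ not lying in $\mathfrak A(\N)$ yields a lift of some non-zero $a\in\mathfrak A(\infty)$ vanishing on an infinite set.

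The forward direction, however, is genuinely different. The paper argues the contrapositive: given a non-zero $a$ and a lift with $\liminf_n\|a_n\|=0$, it extracts a subsequence tending to zero, zeroes out the lift there, and exhibits a decreasing sequence $U_k=\widetilde{\N}\setminus\{n_1,\dots,n_k\}$ for which $\bigcap_k\mathfrak A(U_k)\neq\mathfrak A(U)$. You instead invoke the equivalence (recorded in the preliminaries) between continuous $C^\ast$-algebras over $\widetilde{\N}$ and continuous $C_0(\widetilde{\N})$-algebras, so that $n\mapsto\|\pi_n(\tilde a)\|$ is continuous on $\widetilde{\N}$ and hence converges to $\|a\|>0$. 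Your route is shorter and more conceptual, and the preliminary remark that the condition is lift-independent (since any two lifts differ by an element of $c_0$) lets you reduce to the canonical lift $\{\pi_n(\tilde a)\}$; the paper's route is entirely self-contained, never leaving the lattice definition of continuity, which is useful if one does not want to import the $C_0(X)$-algebra machinery.
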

\begin{proof}
Suppose that $a\in \mathfrak A( \infty ) \setminus \{ 0 \}$, $\{ a_n \}_{n=1}^\infty \in \ell^\infty ( \{ \mathfrak A( n ) \} )$ lifts $\overline \tau(a)$ such that $\| a_n\| < 1/m$ for infinitely many $n\in \N$ for every $m\in \N$. Then we may pick an infinite subsequence $\{ a_{n_k} \}_{k=1}^\infty$ such that $\lim_{k\to \infty}\| a_{n_k} \| =0$. Let $F=\{ n_1 , n_2,\dots \}$ and define
\[
b_n = \left\{ \begin{array}{ll} a_n & \text{ if } n\notin F \\ 0 & \text{ if } n\in F. \end{array} \right.  
\]
Then $\{ b_n \}_{n=1}^\infty$ is a lift of $\overline \tau(a)$. Set $U_k = \widetilde \N \setminus \{ n_1,\dots n_k\}$. Then $\bigcap_{k=1}^\infty U_k = \widetilde \N \setminus F$ and since $F$ is infinite, the interior of this set, say $U$, is a subset of $\N$. Identify $\mathfrak A$ with the pullback $\mathfrak A( \infty ) \oplus_{\overline \tau,\rho} \ell^{\infty} (\{ \mathfrak A( n ) \} )$. Then the element $\left(a, \{ b_n\}_{n=1}^\infty\right)$ is in $\bigcap_{k=1}^\infty \mathfrak A(U_k)$ and since $a\neq 0$ this ideal is not contained in $\mathfrak A(\N)$. In particular, this implies that $\bigcap_{k=1}^\infty \mathfrak A(U_k) \neq \mathfrak A(U)$ and thus $\mathfrak A$ is not continuous.

For the other implication suppose that $\mathfrak A$ is not continuous and let $U_1 \supsetneq U_2 \supsetneq \dots$ be a (strictly decreasing) sequence of open subsets of $\widetilde \N$ such that $\bigcap_{k=1}^\infty \mathfrak A(U_k)  \neq \mathfrak{A} (U)$, where $U$ is the interior of $\bigcap_{ k = 1}^{\infty} U_{k}$.  Note that $\bigcap_{k=1}^\infty \mathfrak A(U_k) \nsubseteq \mathfrak A(\N)$ otherwise $\bigcap_{k=1}^\infty \mathfrak A(U_k)  = \mathfrak{A} (U)$.  Identifying $\mathfrak A$ with the pullback $\mathfrak A( \infty ) \oplus_{\overline \tau,\rho} \ell^\infty( \{ \mathfrak A( n ) \} )$, we may pick an element $\left(a, \{ a_n \}_{n=1}^\infty \right)$ in $\bigcap_{k=1}^\infty \mathfrak A(U_k)$ such that $a$ is non-zero. Now $\{ a_n \}_{n=1}^\infty$ is a lift of $\overline \tau(a)$ and $\| a_n\|=0$ for $n\in \N \setminus \bigcap_{k=1}^\infty U_k$ which is an infinite set.
\end{proof}

\begin{corol}\label{c:continuous}
Let $\mathfrak A$ be a continuous $C^\ast$-algebra over $\widetilde \N$ and let $\ftn{ \overline \tau }{ \mathfrak A(\infty) }{ \ell^\infty ( \{ \mathfrak A( n ) \} ) }$ denote the reduced Busby map. Suppose that $p\in \mathfrak A( \infty )$ is a non-zero projection and that $\{ q_n \}_{n=1}^\infty \in \ell^\infty( \{ \mathfrak A(n ) \} )$ is a projection which lifts $\overline \tau(a)$. Then $q_n=0$ for only finitely many $n\in \N$.
\end{corol}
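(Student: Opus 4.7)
The plan is to apply Lemma~\ref{l:continuous} and exploit the fact that a projection has norm in $\{0,1\}$, which converts the norm bound provided by that lemma into a genuine vanishing statement. Since $p\in \mathfrak A(\infty)$ is non-zero and $\{q_n\}_{n=1}^\infty \in \ell^\infty(\{\mathfrak A(n)\})$ is a lift of $\overline \tau(p)$, I would first invoke Lemma~\ref{l:continuous} to obtain some $\epsilon > 0$ such that the set $F_\epsilon := \{n\in \N : \|q_n\| < \epsilon\}$ is finite.

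Since shrinking $\epsilon$ only shrinks $F_\epsilon$, I may replace $\epsilon$ by $\min\{\epsilon, 1/2\}$ and thereby assume $0 < \epsilon \le 1/2$. Now each $q_n$ is a projection, so $\|q_n\| \in \{0,1\}$, and the inequality $\|q_n\| < \epsilon$ is equivalent to $q_n = 0$. Consequently $\{n\in \N : q_n = 0\} = F_\epsilon$ is finite, which is the desired conclusion.

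There is no real obstacle here; the content is just the elementary observation that a norm bound strictly below $1$ on a projection forces it to be zero, so that Lemma~\ref{l:continuous} applies essentially verbatim.
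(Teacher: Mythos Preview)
Your proof is correct and follows exactly the approach of the paper: invoke Lemma~\ref{l:continuous} and use that projections have norm $0$ or $1$. The shrinking step is harmless but unnecessary, since $\{ n : q_n = 0 \} \subseteq \{ n : \| q_n \| < \epsilon \}$ for \emph{any} $\epsilon > 0$.
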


\begin{proof}
This follows from Lemma~\ref{l:continuous} since each $q_n$ is a projection for each $n\in \N$ and thus has norm $0$ or $1$.
\end{proof}

\begin{corol}\label{c:tight}
Let $\mathfrak{A}$ be a $C^{*}$-algebra over $\widetilde{\N}$ such that $\mathfrak{A}(n)$ is non-zero and simple for all $n \in \widetilde{\N}$.  Then $\mathfrak{A}$ is a tight $C^{*}$-algebra over $\widetilde{\N}$ if and only if $\mathfrak{A}$ is a continuous $C^{*}$-algebra over $\widetilde{\N}$.
\end{corol}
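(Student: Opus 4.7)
The plan is to prove the two implications separately. The forward direction (tight $\Rightarrow$ continuous) needs no simplicity hypothesis and is essentially formal; the converse is where the simple-fibers assumption, together with Lemma~\ref{l:continuous}, plays an essential role.

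For ``tight $\Rightarrow$ continuous'': assuming $\psi\colon \mathrm{Prim}(\mathfrak A)\to \widetilde \N$ is a homeomorphism, the induced map $\psi^{\ast}\colon \mathbb O(\widetilde \N)\to \mathbb I(\mathfrak A)$ is a lattice isomorphism. Given a decreasing sequence $\{U_k\}$ of opens with $U:=\mathrm{int}(\bigcap_k U_k)$, the intersection $\bigcap_k \mathfrak A(U_k)$ is itself an ideal and hence equal to $\mathfrak A(V)$ for some open $V$; monotonicity of $\psi^{\ast}$ on both sides forces $V=U$.

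For ``continuous $\Rightarrow$ tight'', assume each $\mathfrak A(n)$ is simple and non-zero. First I would establish that $\psi$ is a bijection: for each $n\in \widetilde \N$, $\psi^{-1}(\{n\})$ is canonically identified with $\mathrm{Prim}(\mathfrak A(n))$, a singleton by simplicity. Since $\mathrm{Prim}(\mathfrak A)$ is sober, tightness then reduces to showing that every ideal $I\trianglelefteq \mathfrak A$ has the form $\mathfrak A(U)$ for some open $U\subseteq \widetilde \N$. I would set $U:=\{n\in \widetilde \N : \pi_n(I)\ne 0\}$, noting that simplicity forces $\pi_n(I)\in \{0,\mathfrak A(n)\}$, and split into two cases.

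If $\infty\notin U$, then $I\subseteq \mathfrak A(\N) \cong c_{0}(\{\mathfrak A(n)\})$ by Lemma~\ref{l:reducedBusby}, and the standard description of ideals in a $c_0$-direct sum of simple algebras gives $I = \mathfrak A(U)$; every subset of $\N$ is automatically open in $\widetilde \N$. If $\infty\in U$, then $\pi_\infty(I) = \mathfrak A(\infty)$. The critical point, and where continuity is used, is to show that $\N\setminus U$ is finite: pick $a\in I$ with $\pi_\infty(a)\ne 0$, use Lemma~\ref{l:reducedBusby} to identify $\{\pi_n(a)\}_{n=1}^\infty$ as a lift of $\overline \tau (\pi_\infty(a))$, and apply Lemma~\ref{l:continuous} to conclude that $\|\pi_n(a)\|$ is bounded below for all but finitely many $n$. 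This forces $\N\setminus U$ finite, so $U$ is open. Finally, I would verify $I=\mathfrak A(U)$ by showing that $I\cap \mathfrak A(\N) = \mathfrak A(U\cap \N)$ (using the $c_0$-ideal description together with the observation that $\pi_n(I)=\pi_n(I\cap \mathfrak A(\N))$ for $n\in \N$, obtained by multiplying with an approximate unit of $\mathfrak A(\{n\})$), and that $I$ surjects onto $\mathfrak A(\infty) = \mathfrak A(U)/\mathfrak A(U\cap \N)$; the inclusion $I\subseteq \mathfrak A(U)$ together with these two facts pins down $I$.

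The main obstacle is exactly the cofiniteness step in the case $\infty\in U$: translating the abstract continuity condition on sequences of opens into the concrete statement that an ideal with non-trivial $\infty$-fiber must have essentially full support on $\N$. Lemma~\ref{l:continuous} is designed precisely for this purpose, and once invoked the rest is routine ideal-theoretic bookkeeping using the pullback description of $\mathfrak A$ from Lemma~\ref{l:reducedBusby}.
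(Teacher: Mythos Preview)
Your proposal is correct and follows essentially the same approach as the paper. Both arguments split into the same two cases for the converse direction, invoke Lemma~\ref{l:continuous} in the identical way to force cofiniteness of the zero-fiber set when $\infty\in U$, and then conclude by comparing the ideal with $\mathfrak A(U)$ via the short exact sequence; the only cosmetic difference is that you first establish bijectivity of $\psi$ (giving injectivity of $\psi^\ast$) whereas the paper verifies injectivity of $\psi^\ast$ directly at the end.
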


\begin{proof}
Since $\mathfrak{A}$ is a tight $C^{*}$-algebra over $\widetilde{\N}$, we have that the map $\ftn{ \psi }{ \mathrm{Prim} (\mathfrak{A} ) }{ \widetilde{\N} }$ is a homeomorphism.  Hence, $\psi$ is an open map which implies that $\mathfrak{A}$ is a continuous $C^{*}$-algebra over $\widetilde{\N}$.  

Suppose $\mathfrak{A}$ is a continuous $C^{*}$-algebra over $\widetilde{\N}$.  Let $\mathfrak{I}$ be an ideal of $\mathfrak{A}$.  Suppose $\mathfrak{I} \subseteq \mathfrak{A}(\N)$.  Since $\mathfrak{A}(n)$ is simple for all $n \in \N$, we have that $\mathfrak{I} = \mathfrak{A}(U)$ for some $U \subseteq \N$.  Suppose $\mathfrak{I}$ is not a subset of $\mathfrak{A}(\N)$.  By Lemma~\ref{l:continuous}, 
\begin{align*}
F = \setof{ n \in \N }{ \mathfrak{I} \cap \mathfrak{A}(n)  = 0 }
\end{align*}  
is finite.  Set $U = \widetilde{ \N } \setminus F$.  Then $U$ is an open subset of $\widetilde{\N}$ and $\mathfrak{I} \subseteq \mathfrak{A}(U)$.  Let $\ftn{ \iota }{ \mathfrak{I} }{ \mathfrak{A}(U) }$ be the inclusion map.  Then the diagram
\begin{align*}
\xymatrix{
0 \ar[r] & \mathfrak{A}(U \setminus \{ \infty \} ) \ar[r] \ar@{=}[d] & \mathfrak{I} \ar[r] \ar[d]^{\iota} & \mathfrak{A}(\infty) \ar[r] \ar@{=}[d] & 0 \\
0 \ar[r] & \mathfrak{A}(U \setminus \{ \infty \} ) \ar[r] & \mathfrak{A}(U) \ar[r] & \mathfrak{A}(\infty) \ar[r] & 0
}
\end{align*}
is commutative and the rows are exact.  Thus, $\iota$ is surjective which implies that $\mathfrak{I} = \mathfrak{A}(U)$.  We have just shown that the lattice map $\mathbb O(\widetilde{\N}) \to \mathbb I(\mathfrak A)$ is surjective, thus it remains to show that it is injective. Let $U,V\in \mathbb O(\widetilde{\N})$ such that $\mathfrak A (U) = \mathfrak A(V)$. If $U\subseteq \N$ then as $\mathfrak A(U)$ is tight over $U$ it follows that $V=U$. If $\infty \in U$ then $F= \widetilde{\N} \setminus U$ is a finite subset of $\N$ and $\mathfrak A \cong \mathfrak A(F) \oplus \mathfrak A(U)$ naturally. It follows that $F = \widetilde{\N} \setminus V$ and thus $U=V$.
\end{proof}

\begin{defin}
A $C^{*}$-algebra $\mathfrak{A}$ has \emph{weak cancellation} if any pair of projections $p$ and $q$ in $\mathfrak{A}$ that generate the same closed ideal $\mathfrak{I}$ in $\mathfrak{A}$ and have the same image in $K_{0} ( \mathfrak{I} )$ must be Murray-von Neumann equivalent.  If $\mathsf{M}_{n} ( \mathfrak{A} )$ has weak cancellation for every $n$, then we say that $\mathfrak{A}$ has \emph{stable weak cancellation}.
\end{defin}

Note that $\mathfrak{A}$ has stable weak cancellation if and only if $\mathfrak{A} \otimes \K$ has weak cancellation.  Ara, Moreno, and Pardo in \cite{amp:nonstablekthy} showed that every graph $C^{*}$-algebra has stable weak cancellation.  It is an open question if every real rank zero $C^{*}$-algebra has stable weak cancellation.

\begin{propo}\label{p:rrstwc}
Let $\mathfrak{A}$ be a $C^{*}$-algebra over $\widetilde{\N}$.  If $\mathfrak{A}$ has real rank zero and $\mathfrak{A}(n)$ has stable weak cancellation for all $n \in \widetilde{\N}$, then $\mathfrak{A}$ has stable weak cancellation.  
\end{propo}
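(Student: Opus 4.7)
The plan is to exploit the pullback description $\mathfrak A \cong \mathfrak A(\infty)\oplus_{\overline\tau,\rho}\ell^\infty(\{\mathfrak A(n)\})$ from Lemma~\ref{l:reducedBusby}, together with the identification $\mathfrak A(\N)\cong c_0(\{\mathfrak A(n)\})$, to construct a global partial isometry by patching together local equivalences at $\infty$ and at each $n\in\N$. Since $\mathsf{M}_k(\mathfrak A)$ is again a $C^*$-algebra over $\widetilde{\N}$ satisfying the same hypotheses (real rank zero is Morita invariant, and $\mathsf{M}_k(\mathfrak A(n))$ has stable weak cancellation whenever $\mathfrak A(n)$ does), it suffices to prove weak cancellation for $\mathfrak A$ itself. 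So fix projections $p,q\in\mathfrak A$ generating the same ideal $\mathfrak I$ with $[p]=[q]$ in $K_0(\mathfrak I)$, and write $p=(p_\infty,\{p_n\})$, $q=(q_\infty,\{q_n\})$ in the pullback.

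The images $p_\infty,q_\infty$ generate the same ideal $\pi_\infty(\mathfrak I)\subseteq\mathfrak A(\infty)$ and have equal class in $K_0(\pi_\infty(\mathfrak I))$, so stable weak cancellation of $\mathfrak A(\infty)$ supplies a partial isometry $v_\infty\in\mathfrak A(\infty)$ with $v_\infty^* v_\infty=p_\infty$ and $v_\infty v_\infty^*=q_\infty$. Lift $v_\infty$ to some $\tilde v=(v_\infty,\{\tilde v_n\})\in\mathfrak A$; since $\tilde v^*\tilde v-p$ and $\tilde v\tilde v^*-q$ both vanish under $\pi_\infty$, they lie in $\mathfrak A(\N)\cong c_0(\{\mathfrak A(n)\})$, giving $\|\tilde v_n^*\tilde v_n-p_n\|\to 0$ and $\|\tilde v_n\tilde v_n^*-q_n\|\to 0$.

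For all $n$ past some threshold, a standard perturbation argument in $\mathfrak A(n)$ produces a partial isometry $v_n'$ with $v_n'^* v_n'=p_n$, $v_n'v_n'^*=q_n$, and $\|v_n'-\tilde v_n\|\to 0$: first normalize $\tilde v_n$ by the inverse square root of $p_n\tilde v_n^*\tilde v_n p_n$ (invertible in $p_n\mathfrak A(n)p_n$ for large $n$) to obtain an intermediate partial isometry with source $p_n$, and then conjugate by a close-to-identity unitary from the unitization of $\mathfrak A(n)$ to move its range projection exactly onto $q_n$. For the finitely many remaining $n$, stable weak cancellation of $\mathfrak A(n)$ applied to the ideal $\pi_n(\mathfrak I)$ (which is generated by $p_n$ and in whose $K_0$-group we have $[p_n]=[q_n]$) supplies any partial isometry $v_n'$ from $p_n$ to $q_n$. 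The sequence $\{v_n'\}$ is bounded, and $\{v_n'-\tilde v_n\}\in c_0(\{\mathfrak A(n)\})$, so $\rho(\{v_n'\})=\rho(\{\tilde v_n\})=\overline\tau(v_\infty)$; hence $v:=(v_\infty,\{v_n'\})$ is a genuine element of the pullback $\mathfrak A$ satisfying $v^* v=p$ and $vv^*=q$.

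The main obstacle is the large-$n$ perturbation step, where one must pass from approximate partial isometries to exact ones whose source and target projections are precisely $p_n$ and $q_n$, while keeping the norm-perturbation size tending to zero so that the correction vanishes in $q_\infty(\{\mathfrak A(n)\})$. This exact matching at each level, together with the patched choices made by stable weak cancellation on the initial finite segment, is what allows all the local equivalences to assemble into a single partial isometry of $\mathfrak A$ implementing $p\sim q$; the real rank zero hypothesis enters implicitly by ensuring the fibers have the abundance of projections underpinning these local adjustments.
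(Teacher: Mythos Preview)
Your argument is correct and takes a genuinely different, more hands-on route than the paper's. The paper's proof is two lines: it observes that $c_0(\{\mathfrak A(n)\})$ inherits stable weak cancellation from the fibers and then invokes \cite[Lemma~3.15]{err:fullext}, an external result on extensions with real rank zero, applied to $0\to\mathfrak A(\N)\to\mathfrak A\to\mathfrak A(\infty)\to 0$. Your approach instead works directly with the pullback description of Lemma~\ref{l:reducedBusby}: obtain a partial isometry at $\infty$ by weak cancellation there, lift it to $\mathfrak A$, correct the tail of fibers by a standard perturbation (polar part followed by a near-identity unitary conjugation), and handle the finitely many remaining fibers using weak cancellation in each $\mathfrak A(n)$. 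This is self-contained and avoids the external citation. One remark: your closing sentence that real rank zero ``enters implicitly'' is not accurate---none of your lifting, perturbation, or patching steps use it---so your argument in fact proves the proposition without the real rank zero hypothesis on $\mathfrak A$. The paper's route, by contrast, genuinely consumes that hypothesis via the cited extension lemma.
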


\begin{proof}
Since $\mathfrak{A}(n)$ has stable weak cancellation for each $n \in \N$, then $c_{0} ( \{ \mathfrak{A}(n) \} )$ has stable weak cancellation.  The proposition now follows from \cite[Lemma~3.15]{err:fullext}.
\end{proof}

\begin{lemma}\label{l:liftingfdquot}
Let $\mathfrak{A}$ be a $C^{*}$-algebra and let $\mathfrak{I}$ be an ideal of $\mathfrak{A}$ such that $\mathfrak{A} / \mathfrak{I}$ is a finite dimensional $C^{*}$-algebra.  If for every projection $p\in \mathfrak A$ the corner $p\mathfrak{I}p$ has an approximate identity consisting of projections, and every projection in $\mathfrak{A} / \mathfrak{I}$ lifts to a projection in $\mathfrak{A}$, then there exists a $*$-homomorphism $\ftn{ \phi }{ \mathfrak{A} / \mathfrak{I} }{ \mathfrak{A} }$ such that $\pi \circ \phi = \id_{ \mathfrak{A} / \mathfrak{I} }$, where $\ftn{ \pi }{ \mathfrak{A} }{ \mathfrak{A} / \mathfrak{I}}$ is the quotient map.

Consequently, if $\mathfrak{A}$ is a $C^{*}$-algebra with an ideal $\mathfrak{I}$ such that for every projection $p\in \mathfrak A$ the corner $p\mathfrak{I}p$ has an approximate identity consisting of projections, $\mathfrak{A} / \mathfrak{I}$ is an AF-algebra, and every projection in $\mathfrak{A}/\mathfrak{I}$ lifts to a projection in $\mathfrak{A}$, then there exists a sequence of finite dimensional sub-$C^{*}$-algebras $\{ \mathfrak{C}_{k} \}_{k = 1}^{\infty}$ of $\mathfrak{A}$ such that $\mathfrak{C}_{k} \cap \mathfrak{I} = 0$ for all $k$, $\mathfrak{C}_{k} + \mathfrak{I} \subseteq \mathfrak{C}_{k+1} + \mathfrak{I}$ for all $k$, and $\bigcup_{ k = 1}^{\infty} ( \mathfrak{C}_{k} +\mathfrak{I} )$ is dense in $\mathfrak{A}$.
\end{lemma}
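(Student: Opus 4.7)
\emph{Plan.} The first (main) assertion is a matrix unit lifting result. My strategy is to first reduce to the case where $\mathfrak{A}$ is unital and then lift a full system of matrix units for $\mathfrak{A}/\mathfrak{I}$ into $\mathfrak{A}$. The second assertion is an easy consequence of the first applied to the preimages of a dense increasing chain of finite-dimensional subalgebras of the AF quotient.

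For the reduction, use the hypothesis to lift $1_{\mathfrak{A}/\mathfrak{I}}$ to a projection $e\in \mathfrak{A}$. Then $e\mathfrak{A}e\cap \mathfrak{I} = e\mathfrak{I}e$, the inclusion induces a canonical isomorphism $(e\mathfrak{A}e)/(e\mathfrak{I}e)\xrightarrow{\cong}\mathfrak{A}/\mathfrak{I}$, and the hypotheses on corners and on projection lifting both pass to $(e\mathfrak{A}e, e\mathfrak{I}e)$; any splitting of the induced quotient then composes with the inclusion to give the desired splitting. Assuming $\mathfrak{A}$ unital, decompose $\mathfrak{A}/\mathfrak{I} = \bigoplus_{j=1}^N M_{n_j}(\C)$, fix matrix units $\{e_{st}^{(j)}\}$, and proceed in three inductive stages: (a) lift the minimal central projections $z_j = \sum_s e_{ss}^{(j)}$ to pairwise orthogonal projections $Z_j\in \mathfrak{A}$ summing to $1$; (b) within each $Z_j\mathfrak{A}Z_j$, lift the diagonal $e_{ss}^{(j)}$ to orthogonal subprojections $P_s^{(j)}\leq Z_j$ summing to $Z_j$; (c) lift each off-diagonal $e_{1s}^{(j)}$ to a partial isometry $V_s^{(j)}$ with $V_s^{(j)\,*}V_s^{(j)} = P_s^{(j)}$ and $V_s^{(j)}V_s^{(j)\,*} = P_1^{(j)}$. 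Setting $\phi(e_{st}^{(j)}) := V_s^{(j)\,*}V_t^{(j)}$ (with $V_1^{(j)} = P_1^{(j)}$) defines the splitting.

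The core technical device used at every stage is a ``projection absorption'' argument: given a projection $Y\in \mathfrak{A}$ lifting a projection $y\in \mathfrak{A}/\mathfrak{I}$ orthogonal to some already-constructed projection $P\in \mathfrak{A}$, one has $YPY \in Y\mathfrak{I}Y$, a positive element of a corner of the ideal. Using that $Y\mathfrak{I}Y$ has an approximate identity of projections, and hence has real rank zero, one produces a projection $f\in Y\mathfrak{I}Y$ with $fYPY = YPY = YPYf$; then $Z := Y - f$ is a projection with $\pi(Z) = y$, and since $ZY = YZ = Z$ one computes
\[
ZPZ = Z(YPY)Z = YPY - fYPY - YPYf + fYPYf = 0,
\]
so $PZ = ZP = 0$. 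The main obstacle is the \emph{exact} absorption $fYPY = YPY$ (rather than merely approximate); this is handled either by continuous functional calculus when $YPY$ has a spectral gap at zero, or by first perturbing to such an element using the approximate identity of projections. The same device produces the partial isometries in stage (c), via polar decomposition of the appropriately cut-down lift inside $P_1^{(j)}\mathfrak{A}P_s^{(j)}$.

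For the second assertion, fix a dense increasing sequence $\mathfrak{B}_1\subseteq \mathfrak{B}_2\subseteq \cdots$ of finite-dimensional $*$-subalgebras of $\mathfrak{A}/\mathfrak{I}$ and set $\mathfrak{A}_k := \pi^{-1}(\mathfrak{B}_k)$, a $C^*$-subalgebra of $\mathfrak{A}$ with $\mathfrak{A}_k/\mathfrak{I}\cong \mathfrak{B}_k$ finite-dimensional. The hypotheses of the first assertion are inherited by $(\mathfrak{A}_k,\mathfrak{I})$: projections in $\mathfrak{A}_k$ are projections in $\mathfrak{A}$, and any projection in $\mathfrak{B}_k$ lifts to a projection in $\mathfrak{A}$ which automatically lies in $\mathfrak{A}_k$. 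Applying the first assertion produces splittings $\phi_k\colon \mathfrak{B}_k\to \mathfrak{A}_k$; set $\mathfrak{C}_k := \phi_k(\mathfrak{B}_k)$. Then $\mathfrak{C}_k\cap \mathfrak{I} = 0$; the inclusion $\mathfrak{C}_k + \mathfrak{I}\subseteq \mathfrak{C}_{k+1}+\mathfrak{I}$ follows from $\pi(\mathfrak{C}_k) = \mathfrak{B}_k\subseteq \mathfrak{B}_{k+1} = \pi(\mathfrak{C}_{k+1})$ (any $c\in \mathfrak{C}_k$ differs from some $c'\in \mathfrak{C}_{k+1}$ by an element of $\mathfrak{I}$); and density of $\bigcup_k (\mathfrak{C}_k+\mathfrak{I})$ in $\mathfrak{A}$ follows from density of $\bigcup_k \mathfrak{B}_k$ in $\mathfrak{A}/\mathfrak{I}$ by quotient-norm approximation.
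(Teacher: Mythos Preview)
Your approach is the same as the paper's: the paper simply refers to \cite[Lemma~9.8]{dim} for the first assertion and then derives the second exactly as you do (apply the first part to $\pi^{-1}(\mathfrak{B}_k)$ for an increasing dense family $\{\mathfrak{B}_k\}$ of finite-dimensional subalgebras of the AF quotient). Your treatment of the second assertion is correct and essentially identical to the paper's.

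For the first assertion, your outline is right in spirit but the ``exact absorption'' device has a genuine technical problem. First, the parenthetical ``and hence has real rank zero'' is false: a $C^\ast$-algebra with an approximate identity of projections need not have real rank zero (e.g.\ a $c_0$-direct sum of copies of $C[0,1]$). More importantly, even with real rank zero one cannot in general find a projection $f$ with $f\cdot(YPY)=YPY$ exactly: take $Y\mathfrak{I}Y=\K$ and $YPY$ a positive compact operator with dense range. Your proposed fix (``perturb to an element with a spectral gap at zero'') does not rescue the exact-absorption framework as stated.

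The standard argument, which is what \cite[Lemma~9.8]{dim} actually does, abandons exact absorption. One uses the approximate identity of projections $(e_\lambda)$ in $Y\mathfrak{I}Y$ to obtain projections $Z_\lambda:=Y-e_\lambda$ with $\pi(Z_\lambda)=y$ and $\|PZ_\lambda\|^2=\|(Y-e_\lambda)YPY(Y-e_\lambda)\|\to 0$; then for $\lambda$ large the positive contraction $(1-P)Z_\lambda(1-P)\in\mathfrak{A}$ is close to the projection $Z_\lambda$, so its spectrum avoids a neighbourhood of $1/2$, and $Z:=\chi_{[1/2,1]}\bigl((1-P)Z_\lambda(1-P)\bigr)$ is a projection in $\mathfrak{A}$, orthogonal to $P$, with $\pi(Z)=\chi_{[1/2,1]}(y)=y$. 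The same ``approximate then perturb by functional calculus'' mechanism handles stage~(c): after cutting down a lift of $e_{1s}^{(j)}$ to $v\in P_1^{(j)}\mathfrak{A}P_s^{(j)}$, one first shrinks $P_s^{(j)}$ by a projection from the approximate identity of $P_s^{(j)}\mathfrak{I}P_s^{(j)}$ so that $v^\ast v$ becomes invertible in the smaller corner, and only then takes the polar part. Your phrase ``polar decomposition of the appropriately cut-down lift'' hides exactly this shrinking step, without which the polar part need not lie in $\mathfrak{A}$.
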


\begin{proof}
The first part of the lemma is proved in the same way as \cite[Lemma~9.8]{dim}.  The following are the key ingredients of the proof: (1) the existence of an approximate identity consisting of projections for $p \mathfrak{I} p$ for every projection $p \in \mathfrak{A}$ and (2) every projection in $\mathfrak{A}/ \mathfrak{I}$ lifts to a projection in $\mathfrak{A}$.

Suppose $\mathfrak{A}$ is a $C^{*}$-algebra with an ideal $\mathfrak{I}$ such that $p \mathfrak{I} p$ has an approximate identity consisting of projections for all projections $p \in \mathfrak{A}$, $\mathfrak{A} / \mathfrak{I}$ is an AF-algebra, every projection in $\mathfrak{A}/\mathfrak{I}$ lifts to a projection in $\mathfrak{A}$.  Since $\mathfrak{A} / \mathfrak{I}$ is an AF-algebra, there exists an increasing sequence of finite dimensional sub-$C^{*}$-algebras $\{ \mathfrak{D}_{k} \}_{ k = 1}^{\infty}$ of $\mathfrak{A} / \mathfrak{I}$ such that $\mathfrak{A} / \mathfrak{I} = \overline{ \bigcup_{ k = 1}^{\infty } \mathfrak{D}_{k} }$.  By the first part of the lemma, we have a sequence of $*$-homomorphisms, $\{ \ftn{ \phi_{k} }{ \mathfrak{D}_{k} }{ \mathfrak{A} }  \}_{ k = 1}^{\infty}$ such that $\pi \circ \phi_{k} = \id_{ \mathfrak{D}_{k}}$.  

Set $\mathfrak{C}_{k} = \phi_{k}( \mathfrak{D} _{k} )$.  Then $\mathfrak{C}_{k}$ is a finite dimensional sub-$C^{*}$-algebra of $\mathfrak{A}$.  Since $\pi \circ \phi_{k} = \id_{ \mathfrak{D}_{k}}$, we have that $\mathfrak{C}_{k} \cap \mathfrak{I} = 0$.  Note that $\mathfrak{C}_{k} + \mathfrak{I} \subseteq \mathfrak{C}_{k+1} + \mathfrak{I}$ since $\mathfrak{D}_{k} \subseteq \mathfrak{D}_{k+1}$.  Let $x \in \mathfrak{A}$ and let $\epsilon > 0$.  Since $\pi ( \bigcup_{ k =1}^{\infty} \mathfrak{C}_{k} ) = \bigcup_{ k = 1}^{\infty} \mathfrak{D}_{k}$, there exists $y_{1} \in \mathfrak{C}_{k}$ (for some $k$) such that  $\| \pi(x) - \pi( y_{1} ) \| < \epsilon$.  Thus, there exists $y_{2} \in \mathfrak{I}$ such that $\| x - y_{1} - y_{2} \| < \epsilon$.  Since $y_{1} + y_{2} \in \bigcup_{ k = 1}^{\infty} ( \mathfrak{C}_{k} + \mathfrak{I} )$, we have just shown that $\bigcup_{ k = 1}^{\infty} ( \mathfrak{C}_{k} +\mathfrak{I} )$ is dense in $\mathfrak{A}$.
\end{proof}

\begin{defin}
An extension $0 \to \mathfrak{B} \overset{\iota}{\to}  \mathfrak{E} \overset{\pi}{\to} \mathfrak{A} \to 0$ is said to be \emph{quasi-diagonal} if $\mathfrak{B}$ has an approximate identity consisting of projections $\{ p_{n} \}_{n=1}^{\infty}$ such that 
\begin{align*}
\lim_{ n \to \infty } \| \iota ( p_{n} ) x - x \iota( p_{n} ) \| = 0
\end{align*}
for all $x \in \mathfrak{E}$.
\end{defin}

We end this section by showing that the extension $0 \to \mathfrak{A}(\N) \to \mathfrak{A} \to \mathfrak{A}(\infty) \to 0$ is a quasi-diagonal extension under mild assumptions on the fibers.  This fact will be used repeatedly throughout the paper.

\begin{propo}\label{p:quasidiag}
Let $\mathfrak A$ be a separable $C^\ast$-algebra over $\widetilde{\mathbb N}$ such that each $\mathfrak A(n)$ has real rank zero and $\mathfrak A(\infty)$ an AF-algebra. Then the extension
\[
0 \to \mathfrak A(\mathbb N) \to \mathfrak A \to \mathfrak A(\infty) \to 0
\]
is a quasi-diagonal extension.
\end{propo}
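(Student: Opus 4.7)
The plan is to construct an approximate identity of projections in $\mathfrak A(\N)$ which is asymptotically central in $\mathfrak A$, built from a finite-dimensional filtration of the AF quotient combined with the hereditary real-rank-zero structure of the ideal. Via the identification $\mathfrak A(\N) \cong c_0(\{\mathfrak A(n)\})$ provided by Lemma~\ref{l:reducedBusby}, the ideal $\mathfrak A(\N)$ has real rank zero, and so does every hereditary corner $p\mathfrak A(\N)p$ for $p$ a projection in $\mathfrak A$; in particular each such corner admits an approximate identity of projections.

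I would first show that every projection $q\in \mathfrak A(\infty)$ lifts to a projection in $\mathfrak A$. Using the pullback identification $\mathfrak A \cong \mathfrak A(\infty) \oplus_{\overline\tau,\rho} \ell^\infty(\{\mathfrak A(n)\})$, pick any self-adjoint contractive lift $(a_n)_n\in \ell^\infty(\{\mathfrak A(n)\})$ of $\overline\tau(q)$. Then $\|a_n^2-a_n\|\to 0$, so for $n$ sufficiently large $\mathrm{spec}(a_n)\subseteq [0,1/4]\cup[3/4,1]$; continuous functional calculus by a function that is $0$ on $[0,1/2]$ and $1$ on $[1/2,1]$ produces a projection $q_n\in C^*(a_n)\subseteq \mathfrak A(n)$ with $\|q_n-a_n\|\to 0$. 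Setting $q_n=0$ for the finitely many small $n$ yields a sequence $(q_n)_n$ which still lifts $\overline\tau(q)$, and $(q,(q_n)_n)\in \mathfrak A$ is the desired projection lift. Together with the first observation, Lemma~\ref{l:liftingfdquot} produces finite-dimensional $*$-subalgebras $\mathfrak C_k\subseteq\mathfrak A$ with $\mathfrak C_k\cap\mathfrak A(\N)=0$ and $\bigcup_k(\mathfrak C_k+\mathfrak A(\N))$ dense in $\mathfrak A$; an inductive lifting inside corners of $\mathfrak A$ cut by the projections of $\mathfrak C_{k-1}$ (applying the just-established projection lifting within each such corner) allows us to arrange $\mathfrak C_k\subseteq \mathfrak C_{k+1}$.

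Choose a countable dense sequence $\{y_m\}\subseteq\mathfrak A(\N)$ in the unit ball and inductively construct projections $p_k\in\mathfrak A(\N)$ satisfying $[p_k,\mathfrak C_k]=0$ and $\|p_k y_m-y_m\|<1/k$ for $m\leq k$. Writing $E_k=1_{\mathfrak C_k}$ and $\mathfrak C_k=\bigoplus_i M_{r_i}(\C)$ with matrix units $\{f^i_{jl}\}$, every element of the form
\[
p=p^{(0)}+\sum_{i}\sum_{j=1}^{r_i}f^i_{j1}q^{(i)}f^i_{1j},
\]
where $p^{(0)}$ is a projection in $(1-E_k)\mathfrak A(\N)(1-E_k)$ and each $q^{(i)}$ is a projection in $f^i_{11}\mathfrak A(\N)f^i_{11}$, is itself a projection in $\mathfrak A(\N)$ commuting with $\mathfrak C_k$. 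Since each such corner is, via the $c_0$-decomposition, itself a $c_0$-direct sum of real rank zero algebras, the corresponding approximate identities of projections tend strictly to $(1-E_k)$ and $f^i_{11}$ respectively in $M(\mathfrak A(\N))$ (a short calculation using $\|y_n\|\to 0$ for $y\in c_0$). Hence $p\to (1-E_k)+\sum_i e_i=1$ strictly on $\mathfrak A(\N)$ as the components grow, and choosing them sufficiently large yields $p_k$ with the desired properties.

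Finally, for any $a\in\mathfrak A$ and $\epsilon>0$, density produces $k_0$, $c\in \mathfrak C_{k_0}$, and $b\in\mathfrak A(\N)$ with $\|a-c-b\|<\epsilon$; for $k\geq k_0$ the nesting $\mathfrak C_{k_0}\subseteq \mathfrak C_k$ gives $[p_k,c]=0$, and $[p_k,b]\to 0$ as $p_k$ becomes a two-sided approximate identity on $b$. Thus $\|[p_k,a]\|<2\epsilon$ for $k$ large, establishing quasi-diagonality. The main technical challenge is organizing the nested lifts along the AF filtration and simultaneously realizing both the commutation with $\mathfrak C_k$ and the approximate identity property on $\mathfrak A(\N)$ through the hereditary corner decomposition of the real rank zero ideal.
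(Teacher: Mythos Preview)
Your argument is correct, but it follows a genuinely different route from the paper's. The paper exploits the fiber decomposition $\mathfrak A(\N)\cong c_0(\{\mathfrak A(n)\})$ directly: it fixes an increasing approximate identity of projections $\{p_m^n\}_m$ in each $\mathfrak A(n)$, uses compactness of the unit ball of the finite-dimensional image $\pi_n(\mathfrak B_k)$ to pass to subsequences with $\|[p_m^n,\pi_n(x)]\|<1/m$ for $x\in\bigcup_{k\le m}(\mathfrak B_k)_1$, and then sets $p_m=\sum_{n=1}^m p_m^n$. The commutator estimate $\|[p_m,x]\|\le \|x\|/m$ is then read off coordinate-wise, and no nesting of the $\mathfrak B_k$ is ever required---only the containments $\mathfrak B_k+\mathfrak A(\N)\subseteq \mathfrak B_{k+1}+\mathfrak A(\N)$ supplied by Lemma~\ref{l:liftingfdquot}.

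Your approach instead works globally: you build $p_k$ from the matrix-unit structure of $\mathfrak C_k$ so that $[p_k,\mathfrak C_k]=0$ exactly, using approximate identities of projections in the hereditary corners $(1-E_k)\mathfrak A(\N)(1-E_k)$ and $f^i_{11}\mathfrak A(\N)f^i_{11}$. This is more general---it would prove quasi-diagonality for any extension with AF quotient, a real-rank-zero ideal, and projection lifting, without using the $c_0$ structure at all. The price is that you genuinely need the nesting $\mathfrak C_k\subseteq\mathfrak C_{k+1}$ (the weaker conclusion $\mathfrak C_k+\mathfrak A(\N)\subseteq\mathfrak C_{k+1}+\mathfrak A(\N)$ from Lemma~\ref{l:liftingfdquot} would leave an uncontrolled ideal remainder in your final commutator estimate). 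Your one-line justification for the nesting is the right idea---iterating the relative lifting of matrix units in corners cut by $E_{k-1}$, as in Effros \cite[Lemma~9.8]{dim}---but note that this is a strengthening of Lemma~\ref{l:liftingfdquot} rather than a direct application of it. Also, your appeal to the $c_0$ decomposition for the strict convergence of the corner approximate identities is unnecessary: an approximate identity of projections in any hereditary corner $p\mathfrak B p$ converges strictly to $p$ in $\mathcal M(\mathfrak B)$.
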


\begin{proof}
A functional calculus argument implies that every projection in $q_{\infty} ( \{ \mathfrak{A}(n) \} )$ lifts to a projection in $\ell^{\infty}( \{ \mathfrak{A}(n) \} )$.  By Lemma~\ref{l:reducedBusby}, we have that $\mathfrak{A} \cong \mathfrak{A}( \infty ) \oplus _{ \overline{\tau} , \rho } \ell^{ \infty } ( \{ \mathfrak{A}(n) \} )$.  It is now clear that every projection in $\mathfrak{A} (\infty)$ lifts to a projection in $\mathfrak{A}$.  By Lemma~\ref{l:liftingfdquot}, there exists a sequence of finite dimensional sub-$C^{*}$-algebras of $\mathfrak A$, $\{ \mathfrak B_k\}_{k=1}^\infty$, such that $\mathfrak B_k \cap \mathfrak A(\mathbb N)=0$ and $\bigcup_{ k  =1}^{\infty} ( \mathfrak{B}_{k} + \mathfrak{A}(\N) )$ is dense in $\mathfrak{A}$. By the epimorphisms onto the direct summands $\pi_n\colon \mathfrak A \to \mathfrak A(n)$ for $n\in \mathbb N$, we get a $*$-homomorphism $\overline \sigma \colon \mathfrak A \to \ell^\infty( \{ \mathfrak A(n)\})$. Let $\{ p_m^n \}_{m=1}^{\infty}$ be an increasing approximate identity of projections in $\mathfrak A(n)$ for each $n\in \mathbb N$. By passing to subsequences we may assume that
\[
\| p_m^n x - xp_m^n \| < 1/m \text{ for } x\in \pi_n \left(\bigcup_{k=1}^m(\mathfrak B_k)_1\right), n\leq m,
\]
since the closed unit balls $(\mathfrak B_k)_1$ are compact. Define $p_m := \sum_{n=1}^m p_m^n$. Clearly $\{ p_m\}_{m=1}^\infty$ is an approximate identity of projections in $\mathfrak A(\mathbb N)$. We claim that this is quasi-central in $\mathfrak A$. Identifying $\mathfrak A$ with the pull-back $\mathfrak A(\infty)  \oplus_{ \overline \tau, \rho }  \ell^\infty(\{ \mathfrak A(n)\})$, we see that if $x\in \mathfrak A(\mathbb N)$ and $a\in \mathfrak A$, then $xa = x \overline \sigma (a)$ and $ax = \overline \sigma(a) x$. Let $a\in \mathfrak A$. We should show that $\lim_{ m \to \infty } \| p_ma - ap_m \| = 0$.  Let $\epsilon > 0$.  For some large $N$ we may pick $x\in \bigcup_{k=1}^N \mathfrak B_k$ and $y\in \mathfrak A(\mathbb N)$ such that $\| a  - x - y \| < \frac{ \epsilon }{ 4 }$.     

Since $\lim_{ m \to \infty } \| p_my - yp_m \|  = 0$, there exists $N_{2} \geq N$ such that $\| p_{m} y - y p_{m} \| < \frac{ \epsilon }{ 4 }$ for all $m \geq N_2$.  For $m\geq N$ we have
\begin{eqnarray*}
\| p_m x - x p_m \| &=& \| p_m \overline{\sigma}(x) - \overline \sigma(x) p_m \| \\
&=& \max_{n=1,\dots, m} \| p_m^n \pi_n(x) - \pi_n(x) p_m^n\| \leq \|x\|/m.
\end{eqnarray*}
Suppose $m \geq \max \left\{ N_{2} , \frac{ 4 ( \| x \| + 1 ) }{ \epsilon } \right\}$.  Then
\begin{align*}
\| p_{m} a - a p_{m} \| &\leq \| p_{m} ( a - x - y )  \| + \| p_{m} ( x + y ) - ( x + y ) p_{m} \| + \| ( x+y- a ) p_{m} \| \\
				&< \epsilon.	
\end{align*}
Hence, $\lim_{ m \to \infty } \| p_{m} a - a p_{m} \| = 0$.
\end{proof}

\section{Uniqueness Theorem}

In this section, we show that two ``full'' $\widetilde{\N}$-equivariant $*$-homomorphisms from $\mathfrak{A}$ to $\mathfrak{B}$ are approximately unitarily equivalent provided that they agree on ideal-related $K$-theory with coefficient.  Theorem~\ref{t:uniq} will be our key uniqueness result which allows us to use an approximate intertwining argument.  We will also use Theorem~\ref{t:uniq} to lift isomorphisms between ideal-related $K$-theory with coefficient to a ``full'' $\widetilde{\N}$-equivariant $*$-homomorphism (see Theorem~\ref{t:existence}).   

\begin{defin}
An element $a$ in a $C^{*}$-algebra $\mathfrak{A}$ is said to be \emph{full} if the ideal generated by $a$ is $\mathfrak{A}$. 
\begin{itemize}
\item[(1)] Let $\mathfrak{A}$ and $\mathfrak{B}$ be tight $C^{*}$-algebras over $X$.  An $X$-equivariant $*$-homomorphism $\ftn{ \phi }{ \mathfrak{A} }{ \mathfrak{B} }$ is said to be a \emph{full $X$-equivariant $*$-homomorphism} if for all $Y \in \mathbb{LC}(X)$, we have that $\phi_{Y} (a)$ is full in $\mathfrak{B}(Y)$ whenever $a$ is full in $\mathfrak{A}(Y)$.

\item[(2)] Let $\mathfrak{A}$ and $\mathfrak{B}$ be $C^{*}$-algebras.  A $*$-homomorphism $\ftn{ \phi }{ \mathfrak{A} }{ \mathfrak{B}}$ is said to be \emph{full} if for every nonzero $a \in \mathfrak{A}$, we have that $\phi(a)$ is full in $\mathfrak{B}$.
\end{itemize}
\end{defin}

\begin{defin}
Let $\mathfrak{A}$ and $\mathfrak{B}$ be separable $C^{*}$-algebras over $X$.  Two $X$-equivariant $*$-homomorphisms $\ftn{ \phi , \psi }{ \mathfrak{A} }{ \mathfrak{B} }$ are said to be \emph{approximately unitarily equivalent} if there exists a sequence of unitaries $\{ u_{n} \}_{ n = 1}^{ \infty }$ in $\multialg{ \mathfrak{B} }$ such that 
\begin{align*}
\lim_{ n \to \infty } \| u_{n} \phi ( a ) u_{n}^{*} - \psi ( a ) \| = 0 
\end{align*}
for all $a \in \mathfrak{A}$.
\end{defin}

\begin{defin}\label{d:class-uniq-hom}
We will be interested in classes of $C^{*}$-algebras $\mathcal{C}$ satisfying the following property: if $\mathfrak{A}, \mathfrak{B} \in \mathcal{C}$ and $\ftn{ \phi, \psi }{ \mathfrak{A}  }{ \mathfrak{B} \otimes \K }$ are full $*$-homomorphisms such that $\underline{K} ( \phi ) = \underline{K}(\psi)$, then for each non-zero projection $e$ in $\mathfrak{A} $, there exists a sequence of partial isometries $\{ v_{n} \}_{ n = 1}^{\infty}$ in $\mathfrak{B} \otimes \K$ such that $v_{n}^{*} v_{n} = \phi ( e)$, $v_{n} v_{n}^{*} = \psi(e)$, and 
\begin{align*}
\lim_{ n \to \infty } \norm{ v_{n} \phi ( x ) v_{n}^{*} - \psi(x) } = 0
\end{align*}
for all $x \in e \mathfrak{A} e$.
\end{defin}

The class of simple AF-algebras and the class of separable, nuclear, purely infinite simple $C^{*}$-algebras in the bootstrap category $\mathcal{N}$ satisfy the properties of Definition~\ref{d:class-uniq-hom}. This follows e.g. by \cite[Theorem 2.6]{lin:fullexts}.

\begin{theor}\label{t:uniq}
For each $n \in \N$, let $\mathcal{C}_{n}$ be a class of $C^{*}$-algebras satisfying the properties of Definition~\ref{d:class-uniq-hom}.  Let $\mathfrak{A}_{1}$ be a separable $C^{*}$-algebra over $\widetilde{\N}$ with real rank zero such that $\mathfrak{A}_{1} ( \infty )$ is an AF-algebra and $\mathfrak{A}_{1} ( n ) \in \mathcal{C}_{n}$ for each $n \in \N$.  Let $\mathfrak{A}_{2}$ be a separable $C^{*}$-algebra over $\widetilde{\N}$ such that $\mathfrak{A}_{2}$ is a stable $C^{*}$-algebra, and $\mathfrak{A}_{2} (n) \in \mathcal{C}_{n}$ for each $n \in \N$.

If $\ftn{ \phi, \psi }{ \mathfrak{A}_{1} }{ \mathfrak{A}_{2} }$ are $\widetilde{\N}$-equivariant $*$-homomorphisms such that $\underline{K} ( \phi_{ n } ) = \underline{K}( \psi_{n} )$ for all $n \in \N$, $\phi_{n}$ and $\psi_{n}$ are full $*$-homomorphisms, and for each projection $e \in \mathfrak{A}_{1}$, we have that $\phi(e)$ and $\psi(e)$ are Murray-von Neumann equivalent, then $\phi$ and $\psi$ are approximately unitarily equivalent.
\end{theor}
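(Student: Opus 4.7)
The plan is to prove approximate unitary equivalence by showing, for every finite $F \subseteq \mathfrak{A}_1$ and every $\epsilon > 0$, the existence of a unitary $u \in \mathcal{M}(\mathfrak{A}_2)$ with $\|u \phi(x) u^* - \psi(x)\| < \epsilon$ for $x \in F$; separability of $\mathfrak{A}_1$ then yields the statement. The strategy combines the fibre-wise uniqueness provided by the $\mathcal{C}_n$-hypothesis (Definition~\ref{d:class-uniq-hom}) with the quasi-diagonal structure of the extension $0 \to \mathfrak{A}_1(\N) \to \mathfrak{A}_1 \to \mathfrak{A}_1(\infty) \to 0$ afforded by Proposition~\ref{p:quasidiag}.

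First I would set up the approximation framework. Proposition~\ref{p:quasidiag} supplies a quasi-central approximate unit $\{p_m\}_{m \ge 1}$ of projections in $\mathfrak{A}_1(\N)$ of the block form $p_m = \sum_{n=1}^m p_m^n$ with $p_m^n$ a projection in $\mathfrak{A}_1(n)$. Since $\mathfrak{A}_1(\infty)$ is AF and projections lift through the quasi-diagonal extension, Lemma~\ref{l:liftingfdquot} gives finite-dimensional subalgebras $\mathfrak{C}_k \subset \mathfrak{A}_1$ with $\mathfrak{C}_k \cap \mathfrak{A}_1(\N) = 0$ and $\bigcup_k (\mathfrak{C}_k + \mathfrak{A}_1(\N))$ dense in $\mathfrak{A}_1$. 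Given $F$ and $\epsilon$, choose $k$ large enough that every $x \in F$ is within $\epsilon/6$ of some $c_x + b_x$ with $c_x \in \mathfrak{C}_k$ and $b_x \in \mathfrak{A}_1(\N)$, then $m$ large enough that $\|[p_m, c_x]\|$ and $\|b_x - p_m b_x p_m\|$ are small for all $x \in F$; so $x \approx c_x + p_m b_x p_m$.

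Next, for each $n = 1, \ldots, m$, apply Definition~\ref{d:class-uniq-hom} to the full $*$-homomorphisms $\phi_n, \psi_n : \mathfrak{A}_1(n) \to \mathfrak{A}_2(n)$, which agree on $\underline{K}$, with the projection $e = p_m^n$: this produces partial isometries $v^{(n)}$ with $v^{(n)*} v^{(n)} = \phi_n(p_m^n)$, $v^{(n)} v^{(n)*} = \psi_n(p_m^n)$, and $v^{(n)} \phi_n(y) v^{(n)*} \approx \psi_n(y)$ for $y \in p_m^n \mathfrak{A}_1(n) p_m^n$. Since $\mathfrak{A}_2$ is stable and $\mathfrak{A}_2(n)$ is a clopen direct summand, the partial isometries provided in $\mathfrak{A}_2(n) \otimes \K$ can be viewed inside $\mathfrak{A}_2(n) \subseteq \mathfrak{A}_2$ after absorbing the stabilisation. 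Setting $V := \sum_{n=1}^m v^{(n)}$, one obtains $V^* V = \phi(p_m)$, $V V^* = \psi(p_m)$, and, using $V = V \phi(p_m)$, the identity $V \phi(z) V^* = V \phi(p_m z p_m) V^* \approx \psi(p_m z p_m)$ for all $z \in \mathfrak{A}_1$.

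The main obstacle is to extend $V$ to a unitary $u = V + W \in \mathcal{M}(\mathfrak{A}_2)$ with $W = u(1 - \phi(p_m)) = (1 - \psi(p_m)) u$ chosen so that additionally $u \phi(c_x) u^* \approx \psi(c_x)$ for $c_x \in \mathfrak{C}_k$. Stability of $\mathfrak{A}_2$ implies $1 - \phi(p_m) \sim 1 \sim 1 - \psi(p_m)$ in $\mathcal{M}(\mathfrak{A}_2)$, so \emph{some} extension exists; the task is to make a judicious choice. Using quasi-centrality to approximate $(1-p_m) e (1-p_m) \in \mathfrak{A}_1$ by a genuine projection $f_e \in \mathfrak{A}_1$ for each minimal projection $e$ of $\mathfrak{C}_k$, the assumption that $\phi(q) \sim \psi(q)$ for every projection $q \in \mathfrak{A}_1$ supplies Murray-von Neumann equivalences $\phi(f_e) \sim \psi(f_e)$ inside $\mathfrak{A}_2$; a standard finite-dimensional matrix-unit argument then assembles these into a partial isometry $W$ with $W^*W = 1 - \phi(p_m)$ and $W W^* = 1 - \psi(p_m)$ that conjugates the $(1-\phi(p_m))$-compression of $\phi(\mathfrak{C}_k)$ onto the $(1 - \psi(p_m))$-compression of $\psi(\mathfrak{C}_k)$ up to quasi-central error. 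Expanding $u\phi(x)u^*$ via $x \approx p_m x p_m + (1-p_m) x (1-p_m)$, the diagonal pieces $V \phi(p_m x p_m) V^* \approx \psi(p_m x p_m)$ and $W \phi((1-p_m) x (1-p_m)) W^* \approx \psi((1-p_m) x (1-p_m))$ are handled by $V$ and $W$ respectively, while the off-diagonal cross terms $V \phi(x) W^* + W \phi(x) V^*$ are controlled by $\|[p_m, c_x]\|$; combining gives $\|u\phi(x)u^* - \psi(x)\| < \epsilon$ on $F$, as required.
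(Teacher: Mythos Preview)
Your overall strategy coincides with the paper's: split each element of $\mathfrak A_1$ (up to $\epsilon$) into a piece in a finite-dimensional subalgebra lifting a building block of the AF-algebra $\mathfrak A_1(\infty)$ and a piece in $e_k\mathfrak A_1(\N)e_k$ for a large quasi-central projection $e_k$; handle the ideal piece fibre-wise via Definition~\ref{d:class-uniq-hom}; handle the finite-dimensional piece using the Murray--von~Neumann hypothesis on matrix units; and fill in the remaining complement using stability of $\mathfrak A_2$.

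There is, however, a genuine gap in your construction of $W$. You perturb only the \emph{minimal projections} $(1-p_m)e(1-p_m)$ of the compressed finite-dimensional algebra to honest projections $f_e$, and then invoke ``a standard finite-dimensional matrix-unit argument''. That argument (the formula $\sum_{\ell,i}\psi(f_{i1}^\ell)v_\ell\phi(f_{1i}^\ell)$) requires a full system of \emph{matrix units}, not just the diagonal projections: with only the $f_e$ at hand you cannot produce the off-diagonal partial isometries needed to intertwine $\phi$ and $\psi$ on all of $\mathfrak C_k$, and using the raw compressions $(1-p_m)e_{ij}^\ell(1-p_m)$ yields only an approximate partial isometry. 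Moreover, even once a partial isometry $W_0$ is built on the matrix-unit support, its source projection is (approximately) $\phi\bigl((1-p_m)1_{\mathfrak C_k}(1-p_m)\bigr)$, not $1-\phi(p_m)$; your sketch does not say how $W_0$ is extended to the full complement while retaining the conjugation property.

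The paper resolves both issues in one stroke: since the finite-dimensional algebra $\mathfrak B_m$ is semiprojective and $(1-e_k)(\,\cdot\,)(1-e_k)$ is approximately multiplicative on it, one perturbs $\mathfrak B_m$ to a genuine finite-dimensional subalgebra $\mathfrak D\subseteq (1-e_k)\mathfrak A_1(1-e_k)$ with $\mathfrak D\cap\mathfrak A_1(\N)=0$. Now the matrix units $f_{ij}^\ell$ of $\mathfrak D$ are exact, the Murray--von~Neumann hypothesis applies directly to the $f_{11}^\ell$, and the resulting $u_1$ conjugates $\phi|_{\mathfrak D}$ onto $\psi|_{\mathfrak D}$ \emph{exactly}. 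Since $\mathfrak D\perp e_k$, the pieces $u_1$ (on $\mathfrak D$) and $u_2$ (on $e_k\mathfrak A_1(\N)e_k$) are orthogonal partial isometries, and a single filler $u_3$ from stability completes $u_1+u_2$ to a unitary. This eliminates the off-diagonal bookkeeping you carry and closes the gap above; incorporating this semiprojectivity step into your argument would make it go through.
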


\begin{proof}
By Proposition~\ref{p:quasidiag}, we have that $0 \to \mathfrak{A}_{1} ( \N ) \to \mathfrak{A}_{1} \to \mathfrak{A}_{1} ( \infty ) \to 0$ is a quasi-diagonal extension.  Let $\{ e_{k} \}_{ k = 1}^{\infty}$ be an approximate identity consisting of projections of $\mathfrak{A}_{1}(\N)$ such that 
\begin{align*}
\lim_{ n \to \infty } \norm{ e_{k} x - x e_{k} } = 0
\end{align*}
for all $x \in \mathfrak{A}_{1}$.

Since $\mathfrak{A}_{1}(  \infty )$ is an AF-algebra and $\mathfrak{A}_{1}$ has real rank zero, by Lemma~\ref{l:liftingfdquot} there exists a sequence of finite dimensional sub-$C^{*}$-algebras $\{ \mathfrak{B}_{k} \}_{ k = 1}^{ \infty }$ such that $\mathfrak{B}_{k} \cap \mathfrak{A}_{1} ( \N  ) = \{ 0 \}$, $\mathfrak{B}_{k} + \mathfrak{A}_{1} (\N) \subseteq \mathfrak{B}_{k+1} + \mathfrak{A}_{1} (\N)$, and $\bigcup_{ k  =1}^{\infty} ( \mathfrak{B}_{k} + \mathfrak{A}_{1} ( \N ) )$ is dense in $\mathfrak{A}_{1}$.  Let $\epsilon > 0$ and $\mathcal{F}$ be a finite subset of $\mathfrak{A}_{1}$ so that we should find a unitary $u\in \multialg{\mathfrak A_2}$ for which $\| u \phi(a) u^\ast - \psi(a)\| < \epsilon$ for all $a\in \mathcal F$.  Since $\mathfrak{B}_{k} + \mathfrak{A}_{1} (\N) \subseteq \mathfrak{B}_{k+1} + \mathfrak{A}_{1} (\N)$ and $\bigcup_{ k  =1}^{\infty} ( \mathfrak{B}_{k} + \mathfrak{A}_{1} (\N) )$ is dense in $\mathfrak{A}_{1}$, we may assume that there exist $m \in \N$ and a finite subset $\mathcal{G}$ of $\mathfrak{A}_{1} ( \N )$ such that every element of $\mathcal{F}$ is of the form $y_{1} + y_{2}$ where $y_{1}$ is a generator of $\mathfrak{B}_{m}$ and $y_{2} \in \mathcal{G}$.  Since $\mathfrak{B}_{m}$ is a finite dimensional $C^{*}$-algebra (hence semprojective),
\begin{align*}
\lim_{ k \to \infty } \| e_{k} x - x e_{k} \| = 0
\end{align*}
for all $x \in \mathfrak{A}_{1}$, and $\{ e_{k} \}_{k \in \N }$ is an approximate identity for $\mathfrak{A}_{1} ( \N )$ consisting of projections, there exist $k \in \N$, a finite dimensional sub-$C^{*}$-algebra $\mathfrak{D}$ of $\mathfrak{A}_{1}$ with $\mathfrak{D} \subseteq ( 1_{ \multialg{ \mathfrak{A}_{1} } } - e_{k} ) \mathfrak{A}_{1} ( 1_{ \multialg{ \mathfrak{A}_{1} } } - e_{k} )$ and $\mathfrak{D} \cap \mathfrak{A}_{1} ( \N ) = \{ 0 \}$, and there exists a finite subset $\mathcal{H}$ of $e_{k} \mathfrak{A}_{1} ( \N ) e_{k}$ such that for all $x \in \mathcal{F}$, there exist $y_{1} \in \mathfrak{D}$ and $y_{2} \in \mathcal{H}$
\begin{align*}
\| x - ( y_{1} + y_{2} ) \| < \frac{ \epsilon }{ 3 }.
\end{align*} 

Set $\mathfrak{D} = \bigoplus_{ \ell = 1}^{s } \mathsf{M}_{n_{\ell}}$ and let $\{ f_{ij}^{\ell} \}_{ i , j = 1}^{ n_{ \ell }}$ be a system of matrix units for $\mathsf{M}_{n_{ \ell } }$.  By assumption, $\phi ( f_{11}^{\ell} )$ is Murray-von Neumann equivalent to $\psi ( f_{11}^{ \ell } )$.  Hence, there exists $v_{ \ell } \in \mathfrak{A}_{2}$ such that $v_{\ell}^{*} v_{\ell} = \phi ( f_{11}^{\ell} )$ and $v_{\ell} v_{\ell}^{*} = \psi ( f_{11}^{ \ell } )$.  Set 
\begin{align*}
u_{1} = \sum_{ \ell = 1}^{k} \sum_{ i =1}^{n_{ \ell } } \psi ( f_{i1}^{\ell} ) v_{\ell} \phi ( f_{1i}^{\ell} )
\end{align*}  
Then, $u_{1}$ is a partial isometry in $\mathfrak{A}_{1}$ such that $u_{1}^{*} u_{1} = \phi ( 1_{ \mathfrak{D} } )$, $u_{1} u_{1}^{*} = \psi ( 1_{ \mathfrak{D} } )$, and $u_{1} \phi ( x ) u_{1}^{*} = \psi (x)$ for all $x \in \mathfrak{D}$.

Since $e_{k}$ is a projection in $\mathfrak{A}_{1} ( \N )$, we have that $e_{k} = \bigoplus_{ n \in U } e_{k, n}$ for some finite subset $U \subseteq \N$ and $e_{k, n} \neq 0$.  Choose finite subsets $\mathcal{H}_{n}$ of $e_{k, n} \mathfrak{A}_{1} (  n  ) e_{k,n}$ such that $\mathcal{H} \subseteq \bigoplus_{ n \in U } \mathcal{H}_{n}$.  Since $\phi$ and $\psi$ are $\widetilde{\N}$-equivariant $*$-homomorphisms, we have that $\phi_{U} = \bigoplus_{ n \in U } \phi_{n}$ and $\psi_{U} = \bigoplus_{ n \in U } \psi_{ n }$.  By assumption, we have that  $\ftn{ \phi_{ n } , \psi_{ n } }{ \mathfrak{A}_{1} (  n  ) }{ \mathfrak{A}_{2} ( n ) }$ are full $*$-homomorphisms.  Let $\beta_{n}$ be the inclusion of $e_{k,n} \mathfrak{A}_{1} (n) e_{k,n}$ into $\mathfrak{A}_{1}(n)$.  Since $\underline{K} ( \phi_{n} ) = \underline{K}( \psi_{n} )$, we have that $\underline{K}( \phi_{n} \circ \beta_{n} ) = \underline{K} ( \psi_{ n  } \circ \beta_{n})$.  Since $\mathfrak{A}_{1} ( n)$ and $\mathfrak{A}_{2} (n)$ are elements of $\mathcal{C}_{n}$, there exists a partial isometry $v_{n} \in \mathfrak{A}_{2}(n)$ such that $v_{n}^{*} v_{n} = \phi_{ n  }( e_{k,n})$, $v_{n}v_{n}^{*} = \psi_{  n  } ( e_{k,n})$, and 
\begin{align*}
\norm{ v_{n} ( \phi_{n} \circ \beta_{n} )( x ) v_{n}^{*} - ( \psi_{  n  } \circ \beta_{n}) ( x ) } < \frac{ \epsilon }{ 3 }
\end{align*}
for all $x \in \mathcal{H}_{n}$.  Set $u_{2} = \bigoplus_{ n \in U } v_{n}$.  Since $U$ is finite, $u_{2}$ is a partial isometry in $\mathfrak{A}_{2} ( \N )$.  Moreover, $u_{2}^{*}u_{2} = \phi( e_{k} )$, $u_{2}u_{2}^{*} = \psi( e_{k} )$, and 
\begin{align*}
\norm{ u_{2} \phi ( x ) u_{2}^{*} - \psi ( x ) } < \frac{ \epsilon }{ 3 }
\end{align*} 
for all $x \in \mathcal{H}$.  Since $\mathfrak{A}_{2}$ is separable and stable, there exists $u_{3} \in \multialg{ \mathfrak{A}_{2} }$ such that $u_{3}^{*} u_{3} = 1_{ \multialg{ \mathfrak{A}_{2} } } - ( u_{1} + u_{2} )^{*} ( u_{1} + u_{2} )$ and $u_{3} u_{3}^{*} = 1_{ \multialg{ \mathfrak{A}_{2} } } - ( u_{1} + u_{2} )( u_{1} + u_{2} )^{*}$.  Set $u = u_{1} + u_{2} + u_{3} \in \multialg{ \mathfrak{A}_{2} }$.  Then $u$ is a unitary in $\multialg{ \mathfrak{A}_{2} }$.  

Let $x \in \mathcal{F}$.  Choose $y_{1} \in \mathfrak{D}$ and $y_{2} \in \mathcal{H}$ such that $\| x - ( y_{ 1 } + y_{2} ) \| < \frac{ \epsilon }{3} $.  Then
\begin{align*}
\| u \phi ( x ) u^{*} - \psi ( x ) \| &\leq \| u \phi (x) u^{*} - u \phi( y_{1} + y_{2} ) u^{*} \|   \\
				&\qquad + \| u_{1} \phi ( y_{1} ) u_{1} + u_{2} \phi ( y_{2} ) u_{2}^{*} - \psi ( y_{1} ) - \psi ( y_{2} ) \| \\
					&\qquad \qquad  + \| \psi ( y_{1} + y_{2} ) - \psi ( x ) \|   \\
						&< \epsilon.
\end{align*}
It now follows that $\phi$ and $\psi$ are approximately unitarily equivalent since $\mathfrak{A}_{1}$ is separable. 
\end{proof}

\section{Existence Theorem}

\subsection{Asymptotic morphisms}  In this section, we define equivariant $E$-theory as in \cite{mdrm:ethy}.  From now on, let $T = [ 0, \infty )$, $C^{b} ( T , \mathfrak{A} )$ be the $C^{*}$-algebra of all bounded continuous functions from $T$ to $\mathfrak{A}$, and $C_{0} ( T , \mathfrak{A} )$ be the $C^{*}$-algebra of all continuous functions from $T$ to $\mathfrak{A}$ which vanish at $\infty$. 

\begin{defin}
Let $\mathfrak{A}$ and $\mathfrak{B}$ be $C^{*}$-algebras.  An \emph{asymptotic morphism} from $\mathfrak{A}$ to $\mathfrak{B}$ is a map $\ftn{ \phi = ( \phi_{t} )_{t\in T} }{ \mathfrak{A} }{ C^{b} ( T , \mathfrak{B} ) }$ such that the composition
\begin{align*}
\mathfrak{A} \overset{\phi}{\rightarrow} C^{b}( T , \mathfrak{B} ) \twoheadrightarrow \mathfrak{B}_{\infty} := C^{b} ( T , \mathfrak{B} ) / C_{0} ( T , \mathfrak{B} )
\end{align*}
is a $*$-homomorphism.

Suppose $\mathfrak{A}$ and $\mathfrak{B}$ are $C^{*}$-algebras over $X$.  
\begin{itemize}
\item[(1)] An asymptotic morphism from $\mathfrak{A}$ to $\mathfrak{B}$, $( \phi_{t} )_{t\in T}$, is said to be \emph{approximately $X$-equivariant} if for each open set $U$ of $X$
\begin{align*}
\lim_{ t \to \infty } \| \phi_{t}(a) \|_{ X \setminus U } = 0
\end{align*}
for all $a \in \mathfrak{A}(U)$, where $\| b \|_{ X \setminus U }$ is the norm of $b$ in the quotient $\mathfrak{B} ( X ) / \mathfrak{B} (U)$.

\item[(2)] Two asymptotic morphism $\phi_{0}$ and $\phi_{1}$ from $\mathfrak{A}$ to $\mathfrak{B}$ that are approximately $X$-equivariant are said to be \emph{homotopic} if there exists an asymptotic morphism $\Phi$ from $\mathfrak{A}$ to $C([0,1] , \mathfrak{B} )$ that is approximately $X$-equivariant, $\mathrm{ev}_{0} \circ \Phi = \phi_{0}$, and $\mathrm{ev}_{1} \circ \Phi = \phi_{1}$.  The set of homotopy classes of approximately $X$-equivariant asymptotic morphisms from $\mathfrak{A}$ to $\mathfrak{B}$ will be denoted by $[[ \mathfrak{A} , \mathfrak{B} ]]_{X}$.
\end{itemize}
\end{defin}

Recall, that $\Sigma \mathfrak A = C_{0}( \mathbb R) \otimes \mathfrak A$.

\begin{defin}
Let $X$ be a second countable sober space and let $\mathfrak{A}$ and $\mathfrak{B}$ be $C^{*}$-algebras over $X$.  Define 
\begin{align*}
E_{0} ( X ; \mathfrak{A} , \mathfrak{B} ) = [[ \Sigma \mathfrak{A} \otimes \K , \Sigma \mathfrak{B} \otimes \K ]]_{X} \quad \text{and} \quad E_{1} ( X ; \mathfrak{A} ,  \mathfrak{B} ) = E_{0} ( X; \mathfrak{A} , \Sigma \mathfrak{B} ).
\end{align*}
Equipped with the Cuntz sum these sets are abelian groups.
\end{defin}

Let $X$ be totally disconnected, metrizable, compact space.  If $\gamma \in E_{0} ( X ; \mathfrak{A} , \mathfrak{B} )$, then $\gamma$ induces a $C(X, \Lambda )$-homomorphism $\ftn{ \underline{K} ( \gamma ) }{ \underline{K}( \mathfrak{A} ) }{ \underline{ K } ( \mathfrak{B} ) }$ and a $C(X, \Z )$-homomorphism $\ftn{ K_{*} ( \gamma ) }{ K_{*}( \mathfrak{A} ) }{ K_{*} ( \mathfrak{B} ) }$.

\subsection{Lifting isomorphisms of ideal-related $K$-theory with coefficients}

The following lemma shows that two elements in $E( \widetilde{\N} , \mathfrak{A} , \mathfrak{B} )$ induce the same $C( \widetilde{\N} , \Z )$-homomorphism provided that the induced elements in $E( \mathfrak{A}(n) , \mathfrak{B} (n) )$ are equal for all $n \in \widetilde{\N}$.

\begin{lemma}\label{l:ethyequal}
Let $\mathfrak{A}$ and $\mathfrak{B}$ be separable, nuclear $C^{*}$-algebras over $\widetilde{\N}$ and $\alpha, \beta \in E( \widetilde{\N} , \mathfrak{A} , \mathfrak{B} )$.  Suppose that $\alpha_{n} = \beta_{n}$ in $E(\mathfrak A(n) , \mathfrak B(n) )$ for all $n \in \widetilde{\N}$.  Then the $C( \widetilde{\N}, \Z )$-homomorphisms $K_{*} ( \alpha )$ and $K_{*} ( \beta )$ are equal.  
\end{lemma}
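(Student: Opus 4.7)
Fix $x \in K_{*}(\mathfrak A)$ and set $w := K_{*}(\alpha)(x) - K_{*}(\beta)(x)$; the plan is to show that $w = 0$, as this establishes equality of the two $C(\widetilde{\N},\Z)$-homomorphisms.

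First I would control $w$ on the $\infty$-fiber. The quotient $\pi_\infty \colon \mathfrak A \to \mathfrak A(\infty)$ is $\widetilde{\N}$-equivariant, so naturality of equivariant $E$-theory gives $K_{*}(\pi_\infty^{\mathfrak B}) \circ K_{*}(\alpha) = K_{*}(\alpha_\infty) \circ K_{*}(\pi_\infty^{\mathfrak A})$ and the analogous identity for $\beta$. Since $\alpha_\infty = \beta_\infty$ in $E(\mathfrak A(\infty),\mathfrak B(\infty))$, the two right-hand sides agree, so $K_{*}(\pi_\infty^{\mathfrak B})(w) = 0$. By exactness of the six-term sequence attached to $0 \to \mathfrak B(\N) \to \mathfrak B \to \mathfrak B(\infty) \to 0$, there exists $z \in K_{*}(\mathfrak B(\N))$ with $K_{*}(\iota^{\mathfrak B})(z) = w$.

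Next I would use the clopen structure of $\N \subseteq \widetilde{\N}$ to analyse $w$ fiberwise. For each $n \in \N$ the singleton $\{n\}$ is clopen, so $\chi_{\{n\}} \in C(\widetilde{\N},\Z)$, one has the natural direct-summand decomposition $\mathfrak B = \mathfrak B(n) \oplus \mathfrak B(\widetilde{\N}\setminus\{n\})$, and multiplication by $\chi_{\{n\}}$ on $K_{*}(\mathfrak B)$ is precisely $K_{*}(\iota_n^{\mathfrak B}) \circ K_{*}(\pi_n^{\mathfrak B})$. Combining $C(\widetilde{\N},\Z)$-linearity of $K_{*}(\alpha)$ and $K_{*}(\beta)$ with naturality at $\pi_n$ and $\iota_n$, together with $\alpha_n = \beta_n$, yields
\[
\chi_{\{n\}} w = K_{*}(\iota_n^{\mathfrak B}) \bigl( K_{*}(\alpha_n) - K_{*}(\beta_n) \bigr) \bigl( K_{*}(\pi_n^{\mathfrak A})(x) \bigr) = 0.
\]

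To conclude, I would decompose $z$ using continuity of $K$-theory. Because $\psi^{*}$ respects arbitrary suprema, $\mathfrak B(\N) = \overline{\bigcup_{F} \mathfrak B(F)}$ with $F$ running over finite subsets of $\N$, and $\mathfrak B(F) = \bigoplus_{n\in F} \mathfrak B(n)$. Continuity of $K$-theory under inductive limits then yields $K_{*}(\mathfrak B(\N)) = \bigoplus_{n \in \N} K_{*}(\mathfrak B(n))$, and we may write $z = \sum_{n \in F_0} z_n$ as a finite algebraic sum with $z_n \in K_{*}(\mathfrak B(n))$. For each $m \in \N$ the composition $\pi_m^{\mathfrak B} \circ \iota_n^{\mathfrak B}$ equals the identity for $n=m$ and vanishes otherwise, so $\chi_{\{m\}} w = K_{*}(\iota_m^{\mathfrak B})(z_m)$; since $K_{*}(\iota_m^{\mathfrak B})$ is split injective and $\chi_{\{m\}} w = 0$, this forces $z_m = 0$ for every $m$, whence $z = 0$ and $w = 0$. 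The main bookkeeping obstacle is verifying the naturality statements for equivariant $E$-theory classes and confirming that the $C(\widetilde{\N},\Z)$-module structure on $K_{*}$ is implemented by the $\widetilde{\N}$-equivariant idempotents $\iota_n \pi_n$; once that is settled, the rest of the argument is purely algebraic.
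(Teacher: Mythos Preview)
Your argument is correct and is genuinely different from the paper's. The paper works at the level of the $E(\widetilde{\N};-,-)$ groups themselves: it sets up a $3\times 4$ grid of long exact sequences coming from the extensions $0\to\mathfrak A(\N)\to\mathfrak A\to\mathfrak A(\infty)\to 0$ and $0\to\mathfrak B(\N)\to\mathfrak B\to\mathfrak B(\infty)\to 0$, and shows that $\alpha-\beta$ lifts to an element of $E(\widetilde{\N};\iota_\infty(\mathfrak A(\infty)),\iota_{\N}(\mathfrak B(\N)))$. To do this it needs to know that two of the corner groups vanish, which it deduces via the identification $E(\widetilde{\N};\iota_{\N}(\mathfrak A(\N)),-)\cong\kk(\widetilde{\N};\iota_{\N}(\mathfrak A(\N)),-)$ for continuous algebras (this is where nuclearity enters) and an adjunction formula from \cite{rmrn:bootstrap}. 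The conclusion then follows because $\Hom_{C(\widetilde{\N},\Z)}(K_\ast(\iota_\infty(\mathfrak A(\infty))),K_\ast(\iota_{\N}(\mathfrak B(\N))))=0$, as the two modules live over disjoint pieces of $\widetilde{\N}$.

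Your approach instead fixes a single class $x\in K_\ast(\mathfrak A)$ and chases $w=K_\ast(\alpha)(x)-K_\ast(\beta)(x)$ directly: the hypothesis at $\infty$ pushes $w$ down into the image of $K_\ast(\mathfrak B(\N))$, the hypotheses at each finite $n$ kill the clopen cut-downs $\chi_{\{n\}}w$, and the identification $K_\ast(\mathfrak B(\N))=\bigoplus_n K_\ast(\mathfrak B(n))$ together with split injectivity of each $K_\ast(\iota_n^{\mathfrak B})$ forces the lift to vanish componentwise. This is more elementary---it uses neither the $E\cong\kk$ comparison theorem nor the adjunction, and in fact makes no essential use of nuclearity. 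The naturality statements you flag as the ``bookkeeping obstacle'' are exactly what is needed to define $\alpha_n$ and the $C(\widetilde{\N},\Z)$-module structure in the first place, so you are not assuming anything beyond the existing setup. The trade-off is that the paper's route yields more: it locates $\alpha-\beta$ itself (not just its effect on $K$-theory) inside a specific $E$-group, which is structurally informative even if not needed for the lemma as stated.
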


\begin{proof}
Note that the diagram 
\begin{align*}
\xymatrix{
E( \widetilde{\N} ; \iota_{\N} ( \mathfrak{A} (\N) ) , \Sigma\iota_{\N} ( \mathfrak{B} ( \N ) ) ) \ar[r] \ar[d] & E( \widetilde{\N} ; \iota_{\N} ( \mathfrak{A} (\N) ) , \Sigma \mathfrak{B}  ) \ar[r] \ar[d] & E( \widetilde{\N} ;\iota_{\N} ( \mathfrak{A} (\N) ) , \Sigma \iota_{\infty} ( \mathfrak{B} (\infty )  ) )  \ar[d] \\
E( \widetilde{\N} ; \iota_{\infty} ( \mathfrak{A}( \infty ) ) , \iota_{\N} ( \mathfrak{B} ( \N ) ) ) \ar[r] \ar[d] & E( \widetilde{\N} ; \iota_{\infty} ( \mathfrak{A}( \infty ) ) , \mathfrak{B}  ) \ar[r] \ar[d] & E( \widetilde{\N} ; \iota_{\infty} ( \mathfrak{A}( \infty ) ) , \iota_{\infty} ( \mathfrak{B} (\infty )  ) ) \ar[d] \\ 
E( \widetilde{\N} ; \mathfrak{A} , \iota_{\N} ( \mathfrak{B} ( \N ) ) ) \ar[r] \ar[d] & E( \widetilde{\N} ; \mathfrak{A} , \mathfrak{B}  ) \ar[r] \ar[d] & E( \widetilde{\N} ;\mathfrak{A} , \iota_{\infty} ( \mathfrak{B} (\infty )  ) ) \ar[d] \\
E( \widetilde{\N} ; \iota_{\N} ( \mathfrak{A} (\N) ) , \iota_{\N} ( \mathfrak{B} ( \N ) ) ) \ar[r] & E( \widetilde{\N} ; \iota_{\N} ( \mathfrak{A} (\N) ) , \mathfrak{B}  ) \ar[r] & E( \widetilde{\N} ;\iota_{\N} ( \mathfrak{A} (\N) ) , \iota_{\infty} ( \mathfrak{B} (\infty )  ) ) 
}
\end{align*}
is commutative, and the rows and columns are exact sequences. 

Since $\alpha_{n} = \beta_{n}$ for all $n \in \N$, we have that $\alpha - \beta$ is in the image of the homomorphism from $E( \widetilde{\N} , \iota_{\infty} ( \mathfrak{A}( \infty ) ) , \mathfrak{B}  )$ to $E( \widetilde{\N} , \mathfrak{A} , \mathfrak{B}  )$.  Let $y$ be an element in $E( \widetilde{\N} , \iota_{\infty} ( \mathfrak{A}( \infty ) ) , \mathfrak{B}  )$  which is mapped to $\alpha - \beta$.  

Since $\iota_{\N} ( \mathfrak{A} ( \N ) )$ is a continuous $C^{*}$-algebra over $\widetilde{\N}$, by \cite[Theorem~5.4]{mdrm:ethy}, we have that $\kk ( \widetilde{\N} , \iota_{\N} ( \mathfrak{A} ( \N ) ),  \mathfrak{C} ) \cong E( \widetilde{\N} , \iota_{\N} ( \mathfrak{A} ( \N ) ), \mathfrak{C} )$ for any separable $C^{*}$-algebra $\mathfrak{C}$ over $\widetilde{\N}$.  Let $\mathfrak{D}$ be a $C^{*}$-algebra.  Then, by \cite[Proposition~3.12]{rmrn:bootstrap}, 
\begin{align*}
\kk( \widetilde{\N} , \iota_{\N} ( \mathfrak{A} ( \N ) ) , \iota_{\infty}( \mathfrak{D} ) ) \cong \kk ( \N , \mathfrak{A} ( \N ) , r_{ \widetilde{\N} }^{ \N } ( \iota_{\infty}( \mathfrak{D} )  ) ) = \kk ( \N , \mathfrak{A} ( \N ), 0 ) = 0.
\end{align*}  
Hence, $E( \widetilde{\N} ,\iota_{\N} ( \mathfrak{A} (\N) ) , \iota_{\infty} ( \mathfrak{B} (\infty )  ) ) = E( \widetilde{\N} ,\iota_{\N} ( \mathfrak{A} (\N) ) , \Sigma \iota_{\infty} ( \mathfrak{B} (\infty )  ) ) = 0$.  This implies that the homomorphism from $E( \widetilde{\N} ,\iota_{\infty} ( \mathfrak{A}(\infty) ) , \iota_{\infty} ( \mathfrak{B} (\infty )  ) )$ to $E( \widetilde{\N} , \mathfrak{A}  , \iota_{\infty} ( \mathfrak{B} (\infty )  ) )$ is an isomorphism.  Since $\alpha_\infty = \beta_{\infty}$, we have that $y$ is in the image of the homomorphism from $E( \widetilde{\N} , \iota_{\infty} ( \mathfrak{A}( \infty ) ) , \iota_{\N} ( \mathfrak{B} ( \N ) ) )$ to $E( \widetilde{\N} , \iota_{\infty} ( \mathfrak{A}( \infty ) ) , \mathfrak{B}  )$.        

Let $z$ be a lifting of $y$.   Note that $\Hom_{ C( \widetilde{\N} , \Z ) } ( K_{*} ( \iota_{ \infty } ( \mathfrak{A} (\infty) ) ), K_{*} (\iota_{\N} ( \mathfrak{B} ( \N ) ) )) = 0$.  Hence, $K_{*}(z)$ is zero which implies that the homomorphism on $K$-theory induced by $y$ is zero.  Therefore, $K_{*} ( \alpha ) - K_{*} ( \beta ) = 0$.
\end{proof}

\begin{lemma}\label{l:unitaryequiv}
Let $\mathfrak{A}$ be a finite dimensional $C^{*}$-algebra and let $\{ \mathfrak{B}_{n} \}_{n = 1}^{\infty}$ be a sequence of separable, stable $C^{*}$-algebras such that each $\mathfrak{B}_{n}$ has weak cancellation.  Suppose that $\ftn{ \phi , \psi }{ \mathfrak{A} }{ q_{\infty} ( \{ \mathfrak{B}_{n} \} ) }$ are $*$-homomorphisms such that 
\begin{itemize}
\item[(1)] $K_{0} ( \phi ) = K_{0}( \psi )$;

\item[(2)] for each nonzero projection $p \in \mathfrak{A}$, there exist a projection $q = \{ q_{n} \}_{ n  =1}^{\infty} \in \ell^{\infty} ( \{ \mathfrak{B}_{n} \} )$ and an $N \in \N$ such that $q_{n}$ is full in $\mathfrak{B}_{n}$ for all $n \geq N$ and $\rho ( q ) = \phi ( p )$; and 

\item[(3)] for each nonzero projection $p \in \mathfrak{A}$, there exist a projection $q = \{ q_{n} \}_{ n  =1}^{\infty} \in \ell^{\infty} ( \{ \mathfrak{B}_{n} \} )$ and an $N \in \N$ such that $q_{n}$ is full in $\mathfrak{B}_{n}$ for all $n \geq N$ and $\rho ( q ) = \psi ( p )$.
\end{itemize}
Then there exist $*$-homomorphisms $\ftn{\widetilde{\phi}_{n} , \widetilde{\psi}_{n} }{ \mathfrak{A} }{ \mathfrak{B}_{n} }$ such that $\ell^{\infty} ( \{ \widetilde{\phi}_{n} \} )$ and $\ell^{\infty} ( \{ \widetilde{\psi}_{n} \} )$ are liftings of $\phi$ and $\psi$ respectively and there exists a unitary $u = \{ u_{n} \}_{ n =1}^{\infty}$ in $\ell^{\infty} (\{  \multialg{ \mathfrak{B}_{n} } \} )$ such that
$u_{n} \widetilde{ \phi }_{n} (a) u_{n}^{*} = \widetilde{ \psi }_{n} (a)$ for all $a \in \mathfrak{A}$.
\end{lemma}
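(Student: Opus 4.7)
The plan is to lift $\phi$ and $\psi$ to sequences of $*$-homomorphisms $\widetilde \phi_n, \widetilde \psi_n \colon \mathfrak A \to \mathfrak B_n$ and then, for all sufficiently large $n$, produce a partial isometry $v_n \in \mathfrak B_n$ conjugating $\widetilde \phi_n$ onto $\widetilde \psi_n$. Stability of $\mathfrak B_n$ will then allow me to extend $v_n$ to a unitary in $\multialg{\mathfrak B_n}$. Write $\mathfrak A = \bigoplus_{\ell=1}^s \mathsf{M}_{n_\ell}$ with matrix units $\{ f_{ij}^\ell \}$.

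First I would use hypothesis~(2) to choose a projection lift $\{ p_n^\ell \}_{n=1}^\infty \in \ell^\infty(\{\mathfrak B_n\})$ of $\phi(f_{11}^\ell)$ that is full in $\mathfrak B_n$ for all sufficiently large $n$, and hypothesis~(3) to analogously choose a projection lift $\{ q_n^\ell \}_{n=1}^\infty$ of $\psi(f_{11}^\ell)$. Because the projections $\phi(f_{11}^\ell)$ are pairwise orthogonal for different $\ell$ in $q_\infty(\{\mathfrak B_n\})$, a standard functional calculus perturbation within $c_0(\{\mathfrak B_n\})$ lets me assume that the lifts $\{ p_n^\ell \}_\ell$ are pairwise orthogonal in $\ell^\infty(\{\mathfrak B_n\})$, and likewise for the $\{ q_n^\ell \}_\ell$. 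Lifting each partial isometry $f_{i1}^\ell$ to a sequence of partial isometries with prescribed source $p_n^\ell$ (respectively $q_n^\ell$) via coordinatewise polar decomposition, I obtain $*$-homomorphism lifts $\widetilde \phi_n, \widetilde \psi_n \colon \mathfrak A \to \mathfrak B_n$ with $\widetilde \phi_n(f_{11}^\ell) = p_n^\ell$ and $\widetilde \psi_n(f_{11}^\ell) = q_n^\ell$.

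Next, since $K_0(\phi) = K_0(\psi)$, we have $[\phi(f_{11}^\ell)]_0 = [\psi(f_{11}^\ell)]_0$ in $K_0(q_\infty(\{\mathfrak B_n\}))$. Lifting the implementing partial isometry from a matrix amplification of $q_\infty(\{\mathfrak B_n\})$ and adjusting by functional calculus in each coordinate shows that $p_n^\ell$ and $q_n^\ell$ are Murray--von Neumann equivalent in a matrix algebra over $\mathfrak B_n$ for all large $n$; stability of $\mathfrak B_n$ then gives $[p_n^\ell]_0 = [q_n^\ell]_0$ in $K_0(\mathfrak B_n)$. For $n$ large enough that both $p_n^\ell$ and $q_n^\ell$ are full, weak cancellation of $\mathfrak B_n$ yields a partial isometry $w_n^\ell \in \mathfrak B_n$ with $(w_n^\ell)^\ast w_n^\ell = p_n^\ell$ and $w_n^\ell (w_n^\ell)^\ast = q_n^\ell$. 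Setting
\[
v_n := \sum_{\ell=1}^s \sum_{i=1}^{n_\ell} \widetilde \psi_n(f_{i1}^\ell)\, w_n^\ell\, \widetilde \phi_n(f_{1i}^\ell),
\]
the matrix unit relations give $v_n^\ast v_n = \widetilde \phi_n(1_{\mathfrak A})$, $v_n v_n^\ast = \widetilde \psi_n(1_{\mathfrak A})$ and $v_n \widetilde \phi_n(a) v_n^\ast = \widetilde \psi_n(a)$ for every $a \in \mathfrak A$.

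Finally, since $\mathfrak B_n$ is stable, both $1_{\multialg{\mathfrak B_n}} - \widetilde \phi_n(1_{\mathfrak A})$ and $1_{\multialg{\mathfrak B_n}} - \widetilde \psi_n(1_{\mathfrak A})$ are Murray--von Neumann equivalent to $1_{\multialg{\mathfrak B_n}}$, which allows me to extend $v_n$ to a unitary $u_n \in \multialg{\mathfrak B_n}$. For the finitely many small $n$ at which fullness or the $K_0$ comparison fails, I would simply redefine $\widetilde \phi_n$ and $\widetilde \psi_n$ to be the zero map (still a valid lift modulo $c_0(\{\mathfrak B_n\})$) and take $u_n = 1_{\multialg{\mathfrak B_n}}$. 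The main obstacle I expect is precisely in the $K_0$ bookkeeping: carefully tracing how the equality in $K_0(q_\infty(\{\mathfrak B_n\}))$, combined with stability and fullness in $\mathfrak B_n$, forces $[p_n^\ell]_0 = [q_n^\ell]_0$ in $K_0(\mathfrak B_n)$ for all sufficiently large $n$, where one has to pass the implementing partial isometry from a matrix amplification of the quotient down to a genuine Murray--von Neumann equivalence inside each $\mathfrak B_n$.
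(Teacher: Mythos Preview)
Your proposal is correct and follows essentially the same route as the paper: lift $\phi$ and $\psi$ coordinatewise to $*$-homomorphisms $\widetilde\phi_n,\widetilde\psi_n$, arrange that $[\widetilde\phi_n(f_{11}^\ell)]=[\widetilde\psi_n(f_{11}^\ell)]$ in $K_0(\mathfrak B_n)$ for large $n$, invoke weak cancellation (using fullness) to obtain Murray--von Neumann equivalences, assemble the intertwining partial isometry from matrix units exactly as you wrote, extend to a unitary via stability, and zero out the finitely many bad coordinates.

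The one point of divergence is the step you flag as the ``main obstacle.'' The paper bypasses your hands-on lifting of partial isometries by first observing that stability together with the existence of full projections forces
\[
K_0\bigl(q_\infty(\{\mathfrak B_n\})\bigr)\;\cong\;\frac{\prod_{n} K_0(\mathfrak B_n)}{\bigoplus_{n} K_0(\mathfrak B_n)}
\]
via the coordinate projections; then $K_0(\phi)=K_0(\psi)$ immediately says that any two lifts have $K_0(\widetilde\phi_n)=K_0(\widetilde\psi_n)$ for all but finitely many $n$, and one can even adjust the lifts so that equality holds everywhere. This is cleaner than chasing an implementing partial isometry through matrix amplifications, but your argument is perfectly valid and more elementary in that it does not require the $K_0$ computation for $q_\infty$.
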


\begin{proof}
By (2) we may assume that $\mathfrak B_{n}$ has a full projection for each $n$. Hence $\ell^\infty(\{ \mathfrak B_{n}\})$ has an approximate identity of projection, since each $\mathfrak B_{n}$ is separable. Thus since each $\mathfrak{B}_{n}$ is stable, we have that $K_{0} ( q_{\infty} ( \{ \mathfrak{B}_{n} \} ) ) \cong \frac{ \prod_{ n = 1}^{\infty} K_{0} ( \mathfrak{B}_{n} ) }{ \bigoplus_{n = 1}^{\infty} K_{0} ( \mathfrak{B}_{n} ) }$ where the isomorphism is induced by the coordinate projections.  Using this identification, the fact that $\mathfrak{A}$ is finite dimensional, and assumptions (1), (2), (3), there exist $*$-homomorphisms $\ftn{ \widetilde{\phi}, \widetilde{\psi} }{ \mathfrak{A} }{ \ell^{\infty} ( \{ \mathfrak{B}_{n} \} ) }$ and there exists $N \in \N$ such that $\rho \circ \widetilde{ \phi } = \phi$, $\rho  \circ \widetilde{\psi} = \psi$, $K_{0} ( \widetilde{ \phi } ) = K_{0} ( \widetilde{ \psi } )$, and for all $n \geq N$ and for every nonzero projection $p \in \mathfrak{A}$, we have that the $n$th coordinate of $\widetilde{\phi}(p)$ and $\widetilde{\psi}(p)$ are full projections in $\mathfrak{B}_{n}$.

Note that $\widetilde{ \phi } = \ell^{\infty }( \{ \widetilde{ \phi }_{n} \} )$ and $\widetilde{ \psi } = \ell^{\infty}( \{ \widetilde{ \psi }_{n} \} )$, where $\ftn{ \widetilde{ \phi }_{n} , \widetilde{ \psi }_{n} }{ \mathfrak{A} }{ \mathfrak{B}_{n} }$ are $*$-homomorphisms.  By construction, for each nonzero projection $p \in \mathfrak{A}$, we have that $\widetilde{ \phi }_{n} (p)$ and $\widetilde{ \psi }_{ n } (p)$ are full projections in $\mathfrak{B}_{n}$ for all $n \geq N$ and $K_{0} ( \widetilde{ \phi }_{n} ) = K_{0} ( \widetilde{ \phi }_{n} )$.  

Let $\mathfrak{A} = \bigoplus_{ k = 1}^{m} \mathsf{M}_{n(k)}$.  Let $\{ e_{ij}^{k} \}$ be a system of matrix units for $\mathsf{M}_{n(k)}$.  Let $n \geq N$.  Since $\widetilde{ \phi }_{n} ( e_{11}^{k} )$ and $\widetilde{ \psi }_{ n } ( e_{11}^{k} )$ are full projections in $\mathfrak{B}_{n}$, $[ \widetilde{ \phi }_{n} ( e_{11}^{k} ) ] = [ \widetilde{ \psi }_{n} ( e_{11}^{k} ) ]$, and $\mathfrak{B}_{n}$ is a stable $C^{*}$-algebra with weak cancellation, we have that there exists $v_{n,k} \in \mathfrak{B}_{n}$ such that $v_{n,k}^{*} v_{n,k} =\widetilde{ \phi }_{n} ( e_{11}^{k} )$ and $v_{n,k} v_{n,k}^{*} =\widetilde{ \psi }_{n} ( e_{11}^{k} )$.  

Set 
\begin{align*}
v_{n} = \sum_{ k = 1}^{m} \sum_{ i = 1}^{n(k)} \widetilde{ \psi }_{n} (  e_{i1}^{k} ) v_{n,k} \widetilde{\phi}_{n} ( e_{1i}^{k} ).
\end{align*}  
Then $v_{n}$ is a partial isometry in $\mathfrak{B}_{n}$ such that $v_{n}^{*} v_{n} = \widetilde{ \phi }_{n} ( 1_{ \mathfrak{A} } )$, $v_{n} v_{n}^{*} = \widetilde{ \psi }_{n} ( 1_{ \mathfrak{A} } )$, and 
\begin{align*}
v_{n} \widetilde{\phi}_{n} (x) v_{n}^{*} = \widetilde{\psi}_{n} (x)
\end{align*}
for all $x \in \mathfrak{A}$.  Since $\mathfrak{B}_{n}$ is a separable, stable $C^{*}$-algebra, there exists a partial isometry $w_{n} \in \multialg{ \mathfrak{B}_{n} }$ such that $w_{n}^{*} w_{n} = 1_{ \multialg{ \mathfrak{B}_{n} } } - \widetilde{\phi}_{n} ( 1_{\mathfrak{A}} )$ and $w_{n}w_{n}^{*} = 1_{ \multialg{ \mathfrak{B}_{n} } } - \widetilde{\psi}_{n} ( 1_{\mathfrak{A}} )$.  Then $u_{n} = v_{n} + w_{n}$ is a unitary in $\multialg{ \mathfrak{B}_{n} }$ such that 
\begin{align*}
\mathrm{Ad} ( u_{n} ) \circ \widetilde{\phi}_{n}  = \widetilde{\psi}_{n}.
\end{align*}
Set $u_{n} = 1_{ \multialg{\mathfrak{B}_{n} } }$ and redefining $\widetilde{\phi}_{n} = \widetilde{ \psi }_{n} = 0$ for $1 \leq n < N$, we get the desired result.
\end{proof}

\begin{lemma}\label{l:fullelement}
Let $\mathfrak{A}$ be a tight $C^{*}$-algebra over $\widetilde{\N}$.  For each open subset $U \subseteq \widetilde{\N}$, we have that $a \in \mathfrak{A}(U)$ is full if and only if $\pi_{n} ( a ) \neq 0$ for all $n \in U$.
\end{lemma}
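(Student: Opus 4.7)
The proof has two directions and both rest on the same pair of observations: since $\mathfrak A$ is tight over $\widetilde{\N}$ the lattice map $\mathbb O(\widetilde{\N}) \to \mathbb I(\mathfrak A)$, $U \mapsto \mathfrak A(U)$, is a bijection that respects arbitrary infima and suprema, and the kernel of $\pi_n \colon \mathfrak A \to \mathfrak A(n)$ is exactly $\mathfrak A(\widetilde{\N} \setminus \{n\})$ (the primitive ideal corresponding to the point $n$). The latter is what makes $\pi_n$ the right object to detect membership in proper sub-ideals. Because $\widetilde{\N}$ is Hausdorff, every singleton $\{n\}$ with $n \in \widetilde{\N}$ is closed, so $U \setminus \{n\}$ is open whenever $n \in U$.

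For the forward direction, the plan is to argue by contrapositive. Suppose $\pi_n(a) = 0$ for some $n \in U$. Then $a$ lies in $\mathfrak A(U) \cap \ker \pi_n = \mathfrak A(U) \cap \mathfrak A(\widetilde{\N} \setminus \{n\})$, and using that the lattice isomorphism preserves intersections this equals $\mathfrak A(U \setminus \{n\})$. Since $n \in U$, the open set $U \setminus \{n\}$ is strictly smaller than $U$, and injectivity of the lattice map gives $\mathfrak A(U \setminus \{n\}) \subsetneq \mathfrak A(U)$. The closed ideal of $\mathfrak A(U)$ generated by $a$ is then contained in this proper ideal, contradicting fullness.

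For the reverse direction, suppose $\pi_n(a) \neq 0$ for every $n \in U$. Let $\mathfrak J$ be the closed ideal of $\mathfrak A(U)$ generated by $a$. Tightness restricted to the open subspace $U$ (the primitive ideal space of $\mathfrak A(U)$ is canonically $U$) identifies $\mathbb I(\mathfrak A(U))$ with $\mathbb O(U)$, so $\mathfrak J = \mathfrak A(V)$ for some open $V \subseteq U$. If $V \subsetneq U$, pick $n \in U \setminus V$; then $V \subseteq \widetilde{\N} \setminus \{n\}$, hence $a \in \mathfrak A(V) \subseteq \mathfrak A(\widetilde{\N} \setminus \{n\}) = \ker \pi_n$, contradicting $\pi_n(a) \neq 0$. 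Therefore $V = U$ and $\mathfrak J = \mathfrak A(U)$.

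There is no serious obstacle here; the only thing to be careful about is using tightness twice in the correct way, namely to identify $\ker \pi_n = \mathfrak A(\widetilde{\N} \setminus \{n\})$ on one hand and to identify the ideals of the corner $\mathfrak A(U)$ with the open subsets of $U$ on the other. Once these identifications are in place, the two directions are essentially one-line contrapositives.
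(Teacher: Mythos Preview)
Your proof is correct and takes a genuinely different route from the paper's. The paper invokes Lemma~\ref{l:reducedBusby} to identify $\mathfrak{A}$ with the pullback $\mathfrak{A}(\infty) \oplus_{\overline{\tau},\rho} \ell^\infty(\{\mathfrak{A}(n)\})$, writes out $\mathfrak{A}(U)$ explicitly as a set of sequences (separating the cases $\infty \in U$ and $\infty \notin U$), and then reads off the fullness criterion by inspection. Your argument, by contrast, works entirely at the level of the lattice isomorphism $\mathbb{O}(\widetilde{\N}) \cong \mathbb{I}(\mathfrak{A})$, using only that $\ker \pi_n = \mathfrak{A}(\widetilde{\N}\setminus\{n\})$, that $\psi^*$ preserves finite intersections, and that $\mathrm{Prim}(\mathfrak{A}(U))$ is canonically the open subspace $U$. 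This is more elementary in the sense that it does not rely on the reduced Busby picture, and it is more general: nothing in your argument uses the specific structure of $\widetilde{\N}$ beyond the fact that singletons are closed, so the same reasoning shows the analogous statement for a tight $C^*$-algebra over any $T_1$ space. The paper's concrete description, on the other hand, integrates more naturally with the rest of the section, where the pullback identification is used repeatedly.
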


\begin{proof}
By Lemma \ref{l:reducedBusby} we may assume that $\mathfrak{A} = \mathfrak{A} ( \infty ) \oplus_{ \overline{\tau} , \rho } \ell^{ \infty } ( \{ \mathfrak{A} (n) \} )$, where $\overline{\tau}$ is the reduced Busby map. Let $U$ be an open subset of $\widetilde{\N}$.  If $\infty \notin U$, then  
\begin{align*}
\mathfrak{A}(U) = \setof{ ( 0 , \{ x_{n} \}_{n = 1}^{\infty} ) \in \mathfrak{A} }{ \text{$x_{n} = 0$ if $n \notin U$}}.
\end{align*}
If $\infty \in U$, then  
\begin{align*}
\mathfrak{A}(U) = \setof{ ( x_\infty , \{ x_{n} \}_{n = 1}^{\infty} ) \in \mathfrak{A} }{ \text{$x_\infty \in \mathfrak{A}(\infty)$ and $x_{n} = 0$ if $n \notin U$}}.
\end{align*}
It is now clear that $(x_{\infty} , \{ x_{n} \}_{ n = 1}^{\infty} ) \in \mathfrak{A}(U)$ is full if and only if $x_{n} \neq 0$ for all $n \in U$.  

\end{proof}

\begin{lemma}\label{l:fullhomo}
Let $\mathfrak{A}$ be a continuous $C^{*}$-algebra over $\widetilde{\N}$ and let $\mathfrak{B}$ be a tight $C^{*}$-algebra over $\widetilde{\N}$.  Suppose for each $n \in \N$, there exist an injective $*$-homomorphism $\ftn{ \phi_{n} }{ \mathfrak{A} ( n ) }{ \mathfrak{B} ( n ) }$, a unitary $u_{n} \in \multialg{ \mathfrak{B} (n) }$, and there exists an injective $*$-homomorphism $\ftn{ \phi_{\infty} }{ \mathfrak{A} ( \infty ) }{ \mathfrak{B} ( \infty ) }$ such that 
\begin{align*}
 q_{\infty} (\{ \mathrm{Ad}( u_{n} ) \circ \phi_{n} \} ) \circ \overline{\tau}_{  \mathfrak{A}  } =  \overline{ \tau }_{ \mathfrak{B}  } \circ \phi_{\infty} 
\end{align*}
and $\overline{\tau}_{ \mathfrak{A}  }$ and $\overline{\tau}_{ \mathfrak{B}  }$ are the reduced Busby maps of $\mathfrak{A}$ and $\mathfrak{B}$ respectively.  Then there exists an $\widetilde{\N}$-equivariant $*$-homomorphism $\ftn{ \psi }{ \mathfrak{A} }{ \mathfrak{B} }$ such that $E( \phi_{n} ) = E( \psi_{n} )$ in $E( \mathfrak{A} ( n ) , \mathfrak{B}(n) )$ for all $n \in \widetilde{\N}$ and if $a \in \mathfrak{A}$ with $F = \setof{ n \in \widetilde{\N} }{ \pi_{n} ( a ) = 0 }$, then $\psi(a)$ is full in $\mathfrak{B} ( \widetilde{\N} \setminus F )$. 
\end{lemma}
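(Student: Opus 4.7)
The plan is to construct $\psi$ via the pullback description of $\mathfrak{A}$ and $\mathfrak{B}$ afforded by Lemma~\ref{l:reducedBusby}: both algebras are identified with pullbacks $\mathfrak{A}(\infty) \oplus_{\overline{\tau}_{\mathfrak{A}},\rho_{\mathfrak{A}}} \ell^{\infty}(\{\mathfrak{A}(n)\})$ and similarly for $\mathfrak{B}$. Setting $\Phi := \ell^{\infty}(\{\mathrm{Ad}(u_n) \circ \phi_n\}) \colon \ell^\infty(\{\mathfrak{A}(n)\}) \to \ell^\infty(\{\mathfrak{B}(n)\})$, the induced map on the quotients is exactly $q_{\infty}(\{\mathrm{Ad}(u_n) \circ \phi_n\})$, and the hypothesis
\[
q_{\infty}(\{\mathrm{Ad}(u_n) \circ \phi_n\}) \circ \overline{\tau}_{\mathfrak{A}} = \overline{\tau}_{\mathfrak{B}} \circ \phi_{\infty}
\]
is precisely the compatibility required to induce a map on pullbacks. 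Hence the universal property yields a unique $*$-homomorphism $\psi \colon \mathfrak{A} \to \mathfrak{B}$ satisfying $\pi_n^{\mathfrak{B}} \circ \psi = \mathrm{Ad}(u_n) \circ \phi_n \circ \pi_n^{\mathfrak{A}}$ for every $n \in \N$ and $\pi_{\infty}^{\mathfrak{B}} \circ \psi = \phi_{\infty} \circ \pi_{\infty}^{\mathfrak{A}}$.

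Once $\psi$ is in hand, I verify $\widetilde{\N}$-equivariance fiberwise: for an open $U \subseteq \widetilde{\N}$ and $a \in \mathfrak{A}(U)$, one has $\pi_n^{\mathfrak{A}}(a) = 0$ for all $n \in \widetilde{\N} \setminus U$, so by the defining identities $\pi_n^{\mathfrak{B}}(\psi(a)) = 0$ as well, and tightness of $\mathfrak{B}$ places $\psi(a)$ in $\mathfrak{B}(U)$. For the fiber $E$-theory equality, the induced map $\psi_n$ for $n \in \N$ equals $\mathrm{Ad}(u_n) \circ \phi_n$, which is unitarily equivalent to $\phi_n$ by a unitary in $\multialg{\mathfrak{B}(n)}$ and therefore agrees with $\phi_n$ in $KK$, and hence in $E$; at $n = \infty$ one has $\psi_{\infty} = \phi_{\infty}$ by construction.

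The main obstacle is the fullness assertion, since the set $\widetilde{\N} \setminus F$ is not obviously open. I handle two cases. If $\infty \in F$ then $\widetilde{\N} \setminus F \subseteq \N$, which is automatically open in $\widetilde{\N}$. If $\infty \notin F$, then $\pi_{\infty}^{\mathfrak{A}}(a) \neq 0$, and I invoke the continuity hypothesis on $\mathfrak{A}$ through Lemma~\ref{l:continuous} applied to the lift $\{\pi_n^{\mathfrak{A}}(a)\}_{n=1}^\infty$ of $\overline{\tau}_{\mathfrak{A}}(\pi_{\infty}^{\mathfrak{A}}(a))$: this yields an $\epsilon > 0$ for which $\|\pi_n^{\mathfrak{A}}(a)\| < \epsilon$ for only finitely many $n$, so in particular $F \cap \N$ is finite and $\widetilde{\N} \setminus F$ is cofinite (and contains $\infty$), hence open. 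Now the defining identities for $\psi$ show $\psi(a) \in \mathfrak{B}(\widetilde{\N} \setminus F)$, and injectivity of each $\phi_n$ and of $\phi_{\infty}$, combined with $\mathrm{Ad}(u_n)$ being a $*$-isomorphism, yields $\pi_n^{\mathfrak{B}}(\psi(a)) \neq 0$ for every $n \in \widetilde{\N} \setminus F$. An application of Lemma~\ref{l:fullelement} to the tight algebra $\mathfrak{B}$ and the open set $\widetilde{\N} \setminus F$ then gives fullness of $\psi(a)$ in $\mathfrak{B}(\widetilde{\N} \setminus F)$. This step is the technical heart of the argument, being the only place where continuity of $\mathfrak{A}$, tightness of $\mathfrak{B}$, and the injectivity of the fiber maps are all simultaneously required.
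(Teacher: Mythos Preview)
Your proposal is correct and follows essentially the same approach as the paper: construct $\psi$ via the pullback identifications of Lemma~\ref{l:reducedBusby}, verify $\widetilde{\N}$-equivariance from the fiberwise description of ideals, and deduce fullness from Lemma~\ref{l:fullelement} after showing $\widetilde{\N}\setminus F$ is open. Your case split ($\infty\in F$ versus $\infty\notin F$, the latter handled via Lemma~\ref{l:continuous}) makes explicit what the paper compresses into the single phrase ``$U=\widetilde{\N}\setminus F$ is open by Lemma~\ref{l:continuous}'', and your equivariance check unpacks what the paper records as ``a computation''.
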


\begin{proof}
By Lemma \ref{l:reducedBusby} we may assume that $\mathfrak{A} = \mathfrak{A} ( \infty ) \oplus_{ \overline{\tau}_{ \mathfrak{A}  } , \rho_{ \mathfrak{A} } } \ell^{ \infty } ( \{ \mathfrak{A} (n) \} )$ and that $\mathfrak{B} = \mathfrak{B} ( \infty ) \oplus_{  \overline{\tau}_{ \mathfrak{B}  } , \rho_{ \mathfrak{B} } } \ell^{ \infty } ( \{ \mathfrak{B} (n) \} )$.  Set $\widetilde{\phi}_{\N} =\ell^{\infty} ( \{ \mathrm{Ad}( u_{n} ) \circ \phi_{n}  \} )$.  Since 
\begin{align*}
 q_{\infty} (\{ \mathrm{Ad}( u_{n} ) \circ \phi_{n} \} ) \circ \overline{\tau}_{  \mathfrak{A}  } =  \overline{ \tau }_{ \mathfrak{B}  } \circ \phi_{\infty} 
\end{align*} is precisely the pull-back relation, $\ftn{ \psi }{ \mathfrak{A} }{ \mathfrak{B} }$ by $\psi ( ( a, x ) ) = ( \phi_{\infty}(a), \widetilde{\phi}_{\N}(x) )$ is a well-defined $*$-homomorphism.  A computation shows that $\psi$ is $\widetilde{\N}$-equivariant since $u_{n}$ is a unitary.

Let $(a,x)\in \mathfrak A$ and let $F= \setof{ n \in \widetilde{\N}}{\pi_n(a,x) = 0}$. Then $U = \widetilde{\N} \setminus F$ is open by Lemma \ref{l:continuous} and $(a,x) \in \mathfrak A(U)$. Since $\psi$ is $\widetilde{\N}$-equivariant, $\psi(a,x) \in \mathfrak B(U)$. Since $u$ is a unitary in $\ell^{\infty} ( \{ \multialg{ \mathfrak{B}(n) } \} )$ and $\phi_{n}$ is injective for each $n \in \widetilde{\N}$, we have that $F = \setof{ n \in \widetilde{\N} }{ \pi_{n} (\psi(a,x)) = 0 }$.  Therefore, by Lemma~\ref{l:fullelement}, $\psi(a,x)$ is full in $\mathfrak{B} (U)$ since $\mathfrak{B}$ is a tight $C^{*}$-algebra over $\widetilde{\N}$.  

By the construction of $\psi$, we have that  $E( \phi_{n} ) = E( \psi_{n} )$ in $E( \mathfrak{A} ( n ) , \mathfrak{B}(n) )$ for all $n \in \widetilde{\N}$.
\end{proof}

We will use the following observation several times for the rest of the paper, without further mentioning: If $\mathfrak A$ is a $C^\ast$-algebra over $\widetilde{\N}$ for which $\mathfrak A(n)$ has real rank zero for each $n\in \widetilde{\N}$, then $\mathfrak A$ has real rank zero. This follows since $\mathfrak A(\N)$ and $\mathfrak A(\infty)$ have real rank zero and the map $K_0(\mathfrak A)\to K_0(\mathfrak A(\infty) )$ is surjective.

Recall that a \emph{Kirchberg algebra} is a separable, nuclear, purely infinite simple $C^{*}$-algebra.  Let $\mathcal{N}$ be the bootstrap category defined in \cite{UCT}.  
 
\begin{theor}\label{t:existence}
Let $\mathfrak{A}$ and $\mathfrak{B}$ be tight, stable $C^{*}$-algebras over $\widetilde{\N}$.  Suppose for each $n \in \N$, that $\mathfrak{A}(n)$ is an AF-algebra or a Kirchberg algebra in $\mathcal{N}$ and that $\mathfrak{B}(n)$ is an AF-algebra or a Kirchberg algebra in $\mathcal{N}$, and suppose that $\mathfrak{A}( \infty )$ and $\mathfrak{B}(\infty)$ are AF-algebras. 

If $\gamma \in E( \widetilde{\N} , \mathfrak{A}, \mathfrak{B} )$ is invertible such that $K_{0} ( \gamma_{n} )$ is an order isomorphism for all $n \in \widetilde{ \N }$, then there exists a full $\widetilde{ \N }$-equivariant $*$-homomorphism $\ftn{ \phi }{ \mathfrak{A}   }{ \mathfrak{B} }$ such that $\underline{K} ( \phi_{ n } ) = \underline{K} ( \gamma_{n} )$ for all $n \in \widetilde{\N}$ and such that the $C( \widetilde{\N} , \Z )$-homomorphisms $K_{*} ( \phi )$ and $K_{*} ( \gamma )$ are equal. 
\end{theor}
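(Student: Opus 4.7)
The plan is to build $\phi$ in three stages: produce fiberwise maps $\phi_n$ realizing $\gamma_n$ for $n \in \widetilde{\N}$, adjust them by multiplier unitaries so that the reduced Busby maps of $\mathfrak{A}$ and $\mathfrak{B}$ intertwine, and then invoke Lemma~\ref{l:fullhomo} to assemble a global $\widetilde{\N}$-equivariant $*$-homomorphism from the compatible fiberwise data, with $K$-theoretic conclusions extracted via Lemma~\ref{l:ethyequal}.

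For the first stage, for each $n \in \widetilde{\N}$, the element $\gamma_n$ is a $KK$-equivalence inducing an ordered $K_0$-isomorphism between simple fibers; the order isomorphism constraint forces the fibers $\mathfrak{A}(n)$ and $\mathfrak{B}(n)$ to be of the same classifiable type (both AF or both Kirchberg, since the positive cones agree only in these two cases). Applying Elliott's theorem in the AF case and the Kirchberg--Phillips theorem in the Kirchberg case (using that both fibers lie in $\mathcal{N}$ so that $E$- and $KK$-invertibility coincide), I lift $\gamma_n$ to an injective $*$-homomorphism $\phi_n \colon \mathfrak{A}(n) \to \mathfrak{B}(n)$ with $\underline{K}(\phi_n) = \underline{K}(\gamma_n)$.

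For the second stage, the hypothesis of Lemma~\ref{l:fullhomo} requires unitaries $u_n \in \multialg{\mathfrak{B}(n)}$ satisfying
\[
q_\infty(\{ \mathrm{Ad}(u_n) \circ \phi_n \}) \circ \overline{\tau}_{\mathfrak{A}} = \overline{\tau}_{\mathfrak{B}} \circ \phi_\infty.
\]
Since $\mathfrak{A}(\infty)$ is AF, I write it as the closure of an increasing union of finite-dimensional subalgebras $\mathfrak{D}_1 \subseteq \mathfrak{D}_2 \subseteq \cdots$. The two compositions $\overline{\tau}_{\mathfrak{B}} \circ \phi_\infty$ and $q_\infty(\{ \phi_n \}) \circ \overline{\tau}_{\mathfrak{A}}$ agree on $K_0$ when restricted to each $\mathfrak{D}_k$ by the $\widetilde{\N}$-equivariance of $\gamma$ and $\underline{K}(\phi_n) = \underline{K}(\gamma_n)$; the fullness hypotheses of Lemma~\ref{l:unitaryequiv} are verified via Corollary~\ref{c:continuous} (non-vanishing of lifted projections cofinally), Lemma~\ref{l:fullelement} (characterizing fullness in the tight $\mathfrak{B}$), and Proposition~\ref{p:rrstwc} (stable weak cancellation of each $\mathfrak{B}(n)$, which holds for AF-algebras and Kirchberg algebras). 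Lemma~\ref{l:unitaryequiv} then yields, at each stage $\mathfrak{D}_k$, a unitary in $\ell^\infty(\{ \multialg{\mathfrak{B}(n)} \})$ implementing the equivalence, and a standard approximate intertwining across the stages produces a single sequence $\{u_n\}$ realizing the pullback identity on all of $\mathfrak{A}(\infty)$.

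For the third stage, Lemma~\ref{l:fullhomo} applied to the data $(\phi_\infty, \{\phi_n\}, \{u_n\})$ delivers an $\widetilde{\N}$-equivariant $*$-homomorphism $\phi \colon \mathfrak{A} \to \mathfrak{B}$ with $E(\phi_n) = E(\gamma_n)$ in $E(\mathfrak{A}(n), \mathfrak{B}(n))$ for every $n \in \widetilde{\N}$ (whence $\underline{K}(\phi_n) = \underline{K}(\gamma_n)$) and the property that any $a \in \mathfrak{A}$ vanishing precisely on $F \subseteq \widetilde{\N}$ is sent to a full element of $\mathfrak{B}(\widetilde{\N} \setminus F)$; combined with Lemma~\ref{l:fullelement}, this shows $\phi$ is full as an $\widetilde{\N}$-equivariant $*$-homomorphism in the sense of the definition. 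Finally, Lemma~\ref{l:ethyequal} yields $K_*(\phi) = K_*(\gamma)$ as $C(\widetilde{\N}, \Z)$-homomorphisms. The main obstacle is the second stage: the stagewise unitaries from Lemma~\ref{l:unitaryequiv} must be diagonalized into a single coherent sequence $\{u_n\}$ that intertwines the reduced Busby maps on the nose rather than stagewise, which is delicate because the conjugation takes place in $\ell^\infty(\{ \multialg{\mathfrak{B}(n)} \})$ while the equality is required only after passage to the quotient $q_\infty(\{ \mathfrak{B}(n) \})$.
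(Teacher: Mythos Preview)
Your outline is viable, but it departs from the paper's argument in a significant way. Both you and the paper begin identically: lift each $\gamma_n$ to a fiberwise isomorphism $\phi_n$ via Elliott or Kirchberg--Phillips, and then try to match the reduced Busby maps. The divergence is in how the matching is achieved. The paper never produces a single sequence $\{u_n\}$ valid for all of $\mathfrak{A}(\infty)$; instead it forms pullback algebras $\mathfrak{D}_k$ over the finite-dimensional truncations $\mathfrak{F}_k \subseteq \mathfrak{A}(\infty)$, applies Lemma~\ref{l:unitaryequiv} and Lemma~\ref{l:fullhomo} separately at each stage to get maps $\psi_k\colon \mathfrak{D}_k \to \mathfrak{B}$, and then invokes the uniqueness Theorem~\ref{t:uniq} (with multiplier unitaries of $\mathfrak{B}$, not of the fibers) to make the $\psi_k$ approximately compatible and pass to the limit. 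So Theorem~\ref{t:uniq} is used inside the existence proof, and Lemma~\ref{l:fullhomo} is applied infinitely often rather than once.

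Your route avoids Theorem~\ref{t:uniq} entirely and applies Lemma~\ref{l:fullhomo} a single time, at the cost of the second stage you flag. That stage does work, but the phrase ``standard approximate intertwining'' is misleading: what is actually required is a diagonal selection. At stage $k$, Lemma~\ref{l:unitaryequiv} gives $u^{(k)}=\{u^{(k)}_n\}$ satisfying the Busby identity on $\mathfrak{F}_k$ in $q_\infty(\{\mathfrak{B}(n)\})$; since $\mathfrak{F}_k$ is finite-dimensional one can choose $N_k$ so that the identity holds coordinatewise up to $1/k$ for $n\geq N_k$ on a fixed basis (with fixed lifts), and then setting $u_n=u^{(k)}_n$ for $N_k\leq n<N_{k+1}$ yields a single sequence for which the equality $q_\infty(\{\mathrm{Ad}(u_n)\circ\phi_n\})\circ\overline{\tau}_{\mathfrak{A}}=\overline{\tau}_{\mathfrak{B}}\circ\phi_\infty$ holds on $\bigcup_k\mathfrak{F}_k$, hence on $\mathfrak{A}(\infty)$. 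You should spell this out rather than appeal to intertwining, since no inductive-limit intertwining is taking place here. What you gain is a shorter, self-contained construction that does not feed the uniqueness theorem back into existence; what the paper gains is a cleaner modular structure that re-uses the already-proved Theorem~\ref{t:uniq} and avoids bookkeeping of lifts and $N_k$'s.
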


\begin{proof}
Every AF-algebra and every Kirchberg algebra have stable weak cancellation, so by Proposition~\ref{p:rrstwc}, $\mathfrak{A}$ and $\mathfrak{B}$ have weak cancellation.  Since $K_{0} ( \mathfrak{C} )_{+} = K_{0} ( \mathfrak{C} )$ for any Kirchberg algebra $\mathfrak{C}$ and $K_{0} ( \mathfrak{D} )_{+} \neq K_{0} ( \mathfrak{D} )$ for any non-zero AF-algebra $\mathfrak{D}$, we get for each $n \in \widetilde{\N}$ that either $\mathfrak{A} ( n )$ and $\mathfrak{B} ( n )$ are both AF-algebras or both Kirchberg algebras.  Therefore by the classification of AF-algebras \cite{af} and the Kirchberg-Phillips classification (\cite{kirchpure} and \cite{phillipspureinf}), there exists a $*$-isomorphism $\ftn{ \phi_{n} }{ \mathfrak{A}( n ) }{ \mathfrak{B} (n) }$ such that $E ( \phi_{n} ) = \gamma_{n}$ in $E( \mathfrak{A}(n), \mathfrak{B} (n) )$.  Define $\phi_{U} = \bigoplus_{ n \in U } \phi_{n}$ for all $U \subseteq \N$.  Then $\phi_{U}$ is a $*$-isomorphism from $\mathfrak{A} (U)$ to $\mathfrak{B} (U)$ and $E( \phi_{U} ) = \gamma_{U}$ in $E ( \mathfrak{A} (U) , \mathfrak{B} (U) )$.

Set $\widetilde{\phi}_{\N} = \ell^{ \infty }( \{ \phi_{n} \} )$ and set $\overline{\phi}_{\N} = q_{\infty} ( \{ \phi_{n} \} )$.  Then $\ftn{ \widetilde{ \phi }_{ \N } }{ \ell^{\infty} ( \{ \mathfrak{A} ( n ) \} ) }{ \ell^{\infty} ( \{ \mathfrak{B} ( n )\} ) }$ and $\ftn{ \overline{ \phi }_{ \N } }{ q_{ \infty} ( \{ \mathfrak{A}(n) \} ) }{  q_{ \infty} ( \{ \mathfrak{B}(n) \} ) }$ are $*$-isomorphisms.  Since $\mathfrak{A} ( \infty )$ is an AF-algebra, there exists a sequence of finite dimensional sub-$C^{*}$-algebras of $\mathfrak{A}(\infty)$, $\{ \mathfrak{F}_{n} \}_{ n = 1}^{\infty}$, such that $\mathfrak{F}_{n} \subseteq \mathfrak{F}_{n+1}$ and $\bigcup_{ n = 1}^{\infty} \mathfrak{F}_{n}$ is dense in $\mathfrak{A}(\infty)$.  Let $\mathfrak{D}_{k}$ be the pullback of the diagram
\begin{align*}
\xymatrix{
		&  \iota_{\infty} ( \mathfrak{F}_{k} ) \ar[d] \\
\mathfrak{A} \ar[r] & \iota_{\infty} ( \mathfrak{A}( \infty ) ) .		
}
\end{align*}
Then, for each $k \in \N$, $\mathfrak{D}_{k}$ is a $C^{*}$-algebra over $\widetilde{\N}$ by \cite[Lemma 2.24]{mdrm:ethy}, and there exist $\widetilde{ \N }$-equivariant $*$-homomorphisms $\ftn{ \iota_{k} }{ \mathfrak{D}_{k} }{ \mathfrak{A} }$ and $\ftn{ \lambda_{k, k+1} }{ \mathfrak{D}_{k} }{ \mathfrak{D}_{k+1} }$ such that $\mathfrak{A} = \varinjlim ( \mathfrak{D}_{k} , \lambda_{k, k+1} )$ and the diagram 
\begin{align*}
\xymatrix{
0 \ar[r] & \mathfrak{A}( \N ) \ar[r] \ar@{=}[d] & \mathfrak{D}_{k} \ar[r] \ar[d]^{ \iota_{k} } & \mathfrak{F}_{k} \ar[r]  \ar[d]_{ ( \iota_{k} )_{\infty}}& 0 \\
0 \ar[r] & \mathfrak{A} ( \N ) \ar[r] & \mathfrak{A} \ar[r] & \mathfrak{A} ( \infty ) \ar[r] & 0
}
\end{align*}
is commutative with exact rows.  Note that $\mathfrak{D}_{k} = \mathfrak{F}_{k} \oplus_{ \overline{\tau}_{  \mathfrak{A}  } \circ ( \iota_{k} )_{\infty} , \rho_{ \mathfrak{A} } } \ell^{\infty} ( \{ \mathfrak{A} (n) \} )$.   Since $0 \to \mathfrak{A}(\N) \to \mathfrak{A} \to \mathfrak{A}(\infty) \to 0$ is a quasi-diagonal extension by Proposition~\ref{p:quasidiag}, we have that $0 \to \mathfrak{A}(\N ) \to \mathfrak{D}_{k} \to \mathfrak{F}_{k} \to 0$ is a quasi-diagonal extension.

Since $\mathfrak{A}$ is a tight $C^{*}$-algebra over $\widetilde{\N}$, we have that $\mathfrak{A}$ is a continuous $C^{*}$-algebra over $\widetilde{\N}$.  Hence, by Lemma~\ref{l:contsubalg}, $\mathfrak{D}_{k}$ is a continuous $C^{*}$-algebra over $\widetilde{\N}$.  Therefore, by Lemma~\ref{c:continuous} and since $\mathfrak B$ is continuous, $\overline{ \phi}_{\N } \circ \overline{\tau}_{  \mathfrak{A} } \circ ( \iota_{k} )_{\infty}$ and $\overline{\tau}_{\mathfrak{B} } \circ \phi_{\infty} \circ ( \iota_{k} )_{\infty}$ satisfies properties (2) and (3) of Lemma~\ref{l:unitaryequiv}.  Since each $\mathfrak A(n)$ is separable, stable and has real rank zero, $K_{0}(\ell^\infty(\{ \mathfrak A(n)\})) \cong \prod_{ n = 1}^{\infty} K_{0} ( \mathfrak{A}(n) )$ and $K_{0} ( q_{\infty} ( \{ \mathfrak{A}(n) \} ) ) \cong \frac{ \prod_{ n = 1}^{\infty} K_{0} ( \mathfrak{A}(n) ) }{ \bigoplus_{n = 1}^{\infty} K_{0} ( \mathfrak{A}(n) ) }$ where the isomorphisms are induced by the coordinate projections. Hence by Lemma~\ref{l:reducedBusby}, $K_0(\overline \tau_{\mathfrak A})$, and similarly $K_0(\overline \tau_{\mathfrak B})$, is exactly the map induced by the coordinate projections.  Thus, since $K_{*} ( \gamma )$ is a $C( \widetilde{\N} , \Z )$-module homomorphism, we have that $K_{0} ( \overline{\phi}_{\N} \circ \overline{\tau}_{  \mathfrak{A} } )= K_{0} ( \overline{\tau}_{\mathfrak{B}  } \circ  \phi_{\infty} )$.  Therefore, 
\begin{align*}
K_{0} ( \overline{ \phi}_{\N } \circ \overline{\tau}_{  \mathfrak{A}  } \circ ( \iota_{k} )_{\infty} ) = K_{0} ( \overline{\tau}_{\mathfrak{B} } \circ \phi_{\infty} \circ ( \iota_{k} )_{\infty} ).
\end{align*}
By Lemma~\ref{l:unitaryequiv} and Lemma~\ref{l:fullhomo}, there exists an $\widetilde{\N}$-equivariant $*$-homomorphism $\ftn{\psi_{k}}{ \mathfrak{D}_{k} }{\mathfrak{B} }$ such that $E( ( \psi_{k})_{n} ) = E( \phi_{n} ) = \gamma_{n} \circ E((\iota_k)_{n})$ for all $n \in \N$ and $E( ( \psi_{k})_{\infty} ) = E( \phi_{\infty} \circ (\iota_k)_\infty ) = \gamma_{\infty} \circ E((\iota_k)_{\infty})$.  Moreover, by Lemma~\ref{l:fullhomo}, $\psi_{k}$ has the property that for each $p \in \mathfrak{D}_{k}$ with $U = \widetilde{\N} \setminus F$ where $F= \setof{ n \in \widetilde{\N} }{ \pi_{n} ( p ) = 0 }$, we have that $\psi_{k} (p)$ is full in $\mathfrak{B} (U)$.  By Lemma~\ref{l:ethyequal}, the $C( \widetilde{\N} , \Z )$-homomorphisms $K_{*} ( \psi_{k} )$ and $K_{*} (\gamma ) \circ K_{*} ( \iota_{k} )$ are equal.  Therefore, for each projection $p \in \mathfrak{D}_{k}$, we have that $\psi_{k} ( p )$ and $\psi_{k+1} ( \lambda_{k, k+1} ( p) )$ generate the same ideal $\mathfrak{B} (U)$ for some $U \in \widetilde{\N}$ and $[ \psi_{k} ( p ) ] = [ \psi_{k+1} ( \lambda_{k, k+1} ( p) ) ]$ in $K_{0} ( \mathfrak{B} (U)  )$.  Thus, for each projection $p \in \mathfrak{D}_{k}$, $\psi_{k} ( p )$ and $\psi_{k+1} ( \lambda_{k, k+1} ( p) )$ are Murray-von Neumann equivalent since $\mathfrak{B}$ has stable weak cancellation by Proposition~\ref{p:rrstwc}.  Note also that $E( ( \psi_{k} )_{\N} ) = \gamma_{ \N } \circ E( ( \iota_{k} )_{\N} ) =  E ( ( \psi_{k+1} \circ \lambda_{k,k+1} )_{\N} )$.

Let $\mathcal{H}_{k}$ be finite subsets of $\mathfrak{D}_{k}$ such that $\lambda_{k, k+1} ( \mathcal{H}_{k} ) \subseteq \mathcal{H}_{k+1}$ and $\bigcup_{ k = 1}^{\infty} \iota_{k} ( \mathcal{H}_{k} )$ is dense in $\mathfrak{A}$.  By Theorem~\ref{t:uniq}, there exists a unitary $w_{k} \in \multialg{ \mathfrak{B} }$ with $w_{1} = 1_{\multialg{ \mathfrak{B} } }$ such that 
\begin{align*}
\norm{ w_{k+1} ( \psi_{k+1} \circ \lambda_{k,k+1} )( x  ) w_{k+1}^{*} - w_{k}\psi_{k} (x) w_{k} } < \frac{1}{2^{k}}
\end{align*}
for all $x \in \mathcal{H}_{k}$.  Hence, there exists a $*$-homomorphism $\ftn{ \psi }{ \mathfrak{A} }{ \mathfrak{B} }$ such that 
\begin{align*}
\norm{ \psi \circ \iota_{k} ( x  ) - w_{k}\psi_{k}  ( x  ) w_{k}^{*} } < \sum_{ m = k }^{\infty} \frac{1}{2^{m}}
\end{align*}
for all $x \in \mathcal{H}_{k}$.  Since $\psi_{k}$ and $\lambda_{k,k+1}$ are $\widetilde{\N}$-equivariant $*$-homomorphisms, we have that $\psi$ is an $\widetilde{\N}$-equivariant $*$-homomorphism by Lemma~\ref{l:Xlimit}.  By construction, $\psi$ is a full $\widetilde{\N}$-equivariant $*$-homomorphism since for each $x$ in $\mathfrak{D}_{k}$ with $\iota_{k} (x)$ full in $\mathfrak{A} (U)$, we have that $\psi_{k} (x)$ is full in $\mathfrak{B} (U)$.  Also, the $C( \widetilde{\N} , \Z )$-homomorphisms, $K_{*} ( \psi )$ and $K_{*} ( \gamma )$ are equal and $\underline{K} ( \psi_{ n } ) = \underline{K} ( \gamma_{n} )$ for all $n \in \widetilde{\N}$, by Lemma~\ref{l:ethyequal}. 
\end{proof}

\section{Classification using ideal-related $K$-theory with coefficient}

In this section, we prove a classification result for tight $C^{*}$-algebras over $\widetilde{\N}$ whose fibers are AF-algebras or Kirchberg algebras in $\mathcal{N}$ using ideal-related $K$-theory with coefficient.

\begin{lemma}\label{t:totalkthy}
Let $\mathfrak{A}$ and $\mathfrak{B}$ be $C^{*}$-algebras over $\widetilde{\N}$ and let $\alpha$ and $\beta$ be $C( \widetilde{\N} , \Lambda )$-homomorphisms.  Suppose that $K_{0} ( \mathfrak{A}(\infty) )$ is torsion-free, $K_{1} ( \mathfrak{A}( \infty ) )$ is zero, and that the extension 
\begin{align*}
0 \to \mathfrak{A} ( \N ) \to \mathfrak{A} \to \mathfrak{A} ( \infty ) \to 0
\end{align*}
is quasi-diagonal.  If $\alpha_{n} = \beta_{n}$ for all $n \in \widetilde{\N}$ and the $C( \widetilde{\N} , \Z )$-homomorphisms, $K_{*} ( \alpha )$ and $K_{*} ( \beta )$, are equal, then $\alpha = \beta$.
\end{lemma}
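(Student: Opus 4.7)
The plan is to establish $\alpha = \beta$ on all of $\underline{K}(\mathfrak{A})$ by analyzing the six-term exact sequences (with integer and $\Z_p$ coefficients) for the extension $0 \to \mathfrak{A}(\N) \to \mathfrak{A} \to \mathfrak{A}(\infty) \to 0$, together with naturality of the Bockstein operations. The two given inputs will plug in at the $K_*$-level and at each fiber, and the exact sequences will propagate the equality to the rest of $\underline{K}(\mathfrak{A})$.

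First I would use quasi-diagonality to deduce that every projection (and matrix over a projection) in $\mathfrak{A}(\infty)$ lifts to $\mathfrak{A}$, which forces the index map $\partial_0 \colon K_0(\mathfrak{A}(\infty)) \to K_1(\mathfrak{A}(\N))$ to vanish. Combined with $K_1(\mathfrak{A}(\infty)) = 0$, this yields
\[
0 \to K_0(\mathfrak{A}(\N)) \to K_0(\mathfrak{A}) \to K_0(\mathfrak{A}(\infty)) \to 0, \qquad K_1(\mathfrak{A}(\N)) \xrightarrow{\cong} K_1(\mathfrak{A}).
\]
Because $K_0(\mathfrak{A}(\infty))$ is torsion-free and $K_1(\mathfrak{A}(\infty)) = 0$, the Bockstein sequence gives $K_0(\mathfrak{A}(\infty); \Z_p) = K_0(\mathfrak{A}(\infty))/p$, $K_1(\mathfrak{A}(\infty); \Z_p) = 0$, and makes the reduction $\rho_p \colon K_0(\mathfrak{A}(\infty)) \to K_0(\mathfrak{A}(\infty); \Z_p)$ surjective. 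Naturality of $\rho_p$ with respect to the integer and $\Z_p$ index maps then forces $\partial_0^{\Z_p} = 0$, so the analogous sequence $0 \to K_0(\mathfrak{A}(\N); \Z_p) \to K_0(\mathfrak{A}; \Z_p) \to K_0(\mathfrak{A}(\infty); \Z_p) \to 0$ and isomorphism $K_1(\mathfrak{A}(\N); \Z_p) \cong K_1(\mathfrak{A}; \Z_p)$ hold.

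Second, I will use that $\mathfrak{A}(\N) \cong c_0(\{\mathfrak{A}(n)\})$, so $\underline{K}(\mathfrak{A}(\N)) \cong \bigoplus_{n \in \N} \underline{K}(\mathfrak{A}(n))$; via the inclusion $\mathfrak{A}(\N) \hookrightarrow \mathfrak{A}$ this identifies with the subgroup $\bigoplus_{n \in \N} \chi_{\{n\}} \underline{K}(\mathfrak{A}) \subseteq \underline{K}(\mathfrak{A})$, where each $\chi_{\{n\}} \in C(\widetilde{\N}, \Z)$ because $\{n\}$ is clopen in $\widetilde{\N}$. Since $\alpha$ and $\beta$ are $C(\widetilde{\N}, \Lambda)$-linear they preserve each summand $\chi_{\{n\}}\underline{K}(\mathfrak{A}) = \underline{K}(\mathfrak{A}(n))$, restricting there to $\alpha_n$ and $\beta_n$; hence $\alpha = \beta$ on $\underline{K}(\mathfrak{A}(\N))$ inside $\underline{K}(\mathfrak{A})$.

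Finally, I would treat a general $x \in K_0(\mathfrak{A}; \Z_p)$ by lifting: its image in $K_0(\mathfrak{A}(\infty); \Z_p) = K_0(\mathfrak{A}(\infty))/p$ is the class of some $y$, and surjectivity of $K_0(\mathfrak{A}) \to K_0(\mathfrak{A}(\infty))$ produces a lift $z \in K_0(\mathfrak{A})$. By naturality of $\rho_p$, $x - \rho_p(z) \in K_0(\mathfrak{A}(\N); \Z_p)$, so
\[
(\alpha - \beta)(x) = (\alpha - \beta)(x - \rho_p(z)) + \rho_p\bigl((\alpha - \beta)(z)\bigr) = 0,
\]
where the first summand vanishes by the previous step and the second by $K_*(\alpha) = K_*(\beta)$ together with $\Lambda$-linearity (commuting $\alpha, \beta$ past $\rho_p$). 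The remaining components $K_0(\mathfrak{A})$, $K_1(\mathfrak{A}) \cong K_1(\mathfrak{A}(\N))$, and $K_1(\mathfrak{A}; \Z_p) \cong K_1(\mathfrak{A}(\N); \Z_p)$ are immediate from the same two inputs. The delicate point will be the vanishing of $\partial_0^{\Z_p}$, which uses both the quasi-diagonality (for $\partial_0 = 0$) and the surjectivity of $\rho_p$ on the $\infty$-fiber — precisely where the torsion-freeness of $K_0(\mathfrak{A}(\infty))$ and the vanishing of $K_1(\mathfrak{A}(\infty))$ intervene.
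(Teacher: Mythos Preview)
Your argument is correct and follows essentially the same route as the paper's proof: both decompose an arbitrary $x \in K_0(\mathfrak{A};\Z_p)$ as $\rho_p(z)$ plus an element in the image of $K_0(\mathfrak{A}(\N);\Z_p)$, using quasi-diagonality for surjectivity of $K_0(\mathfrak{A}) \to K_0(\mathfrak{A}(\infty))$ and torsion-freeness for surjectivity of $\rho_p$ on the $\infty$-fiber, and both handle $K_1(\mathfrak{A};\Z_p)$ via surjectivity from $K_1(\mathfrak{A}(\N);\Z_p)$ (since $K_1(\mathfrak{A}(\infty);\Z_p)=0$). The only cosmetic differences are that you establish $\partial_0^{\Z_p}=0$ explicitly via naturality of $\rho_p$ with the boundary maps, whereas the paper runs the same decomposition as a direct diagram chase, and the paper phrases the argument for a general clopen neighbourhood $U = \widetilde{\N}\setminus F$ of $\infty$ rather than just $U=\widetilde{\N}$.
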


\begin{proof}
Since $\alpha_{n} = \beta_{n}$ for all $n \in \widetilde{\N}$, for all $U \subseteq \N$, we have that $\alpha_{U} = \beta_{U}$.  Let $F$ be a finite subset of $\N$ and let $U = \widetilde{\N} \setminus F$.  Set $V = U \setminus \{ \infty \}$.  Note that the extension
\begin{align*}
0 \to \mathfrak{A} ( V ) \to \mathfrak{A} (U) \to \mathfrak{A}(\infty) \to 0
\end{align*}
induces the following commutative diagram
\begin{align*}
\scalebox{.85}{\xymatrix{
K_{1} ( \mathfrak{A}(V) ) \ar[r]^-{ \times n} \ar[d] & K_{1} ( \mathfrak{A}(V) ) \ar[r] \ar[d] & K_{1} ( \mathfrak{A} ( V ) ; \Z_{n} ) \ar[r] \ar[d] & K_{0} ( \mathfrak{A}(V) ) \ar[r]^-{ \times n } \ar[d] & K_{0} ( \mathfrak{A}(V) ) \ar[r] \ar[d] & K_{0} ( \mathfrak{A}(V) ; \Z_{n} ) \ar[d]  \\
K_{1} ( \mathfrak{A}(U) ) \ar[r]^-{ \times n} \ar[d] & K_{1} ( \mathfrak{A}(U) ) \ar[r] \ar[d] & K_{1} ( \mathfrak{A} ( U ) ; \Z_{n} ) \ar[r] \ar[d] & K_{0} ( \mathfrak{A}(U) ) \ar[r]^-{ \times n } \ar[d] & K_{0} ( \mathfrak{A}(U) ) \ar[r] \ar[d] & K_{0} ( \mathfrak{A}(U) ; \Z_{n} ) \ar[d] \\
K_{1} ( \mathfrak{A}(\infty) ) \ar[r]^-{ \times n} & K_{1} ( \mathfrak{A}(\infty) ) \ar[r] & K_{1} ( \mathfrak{A} ( \infty ) ; \Z_{n} ) \ar[r] & K_{0} ( \mathfrak{A}(\infty) ) \ar[r]^-{ \times n } & K_{0} ( \mathfrak{A}(\infty) ) \ar[r] & K_{0} ( \mathfrak{A}(\infty) ; \Z_{n} ) 
}
}
\end{align*}
where the rows and columns are exact sequences.

Since $K_{0} ( \mathfrak{A}(\infty) )$ is torsion-free and $K_{1} ( \mathfrak{A}( \infty ) )$ is zero, we have that $K_{1} ( \mathfrak{A} ( \infty ) ; \Z_{n} ) = 0$.  Therefore, the homomorphism from $K_{1} ( \mathfrak{A}(V) ; \Z_{n} )$ to $K_{1} ( \mathfrak{A}(U) ; \Z_{n} )$ is surjective.  Since $\alpha_{V, 1} = \beta_{V,1}$, we have that $\alpha_{U, 1} = \beta_{U, 1}$.  

Since the extension
\begin{align*}
0 \to \mathfrak{A} ( V ) \to \mathfrak{A} (U) \to \mathfrak{A}(\infty) \to 0
\end{align*}
is quasi-diagonal, we have that the homomorphism from $K_{0} ( \mathfrak{A}(U) )$ to $K_{0} ( \mathfrak{A} (\infty) )$ is surjective.  Exactness of the bottom row and the fact that $K_{1} ( \mathfrak{A}(\infty ) ) = 0$ implies that the homomorphism from $K_{0} ( \mathfrak{A}(\infty) )$ to $K_{0} ( \mathfrak{A}(\infty) ; \Z_{n} )$ is surjective.  A diagram chase now shows that for all $x \in K_{0} ( \mathfrak{A}(U) ; \Z_{n} )$ there exist $y_{1} \in K_{0} ( \mathfrak{A}(U) )$ and $y_{2} \in K_{0} ( \mathfrak{A}(V) ; \Z_{n} )$ such that 
\begin{align*}
x = z_{1} + z_{2},
\end{align*}
where $z_{1}$ is the image of $y_{1}$ under the homomorphism from $K_{0} ( \mathfrak{A} (U) )$ to $K_{0} ( \mathfrak{A}(U) ; \Z_{n} )$ and $z_{2}$ is the image of $y_{2}$ under the homomorphism from $K_{0} ( \mathfrak{A} (V) ; \Z_{n} )$ to $K_{0} ( \mathfrak{A}(U) ; \Z_{n} )$.  Since $\alpha_{V,0} ( y_{2} ) = \beta_{V,0} (y_{2})$ and $K_{0} ( \alpha_{U} ) ( y_{1} ) = K_{0} ( \beta_{U} )( y_{2} )$, we have that 
\begin{align*}
\alpha_{U,0} ( x ) = \alpha_{U,0} ( z_{1} + z_{2} ) = \beta_{U, 0} ( z_{1} + z_{2} ) = \beta_{U, 0} (x).
\end{align*}  
Hence, $\alpha_{U} = \beta_{U}$.
\end{proof}

We are now ready to prove our first main classification result.  In the case that all the fibers are Kirchberg algebras, by \cite[Example~6.14]{mdrm:ethy}, ideal-related $K$-theory without coefficient is not a complete invariant for classification.  The result below shows that ideal-related $K$-theory with coefficients is a complete invariant.

\begin{theor}\label{t:class}
Let $\mathfrak{A}$ and $\mathfrak{B}$ be tight $C^{*}$-algebras over $\widetilde{\N}$.  Suppose for each $n \in \widetilde{\N}$, that $\mathfrak{A}(n)$ is an AF-algebra or a Kirchberg algebra in $\mathcal{N}$ and that $\mathfrak{B}(n)$ is an AF-algebra or a Kirchberg algebra in $\mathcal{N}$.

Suppose that there exists a $C( \widetilde{\N} , \Lambda )$-isomorphism $\ftn{ \gamma }{ \underline{K} ( \mathfrak{A}  ) }{ \underline{K} ( \mathfrak{B} ) }$ such that $K_{0} ( \gamma_{n} )$ is an order isomorphism for each $n \in \widetilde{\N}$.
\begin{itemize}
\item[(1)] If $\mathfrak{A}$ and $\mathfrak{B}$ are stable $C^{*}$-algebras, then there exists an $\widetilde{\N}$-equivariant $*$-isomorphism $\ftn{ \phi }{ \mathfrak{A}  }{ \mathfrak{B}  }$ such that the $C( \widetilde{\N} , \Lambda )$-isomorphisms $\underline{K} ( \phi )$ and $ \gamma $ are equal.

\item[(2)] If $\mathfrak{A}$ and $\mathfrak{B}$ are unital $C^{*}$-algebras and $K_{0} ( \gamma ) ( [ 1_{\mathfrak{A} } ] ) = [ 1_{\mathfrak{B}} ]$, then there exists an $\widetilde{\N}$-equivariant $*$-isomorphism $\ftn{ \phi }{ \mathfrak{A}  }{ \mathfrak{B}  }$ such that the $C( \widetilde{\N} , \Lambda )$-isomorphisms $\underline{K} ( \phi )$ and $ \gamma $ are equal.
\end{itemize}
\end{theor}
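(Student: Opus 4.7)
The plan is to apply the classical Elliott approximate-intertwining strategy, feeding in the existence theorem (Theorem~\ref{t:existence}) for lifting the data and the uniqueness theorem (Theorem~\ref{t:uniq}) for controlling the resulting morphisms. I first describe the argument for part (1); part (2) will reduce to (1) by stabilization. Observe that because each $K_0(\gamma_n)$ is an order isomorphism, for every $n \in \widetilde{\N}$ the fibres $\mathfrak{A}(n)$ and $\mathfrak{B}(n)$ are of the same type (AF or Kirchberg), since $K_0(\mathfrak{C})_+ = K_0(\mathfrak{C})$ holds precisely for Kirchberg algebras. The case where $\mathfrak{A}(\infty)$ and $\mathfrak{B}(\infty)$ are both Kirchberg can be handled separately via Dadarlat--Pasnicu; I focus on the main case, in which they are both AF, placing us in the setting of Theorem~\ref{t:existence}. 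Apply that theorem to $\gamma$ and to $\gamma^{-1}$ to obtain full $\widetilde{\N}$-equivariant $*$-homomorphisms $\phi \colon \mathfrak{A} \to \mathfrak{B}$ and $\psi \colon \mathfrak{B} \to \mathfrak{A}$ with $\underline{K}(\phi_n) = \gamma_n$ and $\underline{K}(\psi_n) = \gamma_n^{-1}$ for all $n$, and with $K_*(\phi) = K_*(\gamma)$ and $K_*(\psi) = K_*(\gamma)^{-1}$ as $C(\widetilde{\N}, \Z)$-homomorphisms.

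Next, I would verify the hypotheses of Theorem~\ref{t:uniq} for the pairs $(\psi \circ \phi, \mathrm{id}_{\mathfrak{A}})$ and $(\phi \circ \psi, \mathrm{id}_{\mathfrak{B}})$. Fullness on each fibre and fibrewise agreement of $\underline{K}$ are immediate by functoriality. The decisive point is the Murray--von Neumann equivalence of projections: for a projection $e \in \mathfrak{A}$, both $e$ and $(\psi \circ \phi)(e)$ are full in the same ideal $\mathfrak{I}$ of $\mathfrak{A}$ (the ideals coincide by $\widetilde{\N}$-equivariance and fibrewise injectivity of $\psi_n \circ \phi_n$), and they have equal classes in $K_0(\mathfrak{I})$ since $K_*(\psi) K_*(\phi) = \mathrm{id}$; stable weak cancellation of $\mathfrak{A}$ (Proposition~\ref{p:rrstwc}) converts this to MvN equivalence. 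Theorem~\ref{t:uniq} then gives that $\psi \circ \phi$ is approximately unitarily equivalent to $\mathrm{id}_{\mathfrak{A}}$, and similarly for $\phi \circ \psi$. A standard Elliott approximate-intertwining argument along a dense sequence produces an $\widetilde{\N}$-equivariant $*$-isomorphism $\widetilde{\phi} \colon \mathfrak{A} \to \mathfrak{B}$ (the $\widetilde{\N}$-equivariance of the limit being guaranteed by Lemma~\ref{l:Xlimit}) that is approximately unitarily equivalent to $\phi$. Since $\mathfrak{A}(\infty)$ is AF, $K_0(\mathfrak{A}(\infty))$ is torsion-free and $K_1(\mathfrak{A}(\infty)) = 0$, and the extension is quasi-diagonal by Proposition~\ref{p:quasidiag}, so Lemma~\ref{t:totalkthy} upgrades the fibrewise and $K_*$-level agreement between $\underline{K}(\widetilde{\phi})$ and $\gamma$ to full equality as $C(\widetilde{\N}, \Lambda)$-module maps.

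For part (2), I would stabilize and apply (1) to obtain an $\widetilde{\N}$-equivariant $*$-isomorphism $\Phi \colon \mathfrak{A} \otimes \K \to \mathfrak{B} \otimes \K$ with $\underline{K}(\Phi) = \gamma$ (via the canonical identification of $\underline{K}$ under stabilization). By hypothesis $[\Phi(1_{\mathfrak{A}} \otimes e_{11})] = K_0(\gamma)([1_{\mathfrak{A}}]) = [1_{\mathfrak{B}} \otimes e_{11}]$ in $K_0(\mathfrak{B} \otimes \K)$; both are full projections in the separable stable $C^*$-algebra $\mathfrak{B} \otimes \K$, so stable weak cancellation together with Brown's theorem yields a unitary $u \in \mathcal{M}(\mathfrak{B} \otimes \K)$ conjugating one onto the other. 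Then $\mathrm{Ad}(u) \circ \Phi$ restricts to a unital $\widetilde{\N}$-equivariant $*$-isomorphism $\mathfrak{A} \to \mathfrak{B}$ inducing the same $\underline{K}$-map $\gamma$, as conjugation by a multiplier unitary preserves the $\widetilde{\N}$-structure and acts trivially on $\underline{K}$. The main obstacle will be running the Elliott intertwining carefully inside the category $\catc(\widetilde{\N})$, so that the limit remains $\widetilde{\N}$-equivariant, and in ensuring that Lemma~\ref{t:totalkthy} can be applied to pass from fibrewise $\underline{K}$-agreement to the full $C(\widetilde{\N}, \Lambda)$-module equality claimed in the theorem.
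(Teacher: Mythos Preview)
Your overall strategy matches the paper's proof closely: split into the AF-$\infty$-fibre and Kirchberg-$\infty$-fibre cases, use Theorem~\ref{t:existence} to produce full $\widetilde{\N}$-equivariant $*$-homomorphisms in both directions, verify the hypotheses of Theorem~\ref{t:uniq} via stable weak cancellation (Proposition~\ref{p:rrstwc}), run Elliott intertwining, and conclude via Lemma~\ref{l:Xlimit} and Lemma~\ref{t:totalkthy}. Part~(2) is also handled the same way in spirit; the paper invokes \cite[Theorem~3.2]{err:strongclass}, which packages precisely the stabilize-and-cut-down-by-a-full-projection argument you describe.

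There is, however, one genuine missing step. The input to Theorem~\ref{t:existence} is an invertible element of $E(\widetilde{\N};\mathfrak{A},\mathfrak{B})$, whereas the hypothesis of the theorem you are proving gives you only a $C(\widetilde{\N},\Lambda)$-module isomorphism $\gamma\colon \underline{K}(\mathfrak{A}) \to \underline{K}(\mathfrak{B})$. You write ``apply that theorem to $\gamma$ and to $\gamma^{-1}$'' as though $\gamma$ were already an $E(\widetilde{\N})$-class, but it is not, and the passage from one to the other is not formal. The paper fills this gap by invoking the Universal Multicoefficient Theorem of Dadarlat--Meyer \cite[Theorem~6.11]{mdrm:ethy}, which says that for $\mathfrak{A}$ in the bootstrap class the natural map $E(\widetilde{\N};\mathfrak{A},\mathfrak{B}) \to \Hom_{C(\widetilde{\N},\Lambda)}(\underline{K}(\mathfrak{A}),\underline{K}(\mathfrak{B}))$ is bijective; this is what produces the invertible $\widetilde{\gamma} \in E(\widetilde{\N};\mathfrak{A},\mathfrak{B})$ to which Theorem~\ref{t:existence} actually applies. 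The same issue recurs in your treatment of the Kirchberg-$\infty$-fibre case: Dadarlat--Pasnicu alone does not take a $C(\widetilde{\N},\Lambda)$-isomorphism as input, and the paper again routes through \cite[Theorems~6.11 and 5.4]{mdrm:ethy} (together with a semiprojectivity argument for $\mathcal{O}_2$ showing that only finitely many fibres can be AF) before invoking Kirchberg's classification. Once you insert the UMCT step, your argument goes through.
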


\begin{proof}
We first prove (1) in the case that $\mathfrak{A}(\infty)$ is an AF-algebra.  Note that $\mathfrak{B}(\infty)$ is an AF-algebra since $K_{0} ( \mathfrak{C} )_{+} = K_{0} ( \mathfrak{C} )$ for any Kirchberg algebra $\mathcal{C}$ and $K_{0} ( \mathfrak{D} )_{+} \neq K_{0} ( \mathfrak{D} )$ for any non-zero AF-algebra $\mathfrak{D}$.

By \cite[Theorem~6.11]{mdrm:ethy}, there exists an invertible element $\widetilde{\gamma}$ in $E ( \widetilde{\N} , \mathfrak{A} , \mathfrak{B} )$ lifting $\gamma$.  By Theorem~\ref{t:existence}, there exist full $\widetilde{\N}$-equivariant $*$-homomorphisms $\ftn{ \lambda }{ \mathfrak{A}  }{ \mathfrak{B} }$ and $\ftn{ \beta }{ \mathfrak{B} }{ \mathfrak{A}  }$ such that $\underline{K} ( \lambda_{n} ) = \gamma_{n}$, $\underline{K} ( \beta_{n}  ) = ( \gamma^{-1} )_{n}$ for each $n \in \widetilde{\N}$, the $C( \widetilde{\N} , \Z )$-isomorphisms $K_{*} ( \lambda )$ and $K_{*} ( \gamma )$ are equal, and the $C( \widetilde{\N} , \Z )$-isomorphisms $K_{*} ( \beta )$ and $K_{*} ( \gamma^{-1} )$ are equal.  

By Proposition~\ref{p:rrstwc}, $\mathfrak{A}$ and $\mathfrak{B}$ have stable weak cancellation.  Since $\beta \circ \lambda$ and $\id_{\mathfrak{A}}$ are full $\widetilde{\N}$-equivariant $*$-homomorphisms, we have that for each $a$ full in $\mathfrak{A}(U)$, the element $( \beta \circ \lambda )(a)$ is full in $\mathfrak{A}(U)$.  Let $p$ be a projection in $\mathfrak{A}$.  Then $( \beta \circ \lambda )(p)$ and $p$ generate the same ideal $\mathfrak{A}(U)$ for some $U \in \mathbb{O} ( \widetilde{\N} )$.   By construction, $K_{0} ( ( \beta \circ \lambda)_{U} ) = K_{0} ( \id_{ \mathfrak{A}(U)} )$, which implies that $[ (\beta \circ \lambda )(p) ] = [ p ]$ in $K_{0} ( \mathfrak{A}(U) )$.  Since $\mathfrak{A}$ has stable weak cancellation, we have that $(\beta \circ \lambda) (p)$ is Murray-von Neumann equivalent to $p$.  Similarly, for each projection $q$ in $\mathfrak{B}$, we have that $(\lambda \circ \beta)(q)$ is Murray-von Neumann equivalent to $q$.

Using Theorem~\ref{t:uniq} and an approximate intertwining argument, we get a $*$-isomorphism $\ftn{ \phi }{ \mathfrak{A}  }{ \mathfrak{B} }$.  By Lemma~\ref{l:Xlimit}, we have that $\phi$ is an $\widetilde{\N}$-equivariant $*$-isomorphism.  By construction, the $C( \widetilde{\N} , \Z )$-isomorphisms $K_{*} ( \phi )$ and $K_{*} ( \gamma )$ are equal and $\underline{K} ( \phi_{n} ) = \underline{K}( \gamma_{n} )$ for all $n \in \widetilde{\N}$.  By Lemma~\ref{t:totalkthy},  the $C( \widetilde{\N} , \Lambda )$-isomorphisms $\underline{K} ( \phi )$ and $ \gamma $ are equal.  Thus, we have proved (1) for the case that $\mathfrak{A}(\infty)$ is an AF-algebra.

We now prove (1) for the case that $\mathfrak{A}( \infty )$ is a Kirchberg algebra.  Note that there exists a full embedding $\iota$ from $\mathcal{O}_{2}$ to $\mathfrak{A}( \infty )$ since $\mathfrak{A} ( \infty )$ is a Kirchberg algebra.  Let $\ftn{ \overline{\tau}_{\mathfrak{A}} }{ \mathfrak{A} ( \infty ) }{ q_{\infty} ( \{ \mathfrak{A}(n) \} )}$ be the reduce Busby map.  Since $\mathcal{O}_{2}$ is semiprojective, there exists $N \in \N$, such that $\overline{ \tau }_{\mathfrak{A}} \circ \iota$ lifts to a homomorphism from $\mathcal{O}_{2}$ to $\prod_{ n = N }^{\infty} \mathfrak{A}(n)$.  Since the only $*$-homomorphism from a Kirchberg algebra to an AF-algebra is the zero homomorphism, by Lemma~\ref{l:reducedBusby}, we have that 
\begin{align*}
F = \{ n \in \widetilde{\N} : \text{$\mathfrak{A} (n)$ is an AF-algebra} \}
\end{align*}
is finite and a subset of $\N$.  

Arguing as above we have that $\mathfrak{B}( \infty )$ is a Kirchberg algebra.  Hence,
\begin{align*}
G = \{ n \in \widetilde{\N} : \text{$\mathfrak{B} (n)$ is an AF-algebra} \}
\end{align*}
is finite and subset of $\N$.  Since $K_{*} ( \mathfrak{A}(n) ) \cong K_{*} ( \mathfrak{B} (n) )$ as ordered groups, we have that $G = F$.

We have just shown that $\mathfrak{A} \cong \mathfrak{A}( \widetilde{\N} \setminus F ) \oplus \mathfrak{A} ( F )$ and $\mathfrak{A} \cong \mathfrak{B}( \widetilde{\N} \setminus F ) \oplus \mathfrak{B} ( F )$.  Note that $\mathfrak{A}( \widetilde{\N} \setminus F )$ and $\mathfrak{B}( \widetilde{\N} \setminus F )$ are tight $C^{*}$-algebras over $\widetilde{\N} \setminus F$ whose fibers are Kirchberg algebras in $\mathcal{N}$.  The result now follows from \cite[Theorem~6.11 and 5.4]{mdrm:ethy}, Kirchberg's classification of strongly purely infinite $C^{*}$-algebras \cite{kirchpure}, and Elliott's classification of AF-algebras \cite{af}.  Thus we have proved (1) for the case that $\mathfrak{A}(\infty)$ is a Kirchberg algebra.

Since $\mathfrak{A}$ and $\mathfrak{B}$ have stable weak cancellation, (2) now follows from (1) and \cite[Theorem~3.2]{err:strongclass}. 
\end{proof}

\section{A Universal Coefficient Theorem}\label{s:uct}

In this section we prove a universal coefficient theorem (UCT) for $C^\ast$-algebras over $\widetilde{\mathbb N}$ which allows us to improve our classification result. This will be be done using homological algebra in triangulated categories, as done by Ralf Meyer and Ryszard Nest in \cite{rmrn:homalg}.

\subsection{On $C(\widetilde{\mathbb N},\Z)$-modules}

In order to apply the results in \cite{rmrn:homalg} we need a good description of the projective modules and some results on when modules have projective dimension $1$. Our first results will be done for the more general rings $C(X,R)$ (the ring of locally constant functions from $X$ to $R$) for any totally disconnected, metrizable, compact space $X$ and discrete ring $R$.

\begin{lemma}\label{projC(X,R)}
Let $R$ be a discrete ring and $X$ be a totally disconnected, metrizable, compact space. If $P$ is a projective (left or right) $R$-module and $U$ is a clopen subset of $X$, then $C(U,P)$ is a projective (left or right) $C(X,R)$-module.
\end{lemma}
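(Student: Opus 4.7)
The plan is to reduce, via the compatibility of the functor $P\mapsto C(U,P)$ with direct sums, to the single case $P=R$, and then realize $C(U,R)$ as a direct summand of the free $C(X,R)$-module $C(X,R)$ using the idempotent $\chi_U$.

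First I would verify that for any family $\{P_j\}_{j\in J}$ of $R$-modules there is a natural isomorphism of $C(X,R)$-modules
\[
C(U,\textstyle\bigoplus_{j\in J} P_j) \cong \bigoplus_{j\in J} C(U,P_j).
\]
The key point is that $U$ is compact (clopen in the compact space $X$) and the $P_j$ are treated as discrete, so a locally constant function with finite range in $\bigoplus_j P_j$ has finite collective support in $J$ and therefore decomposes into finitely many coordinate components. Taking $P_j=R$ this yields $C(U,R^{(I)})\cong C(U,R)^{(I)}$, and if $P$ is any projective $R$-module, written as a direct summand $P\oplus Q \cong R^{(I)}$, then $C(U,P)\oplus C(U,Q)\cong C(U,R)^{(I)}$. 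Thus it is enough to prove that $C(U,R)$ is a projective $C(X,R)$-module.

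Second, since $U$ is clopen in the totally disconnected space $X$, its characteristic function $\chi_U$ is a (central) idempotent of $C(X,R)$, and we have the decomposition $C(X,R) = \chi_U C(X,R) \oplus \chi_{X\setminus U}C(X,R)$ as (left or right) $C(X,R)$-modules. The restriction map $C(X,R)\to C(U,R)$ is a surjective $C(X,R)$-module homomorphism with kernel $\chi_{X\setminus U}C(X,R)$, and its restriction to $\chi_U C(X,R)$ is an isomorphism onto $C(U,R)$, with inverse given by extension by zero (which is continuous precisely because $U$ is clopen). Therefore $C(U,R)$ is a direct summand of the rank-one free module $C(X,R)$, hence projective.

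The argument is symmetric in the left and right module structures, so both cases are covered. There is no serious obstacle: the two ingredients to keep track of are compactness of $U$, which ensures the direct-sum decomposition commutes with $C(U,-)$, and cloppeness of $U$, which produces the idempotent $\chi_U$ and guarantees the splitting of the restriction map.
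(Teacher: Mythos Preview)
Your proof is correct and follows essentially the same approach as the paper: both use that $P\oplus Q\cong R^{(I)}$, the compatibility of $C(U,-)$ with direct sums (via compactness of $U$), and the clopen splitting $C(X,R)\cong C(U,R)\oplus C(X\setminus U,R)$ to exhibit $C(U,P)$ as a summand of a free $C(X,R)$-module. The paper simply packages these ingredients into the single isomorphism $C(U,P)\oplus C(U,Q)\oplus C(X\setminus U,\bigoplus_I R)\cong \bigoplus_I C(X,R)$, whereas you separate out the intermediate step that $C(U,R)$ is itself projective.
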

\begin{proof}
Since $P$ is $R$-projective there is an $R$-module $Q$ and an index set $I$ such that $P \oplus Q \cong \bigoplus_I R$. It is easily verified that
\[
C(U,P) \oplus C(U,Q) \oplus C(X \setminus U, \bigoplus_I R) \cong \bigoplus_I C(X,R)
\]
as $C(X, R)$-modules.  Since $\bigoplus_I C(X,R)$ is a free $C(X,R)$-module, it follows that $C(U,P)$ is projective.
\end{proof}

\begin{lemma}\label{fingenideal}
Let $R$ be a discrete ring and $X$ be a totally disconnected, metrizable, compact space. Then any finitely generated right (resp.~left) ideal $J$ in $C(X,R)$ is of the form $\bigoplus_{j=1}^n C(U_j,I_j)$ where $U_1,\dots , U_n$ is a clopen partition in $X$ and each $I_j$ is a finitely generated right (resp.~left) ideal of $R$.
\end{lemma}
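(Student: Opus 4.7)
The plan is to argue directly: start with generators, use total disconnectedness and compactness to put them on a common clopen partition, read off the ideals $I_j$ on each piece, and then verify both inclusions.

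First I would pick generators $f_1,\dots,f_n$ of $J$. Each $f_i$ is locally constant on the compact space $X$ and hence has finite image; since $X$ is totally disconnected and metrizable (hence zero-dimensional with a basis of clopens), the preimages $f_i^{-1}(\{r\})$ for $r$ in the image of $f_i$ form a finite clopen partition of $X$. Taking the common refinement over $i=1,\dots,n$, I obtain a single finite clopen partition $U_1,\dots,U_m$ of $X$ such that every $f_i$ is constant, with value $f_i(U_k) \in R$, on each $U_k$. For each $k$, let $I_k$ be the right ideal of $R$ generated by the finite set $\{f_1(U_k),\dots,f_n(U_k)\}$; this is a finitely generated right ideal of $R$.

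Next I would identify $\bigoplus_{k=1}^m C(U_k,I_k)$ with the submodule of $C(X,R)$ consisting of those locally constant functions $g\colon X \to R$ whose restriction to each $U_k$ takes values in $I_k$. Because the $U_k$ are clopen and partition $X$, being locally constant on each $U_k$ with values in $I_k \subseteq R$ is equivalent to being locally constant on $X$ with $g(U_k)\subseteq I_k$ for all $k$. The forward inclusion $J \subseteq \bigoplus_k C(U_k, I_k)$ is then immediate: for $g = \sum_i f_i h_i$ with $h_i \in C(X,R)$, the restriction $g|_{U_k} = \sum_i f_i(U_k)\, h_i|_{U_k}$ visibly takes values in the right ideal $I_k$.

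For the reverse inclusion I would take $g$ with $g(U_k)\subseteq I_k$ for all $k$, further refine each $U_k$ into finitely many clopen pieces on which $g$ is constant (using again that $g$ is locally constant with finite image on the compact set $U_k$), and on each such piece express the constant value as a finite right $R$-combination of the $f_i(U_k)$, which is possible by definition of $I_k$. The finitely many coefficients occurring this way assemble into locally constant functions $h_i \in C(X,R)$ with $g = \sum_i f_i h_i$, so $g \in J$. This yields the claimed decomposition; the left ideal case is symmetric. The only step requiring care is the assembly of the coefficient functions $h_i$ in the reverse inclusion, but since only finitely many clopen pieces and finitely many generators are involved, this is routine and no continuity issue arises.
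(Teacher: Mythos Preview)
Your proof is correct and follows essentially the same approach as the paper: pick generators, pass to a common clopen partition on which they are constant, and let $I_k$ be the right ideal generated by the values $f_i(U_k)$. The only stylistic difference is that the paper replaces your two-inclusion argument by a single chain of equalities using the decomposition $C(X,R)=\bigoplus_j C(U_j,R)$, namely $J=\sum_i f_i\,C(X,R)=\sum_i \bigoplus_j C(U_j,f_i(U_j)R)=\bigoplus_j C(U_j,I_j)$.
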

\begin{proof}
We only prove the right ideal case, since the left ideal case is exactly the same. Let $f_1,\dots ,f_m \in C(X,R)$ be generators of $J$. We may find a clopen partition $U_1,\dots ,U_n$ such that each $f_k$ is constant on $U_j$ for each $j$. Hence it makes sense to say that $f_k(U_j)$ are elements of $R$. Define $I_j = f_1(U_j) R + \dots +f_m(U_j)R$ for each $j$, which are finitely generated right ideals in $R$. We get that
\begin{eqnarray*}
J &=& f_1 C(X,R) + \dots + f_m C(X,R) \\
&=& f_1 \left(\bigoplus_{j=1}^n C(U_j,R) \right) + \dots + f_m \left(\bigoplus_{j=1}^n C(U_j,R) \right) \\
&=& \bigoplus_{j=1}^n C(U_j,f_1(U_j) R) + \dots + \bigoplus_{j=1}^n C(U_j,f_m(U_j) R) \\
&=& \bigoplus_{j=1}^n C(U_j,I_j).
\end{eqnarray*}
\end{proof}

Recall, that a ring $R$ is called left (resp.~right) \emph{semihereditary} if every finitely generated left (resp.~right) ideal is a projective left (resp.~right) $R$-module.

\begin{propo}
Let $R$ be a discrete (left or right) semihereditary ring, and $X$ be a totally disconnected, metrizable, compact space. Any projective (left or right) $C(X,R)$-module is isomorphic to a direct sum of modules of the form $C(U,I)$ where $U$ is a clopen subset of $X$ and $I$ is a finitely generated ideal in $R$.
\end{propo}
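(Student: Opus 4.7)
The plan is to establish that $C(X,R)$ inherits semihereditariness from $R$, and then invoke the classical structure theory of projective modules over semihereditary rings together with Lemma~\ref{fingenideal}. For definiteness I will treat the right-module case; the left case is identical.

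First I would verify that $C(X,R)$ is right semihereditary. Given a finitely generated right ideal $J$ of $C(X,R)$, Lemma~\ref{fingenideal} provides a decomposition $J \cong \bigoplus_{j=1}^n C(U_j, I_j)$ with $U_1,\dots,U_n$ a clopen partition of $X$ and each $I_j$ a finitely generated right ideal of $R$. Since $R$ is semihereditary, each $I_j$ is a projective right $R$-module, so Lemma~\ref{projC(X,R)} ensures that each $C(U_j, I_j)$ is a projective right $C(X,R)$-module. Being a finite direct sum of projectives, $J$ itself is projective, and $C(X,R)$ is therefore right semihereditary.

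Next I would invoke two classical theorems. Kaplansky's theorem decomposes any projective $C(X,R)$-module $P$ as $P = \bigoplus_\alpha P_\alpha$ with each $P_\alpha$ countably generated and projective. Then the classical theorem of Albrecht (valid over any one-sided semihereditary ring) asserts that every countably generated projective module over such a ring is a direct sum of finitely generated one-sided ideals. Applied to each $P_\alpha$, this expresses $P$ as a direct sum of finitely generated right ideals of $C(X,R)$. Finally, applying Lemma~\ref{fingenideal} to each of these finitely generated ideals refines the decomposition into a direct sum of modules of the form $C(U,I)$.

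The main obstacle is not conceptual but one of citation: one must verify that the Albrecht-type structure theorem (every countably generated projective module over a one-sided semihereditary ring is a direct sum of finitely generated one-sided ideals) applies in the possibly non-commutative setting of $C(X,R)$. Granted that, the $C(U,I)$-decomposition is obtained simply by concatenating the two lemmas already established.
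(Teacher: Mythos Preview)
Your proposal is correct and follows essentially the same route as the paper: show $C(X,R)$ is semihereditary via the two lemmas, invoke Albrecht's theorem to write any projective as a direct sum of finitely generated ideals, then apply Lemma~\ref{fingenideal} once more. The paper cites Albrecht directly without the intermediate Kaplansky step (Albrecht's result already covers arbitrary projectives over one-sided semihereditary rings, not just countably generated ones), and your worry about the non-commutative setting is unnecessary since Albrecht's theorem is stated and proved for one-sided semihereditary rings.
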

\begin{proof}
By Lemmas~\ref{projC(X,R)} and \ref{fingenideal}, $C(X,R)$ is (left or right) semihereditary. By \cite{Albrecht:PAMS61}, any projective $C(X,R)$-module is a direct sum of finitely generated ideals in $C(X,R)$. The result now follows from Lemma~\ref{fingenideal}.
\end{proof}

\begin{corol}\label{dirsumbasics}
Any countably generated projective $C(X,\Z)$-module is isomorphic to a countable direct sum of modules of the form $C(U,\Z)$ where $U$ is a clopen subset of $X$.
\end{corol}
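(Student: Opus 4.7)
The plan is to deduce this corollary directly from the preceding proposition by specializing to $R = \Z$, identifying the possible ``coefficient ideals'' that arise, and then using a standard support-of-generators argument to control the cardinality of the direct sum.

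First, I would check that $\Z$ is semihereditary so that the preceding proposition applies (every ideal of $\Z$ is free of rank~0 or~1, hence projective). Thus any projective $C(X,\Z)$-module $M$ is of the form $\bigoplus_{\alpha\in A} C(U_\alpha, I_\alpha)$, where each $I_\alpha$ is a finitely generated ideal in $\Z$, i.e.\ $I_\alpha = n_\alpha \Z$ for some $n_\alpha \geq 0$. Summands with $n_\alpha=0$ are zero and may be dropped. For $n_\alpha \geq 1$, multiplication by $n_\alpha$ gives a bijection $\Z \to n_\alpha \Z$ which is $\Z$-linear and commutes with the pointwise $C(X,\Z)$-action, so it induces a $C(X,\Z)$-module isomorphism $C(U_\alpha, \Z) \cong C(U_\alpha, n_\alpha \Z)$. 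Hence $M \cong \bigoplus_{\alpha \in A} C(U_\alpha, \Z)$.

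Next, I would use countable generation to cut the index set down to a countable one. Let $\{m_i\}_{i=1}^\infty$ be a generating set for $M$. Each $m_i$ has finite support in the direct sum $\bigoplus_{\alpha\in A} C(U_\alpha, \Z)$, so the union of these supports is a countable subset $A_0 \subseteq A$ with $m_i \in \bigoplus_{\alpha \in A_0} C(U_\alpha, \Z)$ for every $i$. The submodule generated by the $m_i$ is therefore contained in $\bigoplus_{\alpha \in A_0} C(U_\alpha, \Z)$; since that submodule is all of $M$, the direct-sum decomposition forces $C(U_\alpha, \Z) = 0$ (i.e.\ $U_\alpha = \emptyset$) for every $\alpha \in A \setminus A_0$. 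Consequently $M \cong \bigoplus_{\alpha \in A_0} C(U_\alpha, \Z)$, which is the desired countable direct sum.

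There is no real obstacle here: the preceding proposition does the structural work, and the two remaining ingredients (identifying $n\Z \cong \Z$ as a $C(X,\Z)$-module via multiplication by $n$, and reducing to countably many summands by the finite-support property of elements of a direct sum) are routine. The only small thing to be careful about is checking that the multiplication-by-$n$ map is $C(X,\Z)$-linear and not merely $\Z$-linear, which holds since $C(X,\Z)$ acts pointwise and commutes with multiplication by a constant integer in the fibre.
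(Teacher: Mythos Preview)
Your proof is correct and follows the same approach as the paper's: apply the preceding proposition using that $\Z$ is semihereditary, identify the nonzero finitely generated ideals of $\Z$ with $\Z$ itself, and reduce to countably many summands via the finite-support argument. The paper compresses all of this into two sentences, but the content is the same; in particular your explicit verification that $C(U,n\Z)\cong C(U,\Z)$ as $C(X,\Z)$-modules is the detail the paper leaves implicit.
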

\begin{proof}
The ring of integers is semihereditary. The countability criterion follows since an uncountable direct sum of non-zero modules can not be countably generated.
\end{proof}

\begin{lemma}\label{l:directlimtprojectivemod}
Let $\{ M_{n} , \phi_{n, n+1 } \}$ be a directed system of projective (left or right) modules over a ring $R$.  Then $\varinjlim ( M_{n} , \phi_{n,n+1} )$ has projective dimension at most 1.
\end{lemma}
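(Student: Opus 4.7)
The plan is to construct an explicit projective resolution of $\varinjlim(M_n,\phi_{n,n+1})$ of length at most one via the classical mapping telescope (shift-minus-identity) construction. First, I would form the direct sum $P = \bigoplus_{n=1}^\infty M_n$, which is projective since an arbitrary direct sum of projective modules is projective. Then I would define an $R$-module homomorphism $\sigma \colon P \to P$ by
\[
\sigma\bigl((m_n)_n\bigr)_k = m_k - \phi_{k-1,k}(m_{k-1})
\]
with the convention $m_0 = 0$ (and interpreting $\phi_{0,1}$ as the zero map), and let $\pi \colon P \to \varinjlim M_n$ be the canonical homomorphism induced by the structural maps $M_n \to \varinjlim M_n$.

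The main content is then to verify that the sequence
\[
0 \longrightarrow P \xrightarrow{\ \sigma\ } P \xrightarrow{\ \pi\ } \varinjlim(M_n,\phi_{n,n+1}) \longrightarrow 0
\]
is exact. Surjectivity of $\pi$ is immediate from the definition of the inductive limit. Injectivity of $\sigma$ follows by induction: if $\sigma((m_n)) = 0$, then $m_k = \phi_{k-1,k}(m_{k-1})$ for all $k$, and combined with $m_0 = 0$ this forces $m_k = 0$ for every $k$. The identity $\pi \circ \sigma = 0$ is clear, since $[\phi_{k-1,k}(m_{k-1})] = [m_{k-1}]$ in $\varinjlim M_n$, so the two contributions telescope. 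For the reverse inclusion $\ker\pi \subseteq \operatorname{im}\sigma$, given $(a_n)\in P$ with $\pi((a_n)) = 0$, I would solve the recursion $b_k = a_k + \phi_{k-1,k}(b_{k-1})$ with $b_0 = 0$; then $\sigma((b_n)) = (a_n)$ by construction, and the condition $\pi((a_n)) = 0$ combined with the defining property of the inductive limit ensures that $b_k = \phi_{N,k}(b_N)$ eventually vanishes, so $(b_n) \in P$.

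Once exactness is established, we have exhibited a projective resolution of $\varinjlim(M_n,\phi_{n,n+1})$ of length $\leq 1$, which by definition means $\varinjlim(M_n,\phi_{n,n+1})$ has projective dimension at most $1$.

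The step likely to require the most care is verifying that the solution $(b_n)$ to the recursion actually lies in the direct sum (as opposed to the direct product), i.e.\ that $b_k = 0$ for $k$ large enough. This is the only place where the hypothesis $\pi((a_n))=0$ is genuinely used, and it is the bookkeeping heart of the mapping telescope argument; every other verification is a routine computation with the formula for $\sigma$.
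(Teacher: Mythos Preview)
Your proof is correct and follows essentially the same approach as the paper: both use the telescope map on $\bigoplus_n M_n$, with your $\sigma$ being precisely the paper's $\id - \psi$. You actually supply more detail than the paper, which simply asserts injectivity of $\id - \psi$ and the isomorphism $\coker(\id - \psi) \cong \varinjlim M_n$ without spelling out the $\ker\pi \subseteq \operatorname{im}\sigma$ step.
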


\begin{proof}
Define $\ftn{ \psi }{ \bigoplus_{ n =1}^{\infty} M_{n} }{ \bigoplus_{ n = 1}^{\infty} M_{n} }$ by $\psi ( \{ x_{n} \} ) = ( 0 , \phi_{1,2} (x_{1}) , \phi_{2,3} ( x_{2} ) , \dots )$.  A computation shows that $\id - \psi$ is injective.  Therefore, 
\begin{align*}
0 \to \bigoplus_{ n =1}^{\infty} M_{n} \to \bigoplus_{ n =1}^{\infty} M_{n} \to \coker( \id - \psi ) \to 0
\end{align*}
is a projective resolution of length 1 for $\coker( \id - \psi )$.  Since $\coker( \id - \psi ) \cong \varinjlim ( M_{n} , \phi_{n, n+1} )$, the lemma follows. 
\end{proof}

We now restrict our attention to the ring $C(\widetilde{\mathbb N},\Z)$. Given a $C(\widetilde{\mathbb N},\Z)$-module $M$, let $M_n$ (for $n\in \mathbb N$) be the direct summand generated by cutting down the module with the characteristic function on $\{n\} \subseteq \widetilde{\mathbb N}$. Then $\bigoplus_{n\in \mathbb N} M_n$ is a submodule of $M$ and we denote the quotient by $M_\infty$.

In particular, suppose $\mathfrak A$ is a $C^\ast$-algebra over $\widetilde{\N}$. Then $(K_i(\mathfrak A))_n \cong K_i(\mathfrak A(n))$ naturally for $n \in \N$. Moreover, since $\bigoplus_{n\in \mathbb N} K_i(\mathfrak A(n)) \cong K_i(\mathfrak A(\N))$ naturally, and the map $K_i(\mathfrak A(\N)) \to K_i(\mathfrak A)$ is induced by the coordinate inclusions, and is thus injective, it follows that $K_i(\mathfrak A)_\infty \cong K_i(\mathfrak A(\infty))$ naturally.

\begin{propo}\label{projdim1}
Let $M$ be a countably generated $C(\widetilde{\mathbb N},\Z)$-module. If $M_\infty$ is torsion-free as an abelian group then $M$ has projective dimension less than $1$.
\end{propo}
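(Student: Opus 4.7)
The strategy is to use the canonical short exact sequence
\[
0 \to JM \to M \to M_\infty \to 0,
\]
where $J = \bigoplus_{n \in \N} C(\{n\},\Z) \subseteq C(\widetilde{\N},\Z)$ is the kernel of $\mathrm{ev}_\infty$, so that $JM = \bigoplus_n M_n$. The plan is to establish separately that $JM$ and $M_\infty$ each have projective dimension at most $1$, and then conclude by the standard horseshoe-lemma estimate that $M$ itself has projective dimension at most the maximum of these, namely $1$.

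For $JM$: each $M_n = \chi_{\{n\}} M$ is a countable $\Z$-module whose $C(\widetilde{\N},\Z)$-structure factors through the evaluation $\mathrm{ev}_n$. A length-one free $\Z$-resolution of $M_n$ lifts to a length-one projective $C(\widetilde{\N},\Z)$-resolution by identifying each free rank-one $\Z$-summand with $C(\{n\},\Z)$, which is projective by Lemma~\ref{projC(X,R)}. Summing over $n$ then produces a length-one projective resolution of $JM = \bigoplus_n M_n$.

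For $M_\infty$: since $JM_\infty = 0$, the $C(\widetilde{\N},\Z)$-action on $M_\infty$ factors through $\mathrm{ev}_\infty$, so $M_\infty$ is a countable torsion-free abelian group with $C(\widetilde{\N},\Z)$-structure determined by $\mathrm{ev}_\infty$. Writing $M_\infty = \bigcup_k G_k$ as the union of its finitely generated (hence free) abelian subgroups $G_k \cong \Z^{n_k}$, the key observation will be that for any exhausting increasing sequence of finite subsets $F_1 \subseteq F_2 \subseteq \cdots$ of $\N$ there is a natural isomorphism
\[
G_k \;\cong\; \varinjlim_m C(\widetilde{\N}\setminus F_m,\Z)^{n_k}
\]
of $C(\widetilde{\N},\Z)$-modules, with the transition maps given by coordinate-wise restriction; this uses only that any locally constant $\Z$-valued function defined on a cofinite subset of $\widetilde{\N}$ containing $\infty$ is equal to its value at $\infty$ on all sufficiently small cofinite neighborhoods of $\infty$. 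Each $C(\widetilde{\N}\setminus F_m,\Z)^{n_k}$ is projective, being a finite direct sum of summands of $C(\widetilde{\N},\Z)$. Assembling these representations over both indices $(k,m) \in \N \times \N$ with the product order and passing to the cofinal diagonal exhibits $M_\infty$ as a countable direct limit of projective $C(\widetilde{\N},\Z)$-modules, and Lemma~\ref{l:directlimtprojectivemod} then yields $\mathrm{pd}(M_\infty) \leq 1$.

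The main subtlety, and the step where torsion-freeness is used crucially, is the identification $G_k \cong \varinjlim_m C(\widetilde{\N}\setminus F_m,\Z)^{n_k}$: without torsion-freeness the subgroups $G_k$ need not be free abelian, and so would not admit such a representation as a direct limit of \emph{projective} $C(\widetilde{\N},\Z)$-modules (the naive approach using the resolutions $0 \to J^{n_k} \to C(\widetilde{\N},\Z)^{n_k} \to G_k \to 0$ only yields the weaker bound $\mathrm{pd}(M_\infty) \leq 2$). With $\mathrm{pd}(JM) \leq 1$ and $\mathrm{pd}(M_\infty) \leq 1$ in hand, the short exact sequence bound completes the proof.
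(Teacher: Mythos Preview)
Your proof is correct and follows essentially the same approach as the paper's. Both arguments split the problem via the extension $0 \to \bigoplus_n M_n \to M \to M_\infty \to 0$, handle $\bigoplus_n M_n$ by lifting length-one $\Z$-resolutions through $C(\{n\},-)$, realise $M_\infty$ as a sequential direct limit of projective modules of the form $C([k,\infty],\Z)^{n_k}$ in order to invoke Lemma~\ref{l:directlimtprojectivemod}, and finish with the horseshoe lemma. The only cosmetic difference is that the paper writes down the diagonal system $\Z^{N_k}\otimes_\Z C([k,\infty],\Z)$ directly, whereas you first identify each $G_k$ as $\varinjlim_m C(\widetilde{\N}\setminus F_m,\Z)^{n_k}$ and then pass to the diagonal of the resulting double system; with $F_k=\{1,\dots,k-1\}$ the two constructions coincide.
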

\begin{proof}
For any $n\in \mathbb N$, the module $M_n$ has projective dimension less than $1$. To see this, let $0 \to P_1 \to P_0 \to M_n \to 0$ be a length $1$ projective resolution of the abelian groups for $M_n$. Then the induced sequence $0 \to C(\{n\},P_1) \to C(\{n\},P_0) \to M_n \to 0$ is a $C(\widetilde{\mathbb N},\Z)$-projective resolution of length $1$. Hence the module $\bigoplus_{n\in \mathbb N} M_n$ has projective dimension less than $1$.

Since $M$ is countably generated, so is the abelian group $M_\infty$. Hence $M_\infty$ can be written as an inductive limit of a system of finitely generated free abelian groups, say $\Z^{N_1} \xrightarrow{f_1} \Z^{N_2} \xrightarrow{f_2} \Z^{N_3} \to \dots$. For each natural number denote by $[n,\infty]$ the clopen set $\{ n , n+1,\dots ,\infty\} \subseteq \widetilde{\mathbb N}$. Let $i_n : C([n,\infty],\Z) \to C([n+1,\infty],\Z)$ be the canonical projection. Consider the inductive system of $C(\widetilde{\mathbb N},\Z)$-modules
\[
\Z^{N_1}\otimes_\Z C([1,\infty],\Z) \xrightarrow{f_1\otimes i_1} \Z^{N_2} \otimes_\Z C([2,\infty],\Z) \xrightarrow{f_2\otimes i_2} \Z^{N_3} \otimes_\Z C([3,\infty],\Z) \to \dots.
\]
The direct limit of the system is easily seen to be $M_\infty$. Since $\Z^{N_n} \otimes_\Z C([n,\infty],\Z)$ is isomorphic to a direct sum of modules $C([n,\infty],\Z)$ which are projective by Lemma~\ref{projC(X,R)}, $M_\infty$ is an inductive limit of projective modules, and by Lemma~\ref{l:directlimtprojectivemod}, has projective resolution of length less than $1$.

Now $M$ is an extension of $M_\infty$ by $\bigoplus_{n\in \mathbb N} M_n$, where $M_\infty$ has projective dimension less than $1$. Since $\bigoplus_{n\in \mathbb N} M_n$ has projective dimension less than $1$ it follows easily from the horseshoe lemma (see e.g. \cite[Lemma~2.2.8]{Weibel:CamPress}), that $M$ has projective dimension less than 1.
\end{proof}

\subsection{The UCT}
Let $\mathfrak E(X)$ denote the $E(X)$-theory category with objects being separable $C^\ast$-algebras over $X$ and morphisms from $\mathfrak A$ to $\mathfrak B$ being the elements of $E_0(X;\mathfrak A,\mathfrak B)$. It is proven in \cite{mdrm:ethy} that $\mathfrak E(X)$ is a triangulated category with exact triangles isomorphic to diagrams arising from extensions of $C^\ast$-algebras over $X$. 

\begin{defin}
We define the (classical) \emph{$E$-theoretic bootstrap class} $\mathcal B_E$ to be the $\aleph_0$-localising subcategory of $\mathfrak E$ generated by $\mathbb C$.

We define the \emph{$E(X)$-theoretic bootstrap class} $\mathcal B_E(X)$ to be the full subcategory of $\mathfrak E(X)$ of objects $\mathfrak A$ such that $\mathfrak A(U)$ is an object of $\mathcal B_E$ for any open subset $U$ of $X$.
\end{defin}

The above definition is made in \cite{mdrm:ethy} for any second countable, sober space $X$, but we will only be considering the case where $X$ is a totally disconnected, metrizable, compact space.  We will prove the following.

\begin{theor}\label{UCT}
Let $X$ be a totally disconnected, metrizable, compact space, and let $\mathfrak A$ be an object of $\mathcal B_E(X)$. If $K_\ast(\mathfrak A)$ has $C(X,\Z)$-projective dimension 1, then for any separable $C^\ast$-algebra $\mathfrak B$ over $X$ there is a short exact sequence
\[
0 \to \mathrm{Ext}_{C(X,\Z)} (K_\ast(\mathfrak A), K_{\ast +1} (\mathfrak B)) \to E(X;\mathfrak A,\mathfrak B) \to \Hom_{C(X,\Z)}(K_\ast(\mathfrak A) , K_\ast(\mathfrak B)) \to 0
\]
which is natural in both variables.
\end{theor}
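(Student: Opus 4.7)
The plan is to apply the Meyer--Nest framework of homological algebra in triangulated categories \cite{rmrn:homalg} to the $E(X)$-theory category $\mathfrak{E}(X)$ with the homological ideal $\mathcal{I} = \ker K_{*}$. The main tasks are to identify enough $\mathcal{I}$-projective objects whose $K$-theory realizes all projective $C(X,\Z)$-modules, and then to leverage the projective-dimension-one hypothesis to collapse a projective resolution into an exact triangle.

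First, I would produce a family of $\mathcal{I}$-projective generators. For each clopen subset $U \subseteq X$, consider $\mathfrak{P}_{U} := i_{U}(C(U))$ as a $C^{*}$-algebra over $X$. Then $K_{0}(\mathfrak{P}_{U}) \cong C(U,\Z)$ as $C(X,\Z)$-modules, $K_{1}(\mathfrak{P}_{U}) = 0$, and each $\mathfrak{P}_{U}$ lies in $\mathcal{B}_{E}(X)$. For any separable $C^{*}$-algebra $\mathfrak{B}$ over $X$, the clopen splitting $\mathfrak{B} \cong \mathfrak{B}(U) \oplus \mathfrak{B}(X \setminus U)$ combined with the adjunction between $i_{U}$ and restriction \cite[Proposition~3.12]{rmrn:bootstrap} and the classical UCT for the bootstrap-class algebra $C(U)$ should yield a natural isomorphism
\[
E(X; \mathfrak{P}_{U}, \mathfrak{B}) \cong \Hom_{C(X,\Z)}(C(U,\Z), K_{*}(\mathfrak{B})).
\]
This identification both establishes the $\mathcal{I}$-projectivity of $\mathfrak{P}_{U}$ and provides the required ``universal property'' of the generators.

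Second, by Corollary~\ref{dirsumbasics}, every countably generated projective $C(X,\Z)$-module is a countable direct sum of modules of the form $C(U,\Z)$ for $U$ clopen, and is thus realized (allowing suspensions in the odd part) as $K_{*}$ of a countable direct sum of $\mathfrak{P}_{U}$'s. Since $\mathfrak{A}$ is separable, $K_{*}(\mathfrak{A})$ is countably generated. Fix a projective resolution
\[
0 \to P_{1} \to P_{0} \to K_{*}(\mathfrak{A}) \to 0
\]
of length at most one, which exists by hypothesis. Realize $P_{0}$ and $P_{1}$ as $K_{*}(\mathfrak{Q}_{0})$ and $K_{*}(\mathfrak{Q}_{1})$, respectively, for suitable $\mathcal{I}$-projective algebras $\mathfrak{Q}_{0}, \mathfrak{Q}_{1}$ built from Step~1. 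Using the $\mathcal{I}$-projectivity of $\mathfrak{Q}_{0}$, lift the surjection $K_{*}(\mathfrak{Q}_{0}) \twoheadrightarrow K_{*}(\mathfrak{A})$ to a morphism $\mathfrak{Q}_{0} \to \mathfrak{A}$ in $\mathfrak{E}(X)$ and complete it to an exact triangle
\[
\mathfrak{N} \to \mathfrak{Q}_{0} \to \mathfrak{A} \to \Sigma \mathfrak{N}.
\]
The $K_{*}$ long exact sequence and injectivity of $P_{1}\to P_{0}$ force $K_{*}(\mathfrak{N}) \cong P_{1}$, so $\mathfrak{N}$ is itself $\mathcal{I}$-projective.

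Finally, apply the contravariant functor $E(X; -, \mathfrak{B})$ to the triangle. Substituting the Step~1 identifications for $\mathfrak{Q}_{0}$ and $\mathfrak{N}$, the long exact sequence collapses: $\Hom_{C(X,\Z)}(K_{*}(\mathfrak{A}), K_{*}(\mathfrak{B}))$ appears as the kernel of the induced map $\Hom(P_{0}, K_{*}(\mathfrak{B})) \to \Hom(P_{1}, K_{*}(\mathfrak{B}))$, while $\Ext^{1}_{C(X,\Z)}(K_{*}(\mathfrak{A}), K_{*+1}(\mathfrak{B}))$ arises as the corresponding cokernel, yielding the short exact sequence. Naturality in both variables follows from the functoriality of the construction. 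The hard part will be Step~1: rigorously establishing the natural $C(X,\Z)$-linear identification $E(X; \mathfrak{P}_{U}, \mathfrak{B}) \cong \Hom_{C(X,\Z)}(C(U,\Z), K_{*}(\mathfrak{B}))$. This requires verifying that the adjunction/restriction machinery is compatible with the $C(X,\Z)$-module structures on both sides and that the classical UCT isomorphism for $C(U)$ transports to an equality of $C(X,\Z)$-module homomorphisms --- essentially the statement that the $\mathcal{I}$-projective objects see exactly the module structure on $K$-theory.
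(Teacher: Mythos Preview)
Your overall strategy --- Meyer--Nest homological algebra in $\mathfrak{E}(X)$ with $\mathcal{I} = \ker K_*$, using the objects $\mathfrak{P}_U = i_U(C(U))$ as generators --- is exactly the paper's approach. The identification you flag as ``the hard part'' in Step~1 is in fact the easy part: the paper obtains $E(X;C(U),\mathfrak{B}) \cong K_0(\mathfrak{B}(U)) \cong \Hom_{C(X,\Z)}(C(U,\Z),K_*(\mathfrak{B}))$ directly from \cite[Lemma~2.30]{mdrm:ethy}, and this, combined with Corollary~\ref{dirsumbasics}, is precisely the content of Proposition~\ref{universalfunctor}.

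The genuine gap is elsewhere. You never use the hypothesis that $\mathfrak{A}$ lies in the bootstrap class $\mathcal{B}_E(X)$, and the argument fails without it. The problematic step is the sentence ``so $\mathfrak{N}$ is itself $\mathcal{I}$-projective''. Knowing that $K_*(\mathfrak{N}) \cong P_1$ is a projective module does \emph{not} imply that $\mathfrak{N}$ is $\mathcal{I}$-projective in $\mathfrak{E}(X)$; equivalently, it does not imply that $E(X;\mathfrak{N},\mathfrak{B}) \cong \Hom_{C(X,\Z)}(K_*(\mathfrak{N}),K_*(\mathfrak{B}))$. Your ``Step~1 identification'' was only established for the concrete objects $\mathfrak{Q}_0$, $\mathfrak{Q}_1$ built as direct sums of $\mathfrak{P}_U$'s, not for an arbitrary cone $\mathfrak{N}$. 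To repair this you would lift the isomorphism $K_*(\mathfrak{Q}_1) \cong P_1 \cong K_*(\mathfrak{N})$ to a morphism $\mathfrak{Q}_1 \to \mathfrak{N}$ and then argue that its cone, which has vanishing $K$-theory, is $E(X)$-contractible. That last assertion is exactly where the bootstrap hypothesis enters, and it is not automatic.

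The paper handles this by invoking \cite[Theorems~3.41 and~4.4]{rmrn:homalg} together with the verification that every object of $\mathcal{B}_E(X)$ lies in the localising subcategory generated by the $\mathcal{I}$-projective objects. The latter is the substantive step you are missing: it uses \cite[Propositions~6.5 and~6.10]{mdrm:ethy} to write any $\mathfrak{A} \in \mathcal{B}_E(X)$, up to $E(X)$-equivalence, as a direct limit of finite direct sums $\bigoplus_j C(U_j,\mathfrak{A}_j)$ with $\mathfrak{A}_j \in \mathcal{B}_E$ having finitely generated $K$-theory. Once that is in place, your triangle argument (or, equivalently, the abstract Meyer--Nest machinery) goes through.
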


The main part of the proof of the above theorem is contained in Proposition~\ref{universalfunctor} below. We refer the reader to \cite{rmrn:homalg} for the relevant definition.

We will consider the $K$-theory as a covariant functor $K_\ast \colon \mathfrak E(X) \to \Mod_{C(X,\Z)}^{\Z_2,c}$. Here $\Mod_{C(X,\Z)}^{\Z_2,c}$ denotes the category of countably generated $\Z_2$-graded $C(X,\Z)$-modules with evenly graded morphisms.  This category is stable with suspension automorphism functor $\Sigma$ which interchanges the $\Z_2$-grading.

\begin{propo}\label{universalfunctor}
Let $X$ be a totally disconnected, metrizable, compact space and let $\mathfrak I = \ker K_\ast$. Then $\mathfrak E(X)$ has enough $\mathfrak I$-projective objects and $K_\ast \colon \mathfrak E(X) \to \Mod_{C(X,\Z)}^{\Z_2,c}$ is the universal $\mathfrak I$-exact, stable, homological functor.
\end{propo}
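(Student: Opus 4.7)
The plan is to verify two properties of the functor $K_\ast\colon \mathfrak E(X)\to \Mod_{C(X,\Z)}^{\Z_2,c}$: first, that $\mathfrak E(X)$ has enough $\mathfrak I$-projective objects for $\mathfrak I = \ker K_\ast$; and second, that the universality then follows from the Meyer--Nest framework of \cite{rmrn:homalg}.

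The candidate projective generators will be the $C^\ast$-algebras $R_U := i_U(C(U))$ for clopen subsets $U\subseteq X$, where $C(U)$ is viewed as a continuous tight $C^\ast$-algebra over the compact, totally disconnected space $U$. Using the natural decomposition $\mathfrak B \cong \mathfrak B(U) \oplus \mathfrak B(X\setminus U)$ for clopen $U$, together with the standard adjunction between $i_U$ and the restriction functor (cf.~\cite{rmrn:bootstrap,mdrm:ethy}) and the identification $E(U; C(U), \mathfrak D) \cong K_0(\mathfrak D)$ for any $C^\ast$-algebra $\mathfrak D$ over $U$, one obtains a natural isomorphism
\[
E(X; R_U, \mathfrak B) \;\cong\; K_0(\mathfrak B(U)) \;\cong\; \Hom_{C(X,\Z)}\bigl(C(U,\Z),\, K_\ast(\mathfrak B)\bigr),
\]
and an analogous statement for $\Sigma R_U$ with $K_1$. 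I read off two facts: the $C(X,\Z)$-module $K_\ast(R_U)$ is $C(U,\Z)$ concentrated in even degree, and the canonical evaluation $E(X; R_U, \mathfrak B)\to \Hom_{C(X,\Z)}(K_\ast(R_U), K_\ast(\mathfrak B))$ is a bijection. In particular, $R_U$ and $\Sigma R_U$ are $\mathfrak I$-projective, and the same then holds for countable direct sums of such objects, since $E$-theory preserves countable coproducts in the first variable.

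Given any $\mathfrak A \in \mathfrak E(X)$, the $C(X,\Z)$-module $K_\ast(\mathfrak A)$ is countably generated, so by Corollary~\ref{dirsumbasics} I choose a surjection $\pi\colon \bigoplus_{i\in I} C(U_i,\Z) \twoheadrightarrow K_\ast(\mathfrak A)$ from a countable direct sum of projective atoms, with appropriate $\Z_2$-gradings on the summands. By the representability above, each generator of this sum lifts to a morphism $\Sigma^{\epsilon_i} R_{U_i}\to \mathfrak A$ in $\mathfrak E(X)$, and assembling these gives a morphism $\mathfrak P := \bigoplus_i \Sigma^{\epsilon_i} R_{U_i} \to \mathfrak A$ whose image under $K_\ast$ is $\pi$. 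This morphism is $\mathfrak I$-epic and $\mathfrak P$ is $\mathfrak I$-projective, proving that $\mathfrak E(X)$ has enough $\mathfrak I$-projectives.

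For universality I will appeal to the Meyer--Nest criterion: a stable homological functor with kernel $\mathfrak I$ is the universal $\mathfrak I$-exact stable homological functor provided it is fully faithful on a set of $\mathfrak I$-projectives whose images generate the target abelian category under direct sums. $K_\ast$ is by construction stable and homological, it is fully faithful on $\{R_U, \Sigma R_U : U \text{ clopen}\}$ by the representability computation, and the $K_\ast$-images $C(U,\Z)$ generate $\Mod_{C(X,\Z)}^{\Z_2,c}$ under direct sums by Corollary~\ref{dirsumbasics}. I expect the main obstacle to be establishing the representability identification $E(X; R_U, \mathfrak B) \cong K_0(\mathfrak B(U))$ cleanly and tracking the $C(X,\Z)$-module structure throughout; once this is in hand, both enough projectives and universality follow formally.
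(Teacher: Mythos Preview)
Your proposal is correct and follows essentially the same route as the paper: both arguments hinge on the representability isomorphism $E(X;i_U(C(U)),\mathfrak B)\cong K_0(\mathfrak B(U))\cong \Hom_{C(X,\Z)}(C(U,\Z),K_\ast(\mathfrak B))$ for clopen $U$, combined with Corollary~\ref{dirsumbasics} and countable additivity of $E(X)$ in the first variable, and then feed this into the Meyer--Nest machinery of \cite{rmrn:homalg}. The only cosmetic difference is that the paper packages the representability as the existence of a partially defined left adjoint $K_\ast^\dagger$ on projective modules (invoking \cite[Proposition~3.39 and Remark~3.42]{rmrn:homalg} directly), whereas you unpack this into an explicit construction of $\mathfrak I$-epic morphisms from projectives and a ``fully faithful on generators'' criterion; these are two phrasings of the same argument.
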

\begin{proof}
We will use \cite[Proposition 3.39 and Remark 3.42]{rmrn:homalg}. Note that $K_\ast \colon \mathfrak E(X) \to \Mod_{C(X,\Z)}^{\Z_2,c}$ is clearly an $\mathfrak I$-exact, stable, homological functor, and $\Mod_{C(X,\Z)}^{\Z_2,c}$ is a stable, abelian category with enough projective objects. 

Let $U\subseteq X$ be clopen. Any homomorphism of $C(X,\Z)$-modules $C(U,\Z) \to M$ is uniquely defined by an element in $M\delta_U$, where $\delta_U$ is the characteristic function on $U$. Hence if $\mathfrak B$ is a $C^\ast$-algebra over $X$, then
\[
\Hom_{C(X,\Z)} (\Sigma^j(C(U,\Z),0),K_\ast(\mathfrak B)) \cong K_j(\mathfrak B) \delta_U \cong K_j(\mathfrak B(U)) \cong E_j(X;C(U),\mathfrak B),
\]
for $j=0,1$, where the last (natural) isomorphism follows from \cite[Lemma 2.30]{mdrm:ethy}.  By Corollary~\ref{dirsumbasics}, it follows that any $\Z_2$-graded projective $C(X,\Z)$-module $P$ is of the form
\[
P \cong \bigoplus_{i\in I} (C(U_i,\Z),0) \oplus \bigoplus_{j\in J} (0,C(U_j,\Z))
\]
where each $U_i$ and $U_j$ are clopen subsets of $X$ and $I$ and $J$ are countable index sets. For any such projective module and any $C^\ast$-algebra $\mathfrak B$ over $X$ we get that
\begin{eqnarray*}
&& \Hom_{C(X,\Z)}(P,K_\ast(\mathfrak B))\\
&\cong& \prod_{i\in I} \Hom_{C(X,\Z)}((C(U_i,\Z),0),K_\ast(\mathfrak B)) \oplus \prod_{j\in J} \Hom_{C(X,\Z)}((0,C(U_j,\Z)),K_\ast(\mathfrak B)) \\
&\cong& \prod_{i\in I} E(X;C(U_i),\mathfrak B) \oplus \prod_{j\in J} E_1(X;C(U_j),\mathfrak B) \\
& \cong & E(X;\bigoplus_{i\in I} C(U_i) \oplus \bigoplus_{j\in J}C(U_j,C_0(\R)),\mathfrak B),
\end{eqnarray*}
by countable additivity in the first variable of the $E(X)$ bifunctor. Hence there is a partially defined left adjoint $K_\ast^\dagger$ of $K_\ast$, which is defined on the full subcategory of projective modules. Obviously $K_\ast \circ K_\ast^\dagger ( P) \cong P$ for any countably generated $\Z_2$-graded projective $C(X,\Z)$-module $P$, and thus it follows from \cite[Proposition 3.39 and Remark 3.42]{rmrn:homalg} that $K_\ast$ is the universal $\mathfrak I$-exact stable homological functor.
\end{proof}

\begin{proof}[Proof of Theorem~\ref{UCT}]
By Proposition~\ref{universalfunctor} and \cite[Theorems 3.41 and 4.4]{rmrn:homalg} it suffices to show that any $C^\ast$-algebra $\mathfrak A$ over $X$ in $\mathcal B_E(X)$ is in the localising subcategory generated by the $\mathfrak I$-projective objects. To see this, note that a simple bootstrapping argument implies that $E(X;\mathfrak A,\mathfrak B)=0$ for every $\mathfrak I$-contractible $\mathfrak B$ if and only if $\mathfrak A$ is in the localising subcategory in $\mathfrak E(X)$ generated by $\mathfrak I$-projective objects. From \cite[Propositions 6.10 and 6.5]{mdrm:ethy} we get that $\mathfrak A$ is $E(X)$-equivalent to a direct limit of $C^\ast$-algebras over $X$ of the form $\bigoplus_{j=1}^n C(U_j,\mathfrak A_k)$ where $U_1,\dots ,U_n$ is a clopen partition of $X$ and each $\mathfrak A_k$ is in $\mathcal B_E$ with finitely generated $K$-theory. But since these are obviously in the localising subcategory generated by the $\mathfrak I$-projective objects, and this is closed under taking direct limits, the result follows.
\end{proof}

\begin{corol}\label{c:uct}
Let $\mathfrak A$ and $\mathfrak B$ be separable $C^\ast$-algebras over $\widetilde{\mathbb{N}}$ where $\mathfrak A$ is an object of the bootstrap class $\mathcal B_E(\widetilde{\mathbb{N}})$. If $K_\ast(\mathfrak A( \infty))$ is torsion-free then there is a short exact sequence
\[
0 \to \Ext_{C(\widetilde{\mathbb{N}},\Z)} (K_\ast(\mathfrak A), K_{\ast +1} (\mathfrak B)) \to E(\widetilde{ \mathbb{N}};\mathfrak A,\mathfrak B) \to \Hom_{C(\widetilde{ \mathbb{N}},\Z)}(K_\ast(\mathfrak A) , K_\ast(\mathfrak B)) \to 0
\]
which is natural in both variables. Moreover, if $\mathfrak B$ is also an object of $\mathcal B_E(\widetilde{ \mathbb{N}})$ for which $K_\ast(\mathfrak B( \infty))$ is torsion-free, then the short exact sequence splits (unnaturally).
\end{corol}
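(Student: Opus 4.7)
The plan is to reduce the first statement to Proposition~\ref{projdim1} and Theorem~\ref{UCT}, and to obtain the splitting from a geometric projective resolution of $\mathfrak A$ via the Meyer--Nest framework.

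For the existence of the exact sequence, view $K_\ast(\mathfrak A)$ as a countably generated $\Z_2$-graded $C(\widetilde{\N},\Z)$-module. The observation in the text preceding Proposition~\ref{projdim1} identifies $K_\ast(\mathfrak A)_\infty$ with $K_\ast(\mathfrak A(\infty))$, which is torsion-free by hypothesis. Proposition~\ref{projdim1} then gives that $K_\ast(\mathfrak A)$ has $C(\widetilde{\N},\Z)$-projective dimension at most $1$, and Theorem~\ref{UCT} delivers the natural exact sequence.

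For the splitting under the added hypotheses on $\mathfrak B$, the plan is to construct a geometric length-$1$ projective resolution of $\mathfrak A$. Fix a module resolution
\[
0 \to P_1 \to P_0 \to K_\ast(\mathfrak A) \to 0;
\]
by Corollary~\ref{dirsumbasics} each $P_i$ is a countable direct sum of modules of the form $C(U,\Z)$ (with possible grading shift). The partial left adjoint $K_\ast^\dagger$ from the proof of Proposition~\ref{universalfunctor} realises each $P_i$ as the $K$-theory of an $\mathfrak I$-projective object $\mathfrak P_i$ in $\mathcal B_E(\widetilde{\N})$, namely a countable direct sum of copies of $C(U)$ and $\Sigma C(U)$. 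The Meyer--Nest machinery \cite[Proposition~3.39, Theorem~4.4]{rmrn:homalg} then lifts the module resolution to an exact triangle $\mathfrak P_1 \to \mathfrak P_0 \xrightarrow{p} \mathfrak A \to \Sigma \mathfrak P_1$ in $\mathfrak E(\widetilde{\N})$. Given $\alpha\in \Hom_{C(\widetilde{\N},\Z)}(K_\ast(\mathfrak A), K_\ast(\mathfrak B))$, the adjointness
\[
E(\widetilde{\N}; \mathfrak P_i, \mathfrak B) \cong \Hom_{C(\widetilde{\N},\Z)}(P_i, K_\ast(\mathfrak B))
\]
supplies a lift $\widetilde\alpha_0 \in E(\widetilde{\N}; \mathfrak P_0, \mathfrak B)$ of $\alpha\circ K_\ast(p)$; its precomposition with $\mathfrak P_1 \to \mathfrak P_0$ vanishes on $K$-theory, and hence in $E$-theory by the same adjointness applied to $\mathfrak P_1$, so $\widetilde\alpha_0$ factors through $p$ as some $\widetilde\alpha \in E(\widetilde{\N}; \mathfrak A, \mathfrak B)$ inducing $\alpha$. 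The assignment $\alpha\mapsto \widetilde\alpha$ is the desired splitting, unnatural because of the choices of resolution and of lift.

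The hypothesis that $\mathfrak B\in \mathcal B_E(\widetilde{\N})$ with $K_\ast(\mathfrak B(\infty))$ torsion-free enters only indirectly, through the symmetric role of $\mathfrak B$ in the UCT; the main technical hurdle will be confirming that $K_\ast^\dagger$ produces $\mathfrak I$-projective lifts in $\mathcal B_E(\widetilde{\N})$ compatible with the possibly infinite direct sums appearing in the resolution, and that the adjointness from Proposition~\ref{universalfunctor} remains valid in that setting. Both reduce to countable additivity of $E(\widetilde{\N};-,-)$ in the first variable, but must be handled carefully in the $\Z_2$-graded category.
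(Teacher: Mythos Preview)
Your argument for the existence of the short exact sequence is correct and identical to the paper's: Proposition~\ref{projdim1} gives projective dimension at most $1$, and Theorem~\ref{UCT} applies.

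The splitting argument, however, has a genuine gap. Your assignment $\alpha\mapsto\widetilde\alpha$ is not a group homomorphism. The element $\widetilde\alpha_0\in E(\widetilde{\N};\mathfrak P_0,\mathfrak B)$ is indeed uniquely determined by the adjointness isomorphism, but the factorisation $\widetilde\alpha$ through $p$ is only well-defined modulo the image of $E(\widetilde{\N};\Sigma\mathfrak P_1,\mathfrak B)\to E(\widetilde{\N};\mathfrak A,\mathfrak B)$, and that image is precisely the $\Ext$ term in the UCT sequence. So what you have really constructed is the inverse of the isomorphism $E(\widetilde{\N};\mathfrak A,\mathfrak B)/\Ext\cong\Hom$, which is just surjectivity of the UCT map --- already part of the exactness statement. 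A further symptom is that your argument never uses the hypothesis on $\mathfrak B$; if it were valid, the sequence would split for arbitrary $\mathfrak B$, which the paper does not claim.

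The paper's route is different and genuinely uses the extra hypothesis. One shows that $\mathfrak A$ (and likewise $\mathfrak B$) is $E(\widetilde{\N})$-equivalent to a direct sum $\mathfrak A_0\oplus\mathfrak A_1$ with $K_{1-i}(\mathfrak A_i)=0$, by splitting a length-$1$ projective resolution of $K_\ast(\mathfrak A)$ into its even and odd graded pieces, lifting each piece via $K_\ast^\dagger$, and completing to exact triangles. Once both $\mathfrak A$ and $\mathfrak B$ are so decomposed, the UCT sequence for each summand $E(\widetilde{\N};\mathfrak A_i,\mathfrak B_j)$ has either its $\Hom$ or its $\Ext$ term equal to zero, hence is trivially split; summing over $i,j$ gives the desired (unnatural) splitting. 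This is where the hypotheses on $\mathfrak B$ are actually consumed: you need $K_\ast(\mathfrak B)$ to have projective dimension at most $1$ in order to build $\mathfrak B_0\oplus\mathfrak B_1$.
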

\begin{proof}
It follows from Proposition~\ref{projdim1} that $K_\ast(\mathfrak A)$ has projective dimension less than 1, and thus the existence of such a UCT follows from Theorem~\ref{UCT}. By using the same method as when proving that the classical UCT is split \cite{UCT}, it suffices to show that $\mathfrak A$ (and similarly $\mathfrak B$) is $E(\widetilde{\N})$-equivalent to a direct sum $\mathfrak A_0 \oplus \mathfrak A_1$ where $K_{1-i}(\mathfrak A_i)=0$. Let $P^1 \to P^0 \to K_\ast(\mathfrak A)$ be a length 1 projective resolution. Since $P^i \cong (P^i_0,0) \oplus (0,P^i_1)$ naturally, we obtain two projective resolutions $\Sigma^i (P^1_i,0) \to \Sigma^i (P^0_i,0) \to \Sigma^i(K_i(\mathfrak A),0)$ for $i=0,1$. We may lift the maps $\Sigma^i (P^1_i,0) \to \Sigma^i (P^0_i,0)$ for $i=0,1$, by using $K_\ast^\dagger$, to morphisms $Q^1_i \to Q^0_i$ in $\mathfrak E(\widetilde{\N})$. Embed these in exact triangles $\Sigma \mathfrak A_i \to Q^1_i \to Q^0_i \to \mathfrak A_i$. By construction $K_\ast(\mathfrak A_0) = (K_0(\mathfrak A),0)$ and $K_\ast(\mathfrak A_1) = (0,K_1(\mathfrak A))$. Thus $K_\ast(\mathfrak A_0 \oplus \mathfrak A_1) \cong K_\ast(\mathfrak A)$ and since both $\mathfrak A_0 \oplus \mathfrak A_1$ and $\mathfrak A$ satisfy the above UCT, naturality of the UCT and the five lemma implies that the algebras are $E(\widetilde{\N})$-equivalent.
\end{proof}

\section{Classification using ideal-related $K$-theory and application for graph $C^{*}$-algebras}

In this section, we use the universal coefficient theorem established in Section~\ref{s:uct} (Corollary~\ref{c:uct}) and Theorem~\ref{t:class}, to prove a classification result using ideal-related $K$-theory for tight $C^{*}$-algebras over $\widetilde{\N}$ whose fibers are AF-algebras or Kirchberg algebras in $\mathcal{N}$ and the $K$-theory of the $\infty$ fiber is torsion-free.  We then apply our result to graph $C^{*}$-algebras with primitive ideal space $\widetilde{\N}$.  

\begin{theor}\label{t:class}
Let $\mathfrak{A}$ and $\mathfrak{B}$ be tight $C^{*}$-algebras over $\widetilde{\N}$.  Suppose for each $n \in \widetilde{\N}$, that $\mathfrak{A}(n)$ is an AF-algebra or a Kirchberg algebra in $\mathcal{N}$ and that $\mathfrak{B}(n)$ is an AF-algebra or a Kirchberg algebra in $\mathcal{N}$, and suppose that $K_{0} ( \mathfrak{A}(\infty) )$ and $K_{1} ( \mathfrak{A}(\infty) )$ are torsion-free abelian groups.

Suppose that there exists a $C( \widetilde{\N} , \Z )$-isomorphism $\ftn{ \gamma }{ K_{*} ( \mathfrak{A}  ) }{ K_{*}( \mathfrak{B}  ) }$ such that $K_{0} ( \gamma_{n} )$ is an order isomorphism for all $n \in \widetilde{\N}$.  
\begin{itemize}
\item[(1)]  If $\mathfrak{A}$ and $\mathfrak{B}$ are stable $C^{*}$-algebras, then there exists an $\widetilde{\N}$-equivariant $*$-isomorphism $\ftn{ \phi }{ \mathfrak{A}  }{ \mathfrak{B}  }$ such that the $C( \widetilde{\N} , \Z )$-isomorphisms $K_{*} ( \phi )$ and $K_{*} ( \gamma )$ are equal.

\item[(2)]  If $\mathfrak{A}$ and $\mathfrak{B}$ are unital $C^{*}$-algebras and $K_{0} ( \gamma ) ( [ 1_{ \mathfrak{A} } ] ) = [ 1_{ \mathfrak{B} } ]$, then there exists an $\widetilde{\N}$-equivariant $*$-isomorphism $\ftn{ \phi }{ \mathfrak{A}  }{ \mathfrak{B}  }$ such that the $C( \widetilde{\N} , \Z )$-isomorphisms $K_{*} ( \phi )$ and $K_{*} ( \gamma )$ are equal.
\end{itemize}
\end{theor}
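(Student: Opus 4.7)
The plan is to lift $\gamma$ to an $E(\widetilde{\N})$-equivalence $\widetilde{\gamma}$ using the universal coefficient theorem (Corollary~\ref{c:uct}), and then apply the preceding classification theorem (phrased in terms of ideal-related $K$-theory with coefficients) to $\underline{K}(\widetilde{\gamma})$.

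First, I would verify that $\mathfrak{A}, \mathfrak{B} \in \mathcal{B}_E(\widetilde{\N})$. For any open $U \subseteq \widetilde{\N}$, the algebra $\mathfrak{A}(U)$ is either a countable $c_0$-direct sum of fibers (when $U \subseteq \N$) or an extension of $\mathfrak{A}(\infty)$ by such a direct sum (when $\infty \in U$); since each fiber is AF or a Kirchberg algebra in $\mathcal{N}$, and $\mathcal{B}_E$ is closed under countable direct sums and extensions, we obtain $\mathfrak{A}(U) \in \mathcal{B}_E$, and similarly for $\mathfrak{B}$. Moreover $K_*(\mathfrak{B}(\infty))$ is torsion-free since $\gamma_\infty$ identifies it with $K_*(\mathfrak{A}(\infty))$, so the hypotheses of Corollary~\ref{c:uct} are met for both $\mathfrak{A}$ and $\mathfrak{B}$.

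Next, by Corollary~\ref{c:uct} applied to the pair $(\mathfrak{A},\mathfrak{B})$, lift $\gamma$ to an element $\widetilde{\gamma} \in E(\widetilde{\N}; \mathfrak{A}, \mathfrak{B})$ with $K_*(\widetilde{\gamma}) = \gamma$. To show $\widetilde{\gamma}$ is an $E(\widetilde{\N})$-equivalence, embed it in an exact triangle $\mathfrak{A} \to \mathfrak{B} \to C \to \Sigma\mathfrak{A}$ in the triangulated category $\mathfrak{E}(\widetilde{\N})$. Since $\mathcal{B}_E(\widetilde{\N})$ is a triangulated subcategory, $C \in \mathcal{B}_E(\widetilde{\N})$; the associated long exact sequence in $K$-theory forces $K_*(C) = 0$, so in particular $K_*(C(\infty))$ is trivially torsion-free, and Corollary~\ref{c:uct} applied to the pair $(C,C)$ gives $E(\widetilde{\N}; C, C) = 0$. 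Hence $\id_C = 0$ in $\mathfrak{E}(\widetilde{\N})$, so $C$ is a zero object and $\widetilde{\gamma}$ is invertible in $E(\widetilde{\N})$.

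Finally, functoriality of $\underline{K}$ turns $\widetilde{\gamma}$ into a $C(\widetilde{\N}, \Lambda)$-isomorphism $\underline{K}(\widetilde{\gamma}): \underline{K}(\mathfrak{A}) \to \underline{K}(\mathfrak{B})$, whose fibrewise $K_0$-component $K_0(\widetilde{\gamma}_n) = K_0(\gamma_n)$ is an order isomorphism by hypothesis; in the unital case the unit-preservation hypothesis transfers verbatim from $\gamma$ to $\underline{K}(\widetilde{\gamma})$. Applying the preceding classification theorem (the one using ideal-related $K$-theory with coefficients) now produces an $\widetilde{\N}$-equivariant $*$-isomorphism $\phi \colon \mathfrak{A} \to \mathfrak{B}$ with $\underline{K}(\phi) = \underline{K}(\widetilde{\gamma})$, and in particular $K_*(\phi) = \gamma$. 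The principal obstacle is the invertibility step: verifying that $\mathcal{B}_E(\widetilde{\N})$ is closed under cones (so $C$ stays in the bootstrap class) and that the UCT of Corollary~\ref{c:uct} can be applied to $(C,C)$. The latter is automatic because $K_*(C) = 0$ makes the torsion-freeness condition on $C(\infty)$ vacuous, but both facts are formal consequences of the Meyer--Nest triangulated-category framework.
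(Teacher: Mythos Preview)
Your proof is correct and follows the same approach as the paper: lift $\gamma$ via the UCT of Corollary~\ref{c:uct} to an $E(\widetilde{\N})$-equivalence, pass to a $C(\widetilde{\N},\Lambda)$-isomorphism on $\underline{K}$, and invoke the earlier classification theorem. The paper compresses this into a single sentence, leaving the invertibility of the lift implicit (it follows from naturality of the UCT and the five lemma, as alluded to in the proof of Corollary~\ref{c:uct}), whereas you spell it out via a cone argument; the two are equivalent standard manoeuvres.
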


\begin{proof}
By Corollary~\ref{c:uct}, there exists a $C( \widetilde{ \N } , \Lambda )$-isomorphism $\ftn{ \widetilde{ \gamma } }{ \underline{K} ( \mathfrak{A}  ) }{ \underline{K} ( \mathfrak{B} ) }$ lifting $\gamma$.  The theorem now follows from Theorem~\ref{t:class}. 
\end{proof}

In every known classification theorem for graph $C^{*}$-algebras, it is always assumed that there are finitely many gauge-invariant ideals.  It turns out that we may use Theorem~\ref{t:class} to classify graph $C^{*}$-algebras with infinitely many gauge-invariant ideals.  This is due to the fact that graph $C^{*}$-algebras that are tight over $\widetilde{\N}$ has a special form covered by Theorem~\ref{t:class}.  This was proved in \cite{jg:t1graph}.  For the definition of graph $C^{*}$-algebras, see \cite{FowlerLacaRaeburn:PAMS00}.

\begin{theor}\label{t:classgraph}
Let $\mathfrak{A}$ and $\mathfrak{B}$ be graph $C^{*}$-algebras that are tight $C^{*}$-algebras over $\widetilde{\N}$.  

\begin{itemize}
\item[(1)] If there exists a $C( \widetilde{\N} , \Lambda )$-isomorphism $\ftn{ \gamma }{ \underline{K} ( \mathfrak{A} \otimes \K  ) }{ \underline{K} ( \mathfrak{B} \otimes \K ) }$ such that $K_{0} ( \gamma_{n} )$ is an order isomorphism for each $n \in \widetilde{\N}$, then there exists an $\widetilde{\N}$-equivariant $*$-isomorphism $\ftn{ \phi }{ \mathfrak{A} \otimes \K  }{ \mathfrak{B} \otimes \K }$ such that the $C( \widetilde{\N} , \Lambda )$-isomorphisms $\underline{K} ( \phi )$ and $ \gamma $ are equal.

\item[(2)] If there exists a $C( \widetilde{\N} , \Z )$-isomorphism $\ftn{ \gamma }{ K_{*} ( \mathfrak{A} \otimes \K  ) }{ K_{*} ( \mathfrak{B} \otimes \K ) }$ such that $K_{0} ( \gamma_{n} )$ is an order isomorphism for each $n \in \widetilde{\N}$, then there exists an $\widetilde{\N}$-equivariant $*$-isomorphism $\ftn{ \phi }{ \mathfrak{A} \otimes \K }{ \mathfrak{B} \otimes \K }$ such that the $C( \widetilde{\N} , \Z )$-isomorphisms $K_{*}( \phi )$ and $ \gamma$ are equal.
\end{itemize}
\end{theor}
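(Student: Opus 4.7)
The plan is to reduce this theorem to the two classification results proved earlier in the paper (the one in Section~5 using $\underline{K}$, and the one in Section~6 using $K_*$), by invoking the structural analysis of graph $C^*$-algebras with $T_1$ primitive ideal space carried out in \cite{jg:t1graph}. First I would appeal to \cite{jg:t1graph} to extract the fiber structure: for a graph $C^*$-algebra $\mathfrak{A}$ that is tight over $\widetilde{\N}$, each fiber $\mathfrak{A}(n)$ is a simple graph $C^*$-algebra. Since every simple graph $C^*$-algebra is either AF or purely infinite, and every graph $C^*$-algebra is nuclear and lies in the bootstrap class $\mathcal{N}$, each fiber is either an AF-algebra or a Kirchberg algebra in $\mathcal{N}$. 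The same holds for $\mathfrak{B}$. This verifies the fiber hypothesis shared by both classification theorems above.

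For part (1), the stabilizations $\mathfrak{A}\otimes \K$ and $\mathfrak{B}\otimes \K$ remain tight $C^*$-algebras over $\widetilde{\N}$, are stable, and have the same (stabilized) fiber structure, namely AF-algebras or Kirchberg algebras in $\mathcal{N}$. The classification theorem of Section~5 applied to the given $C(\widetilde{\N},\Lambda)$-isomorphism $\gamma$ then yields the desired $\widetilde{\N}$-equivariant $*$-isomorphism $\phi \colon \mathfrak{A}\otimes \K \to \mathfrak{B}\otimes \K$ with $\underline{K}(\phi) = \gamma$.

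For part (2), the classification theorem of Section~6 requires in addition that $K_*(\mathfrak{A}(\infty))$ be torsion-free. This should be extracted from the explicit description of the $\infty$-fiber of a tight $\widetilde{\N}$-graph $C^*$-algebra given in \cite{jg:t1graph}: one expects that in this setting the $\infty$-fiber is forced to be an AF-algebra, as it is the simple subquotient at the unique accumulation point of the primitive ideal space and arises as a limit of the simple fibers over $\N$. For an AF-algebra one has $K_1 = 0$ and $K_0$ is a dimension group, so both $K$-groups are automatically torsion-free. Once this torsion-freeness is established, the classification theorem of Section~6 applied to $\mathfrak{A}\otimes \K$ and $\mathfrak{B}\otimes \K$ lifts the $C(\widetilde{\N},\Z)$-isomorphism $\gamma$ to the required $\widetilde{\N}$-equivariant $*$-isomorphism inducing $\gamma$ on $K_*$.

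The principal point requiring careful justification is thus the structural input from \cite{jg:t1graph}, namely the identification of the fibers as AF-algebras or Kirchberg algebras in $\mathcal{N}$ and, for part (2), the description of the $\infty$-fiber that ensures torsion-freeness of its $K$-theory. With that input in hand, both (1) and (2) are immediate applications of the classification machinery developed earlier in the paper.
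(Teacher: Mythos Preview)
Your proposal is correct and follows essentially the same route as the paper's proof: verify via the structure theory of graph $C^*$-algebras that each fiber is either AF or a Kirchberg algebra in $\mathcal{N}$, invoke \cite{jg:t1graph} to see that the $\infty$-fiber is in fact an AF-algebra (hence has torsion-free $K$-theory), and then apply the two classification theorems from Sections~5 and~7 to the stabilizations. The only minor remark is that your heuristic ``arises as a limit of the simple fibers over $\N$'' is not the actual mechanism behind the AF conclusion for $\mathfrak{A}(\infty)$; the paper simply cites \cite[Remark~4]{jg:t1graph} for this, and you are right to defer to that reference rather than argue it directly.
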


\begin{proof}
Since $\mathfrak{A}$ is a graph $C^{*}$-algebra and $\mathfrak{A}(n)$ is simple, we have that $\mathfrak{A}(n)$ is either an AF-algebra or a Kirchberg algebra in $\mathcal{N}$ for all $n \in \N$.  By \cite[Remark~4]{jg:t1graph}, we have that $\mathfrak{A}(\infty)$ is an AF-algebra.  Similarly, $\mathfrak{B}(n)$ is either an AF-algebra or a Kirchberg algebra in $\mathcal{N}$ for all $n \in \N$ and $\mathfrak{B} ( \infty )$ is an AF-algebra.  (1) now follows from Theorem~\ref{t:class} and (2) follows from Theorem~\ref{t:class}.
\end{proof}

If $\mathfrak{A}$ is a graph $C^{*}$-algebra that is a tight $C^{*}$-algebra over $\widetilde{\N}$, then it is necessarily non-unital since every unital graph $C^{*}$-algebra with real rank zero has finitely many ideals.  Theorem~\ref{t:classgraph} gives a strong stable classification theorem.  The question of what additional information is needed to get a strong classification theorem for non-stable graph $C^{*}$-algebras that are tight $C^{*}$-algebras over $\widetilde{\N}$ remains open.

\section*{Acknowledgement}
This work was supported by the Danish National Research Foundation through the Centre for Symmetry and Deformation (DNRF92) and by a grant from the Simons Foundation (\#279369 to Efren Ruiz).
The first author would like to express his gratitude to the Department of Mathematics at University of Hawaii, Hilo for their kindness during his visit in Spring 2013 where most of this work was carried out.  The second author also want to thank the Department of Mathematical Sciences at University of Copenhagen for their hospitality during his visit in Fall 2013.

\def\cprime{$'$}

\end{document}